\documentclass{commat}

\usepackage[stable]{footmisc}
\usepackage{mathrsfs}

\DeclareMathOperator\ad{ad}
\DeclareMathOperator\Ad{Ad}
\DeclareMathOperator\AD{AD}
\DeclareMathOperator\Cong{Cong}
\DeclareMathOperator\Diag{Diag}
\DeclareMathOperator\End{End}
\DeclareMathOperator\tr{tr}
\DeclareMathOperator\wk{weak}

\newcommand\Acal{\mathcal{A}}
\newcommand\be{\begin{equation}}
\newcommand\bpmtx{\begin{pmatrix}}
\newcommand\Ccal{\mathcal{C}}
\newcommand\Dref{Definition~\ref}
\newcommand\ee{\end{equation}}
\newcommand\epmtx{\end{pmatrix}}
\newcommand\Eref{Example~\ref}
\newcommand\eroman{\etype{\roman}}
\newcommand\Lcal{\mathcal{L}}
\newcommand\Lie{reversible Lie }
\newcommand\lra{\longrightarrow}
\newcommand\Lref{Lemma~\ref}
\newcommand\mcA{\mathcal{A}}
\newcommand\mcI{\mathcal{I}}
\newcommand\mcL{\mathcal{L}}
\newcommand\mcM{\mathcal{M}}
\newcommand\mcN{\mathcal{N}}
\newcommand\NN{\mathbb{N}}
\newcommand\QQ{\mathbb{Q}}
\newcommand\Rcal{\mathcal{R}}
\newcommand\Rref{Remark~\ref}
\newcommand\RR{\mathbb{R}}

\newcommand\Ucal{\mathcal{U}}
\newcommand\vep{\varepsilon}
\newcommand\weakLie{quasi Lie}
\newcommand{\etype}[1]{\renewcommand{\labelenumi}{(#1{enumi})}}
\newcommand{\one}{1}
\newcommand{\tT}{\mathcal{T}}
\newcommand{\zero}{0}
\newcommand{\Z}{\mathbb{Z}}

\title{%
   Lie pairs
    }

\author{%
    Letterio Gatto and Louis Rowen
    }

\authorinfo[L. Gatto]{Dipartimento di Scienze Matematiche Politecnico di
Torino, Italy}{letterio.gatto@polito.it}

\authorinfo[L. Rowen]{Bar-Ilan University, Israel}{rowen@math.biu.ac.il}

\abstract{%
    Extending the theory of systems, we   introduce a theory of Lie
semialgebra ``pairs'' which parallels the classical theory of Lie algebras, but with a ``null set'' replacing $0$. A selection of examples is given. These Lie pairs comprise two
categories in addition to the universal algebraic definition, one with ``weak Lie morphisms'' preserving null sums, and the other with ``$\preceq$-morphisms''  preserving   a surpassing relation $\preceq$ that replaces equality.  We provide versions of the PBW (Poincar\'{e}-Birkhoff-Witt) Theorem in these categories.
    }

\keywords{%
    bracket, Lie, pairs, pre-negation map, PBW, surpassing relation, involution, semialgebra, cross product, Filiform, Krasner
    }

\msc{%
    Primary  17B99; Secondary 	16S30,  16Y60, 15A80, 17B35
    }

\VOLUME{32}
\YEAR{2024}
\NUMBER{2}
\firstpage{71}
\DOI{https://doi.org/10.46298/cm.12413}

\begin{document}

\tableofcontents

\section{Introduction}
The purpose of this paper is to take a further step  towards   a general flexible framework for a unified  treatment of classical algebraic structures together with those arising in a tropical  context, where typically one cannot rely on the existence of an additive inverse (e.g., as in the celebrated max-plus algebra). The present research includes    Lie algebras in this general picture, in the sense that we are about to explain. In other words, it may be considered as one more stage of a wider program initiated
some years ago by the second author, through the theory of {\em triples} and {\em systems} (see e.g. \cite{Row16,Row17}), which has already proved  successful in
revisiting classical algebraic phenomena by embedding them in a tropical context. Among its applications, we recall the construction of an
effective tropical substitute of the exterior algebra, along with a natural extension of the
Cayley-Hamilton  theorem \cite{GaR} for endomorphisms of modules over semialgebras.

The simple but effective idea for remedying the lack
of negation is to introduce an endomorphism $(-)$, whose square is the
identity,  to which is attached a {\em surpassing relation}. A further step was taken  in \cite{ChaGaRo}, where a theory of Clifford semialgebra is proposed. In more traditional contexts, Clifford algebras are examples of Lie super-algebras, so
\cite{ChaGaRo} may be considered as the
first relevant example  of Lie semi-(super)algebras obtained within the
already collocated framework of triples and systems. It was applied to extend to the tropical framework  the  polynomial representation of Lie algebras of endomorphisms of a vector space, in the same spirit of \cite{DJKM}.

Meanwhile, a theory of semialgebra pairs has been  developed  in \cite{AGR2,JMR}, with the aim of exploiting, through their axiomatization, the formal properties enjoyed by the surpassing
relation associated to a negation map,  and eventually expunging the latter. For the
reader's convenience, we recall here that many of the classical key properties of
several  algebraic structures in traditional frameworks are recovered by the formalism of {\em systems}, in which
equality is replaced by the surpassing relation.

This  premise should make clear that there is ample motivation to cope with
the more tricky situation provided by the tropical version of Lie algebras; moreover, in
view of \cite{JMR}, it is natural to investigate and to set the foundation of a theory
of Lie pairs, generalizing Lie (semi)-algebras. These are  pairs $(\Lcal, \Lcal_0)$ of modules over some commutative semiring
$C$, endowed with a product $[\,\,]:\Lcal\times\Lcal\lra \Lcal$, $
(x,y)\mapsto [xy]$, satisfying suitable properties inspired by the classical Lie theory, and for which the skew-symmetric and ``Jacobi identity'' features of the
theory are all subsumed in the submodule $\Lcal_0$ of $\Lcal$, which basically contains all of the
relevant relations.

To show the reader quickly what we are
talking about, the Lie  bracket $[\,\,]$ satisfies the property $[xx]\in \Lcal_0$ for all
$x\in\Lcal$. Moreover, one naturally requires $[xy]+[yx]$ to lie in $\Lcal_0$, regardless
of the choice of $(x,y)\in \Lcal\times\Lcal$. We thereby define, in  case of free
modules over a base semiring, the structure constants of a Lie pair. The attractiveness of the theory comes from the freedom in defining $\Lcal_0$ as the basket containing all the undesired  appurtenances (due to
skew-symmetry or the Jacobi relation) occurring in the formal manipulations, which enhance the ability to construct families of examples of Lie pairs.
 It is  also important to stress
that the proposed axiomatization is natural, and one recovers the Lie semialgebras in
the sense that when $\Lcal$ is a module over a commutative ring and
when $\Lcal_0=\{0\}$ one obtains the classical definition of Lie algebras, and all of our examples work in this case, and reproduce the classical ones, like, e.g., the cross product.

Although our take is more along traditional structural algebraic lines,
following Jacobson~\cite{Jac} and Humphreys~\cite{H}, but relying on the subset $\Lcal_0$
taking the place of $\zero,$
it should be remarked that the literature has already seen research aimed to build theories of Lie semialgebras, for instance in the work
%
%
by
Hilgert and Hofmann \cite{HH}, relying on the Campbell-Hausdorff formula.

As remarked, this theory of
``pairs'' is an
outgrowth of   ``triples''  and
``systems,'' cf.~\cite{GaR, AGR2, JuR1, Row16, Row17}, which have unified classical algebraic
theory with tropical theory and other examples
including hyperfields, as explained in~\cite{AGR1}. Pairs are used in linear algebra
in \cite{AGR2}, and in generalizing commutative algebra theory in~\cite{AGR2}.
But whereas the
set $\tT$ of ``tangible'' elements  (that is the elements of the ground set)  played a crucial role
in  semiring and hyperring systems, in this study of ``Lie pairs'' we do not
deal with tangible elements at all. In other words, $\tT = \emptyset.$

We  bring in  a ``surpassing relation'' in \S\ref{ddi}, to be preserved by ``$\preceq$-morphisms'' in its appropriate category.
There are three possible categories, corresponding to the three versions of morphisms given in
\Dref{symsyst} and \Dref{precmor}. The ``weak  morphisms'' and ``$\preceq$-morphisms'' are inspired by the theory of
hyperfields, cf.~\cite{Vir}.

Among the main thrusts of this paper is to lay out the categorical foundations of Lie pairs in
\Dref{LieD}, paying attention to examples inspired by the classical theory, obtaining categories
parallel to \cite{AGR1,AGR2}.     At times  negation can be replaced by a ``pre-weak negation map'' $\psi   $ satisfying $x+\psi( x)\in \mcL_0$,  cf.~Theorem~\ref{pLie}. We also
introduce pre-Lie $\vep$-pairs, the  analog of pre-Lie algebras, in \Dref{ass}, and show how to
obtain a Lie pair from a pre-Lie $\vep$-pairs in Theorem~\ref{preLir}.
The Lie versions of morphisms are given in  \Dref{morp}.  It might seem strange that there are three different versions of Lie morphisms, but this also happens in other non-classical algebraic theories such as hyperfields \cite{Vir}.
Our main  category uses ``weak Lie morphisms,''  with many natural examples provided along
the way.

 We extend   major examples from classical Lie theory, to be described shortly.  On the other hand, there is a Lie
 version of Krasner's hyperfield construction of \cite{krasner}, given in~\S\ref{Kras0}.

 To test the viability of these notions, we prove versions of the PBW (Poincare-Birkhoff-Witt) Theorem in these three categories (Theorems~\ref{PBW0},  ~\ref{thm:PBW}, and \ref{PBW3}).

\subsection{Shape of the paper}
To help the reader to get oriented in the exposition of so many new, though natural, notions, we now give   a glimpse of how the paper is organized, also to share the feeling of what is in it.

To ease the reading, and to make the paper as self contained as possible, we collect in section \ref{prelim} all the prerequisites and notation to be used  in the article. The framework is very general, which explains why we put so much emphasis on very sparse algebraic structures like magmas and bimagmas. Pairs and negation maps are quickly recalled in Section \ref{subsec:pairs} and \ref{negmap}.   Weak Property N, introduced in Section \ref{subsec:N}, is necessary because we cannot expect, as  easily seen by basic examples, for nontrivial negation maps to exist.

The theoretical core of the paper is Section \ref{LieD0}, where we collect foundational material about the theory of Lie pairs in our sense, the basic morphisms used in the rest of the paper.

To show   that our theory is not empty we devote Section \ref{subsec:MLC}  to major Lie constructions (such as a Lie pair from an associative pair in Theorem~\ref{pLie}, and  from an associative pair with involution in Theorems~\ref{Linv} and \ref{Linv1}) and examples (the classical constructions of Theorem~\ref{clas1}, and low dimensional examples in \S\ref{subsec:ME} including the cross product), as well as Filiform pairs in \S\ref{fili} and an example motivated by hyperfield theory in Theorem~\ref{Kras}.

One standard technique for working with semialgebras,   to cope with the unavailability of additive inverses, is that of doubling, which in a sense recalls the construction of the integers from the natural numbers, but where we avoid   taking the quotient modulo a congruence.

Among the most natural examples of Lie pairs,   is  one where the Lie bracket is obtained as the Lie commutator in an associative semialgebra. The construction is straightforward. The standard model of any associative (semi)-algebra is that of a quotient of the tensor (semi)-algebra associated to a module. This is why in Section \ref{sec:tsflp}  we  treat   tensor semialgebras of free Lie pairs.

In \S\ref{PBW} we address the natural question: given any Lie pair $(\Lcal, \Lcal_0)$, can we construct an associative pair $(\Acal, \Acal_0)$ in which  $(\Lcal,\Lcal_0)$ is embedded, in such a way that the commutator restricts to the given Lie bracket? This would be the extension of the Poincar\'e--Birkhoff--Witt (PBW) theorem in our context. In our concluding subsection \ref{subsec:PBWy} we analyze the corresponding PBW situation in the various versions of Lie pairs. We shall see that the construction is unambiguous for each of the  versions considered, although it must take into account the corresponding category.

    \section{Preliminaries and Notation}\label{prelim}

First we review some definitions from \cite{JMR}.
As usual we denote as $\NN$  the semiring of  natural numbers
(including $0$), and $\NN^+:= \NN \setminus \{0\}.$
\begin{definition}\label{basicdefs}$ $
\begin{enumerate}\eroman
\item
A \textbf{magma} is a set  with a binary operation denoted $(+)$
(addition) or $(\cdot)$ (multiplication). At times we also require a
neutral element, written as $\zero$ or $\one$ respectively.

A \textbf{semigroup} is a magma   whose given binary operation
satisfies the law of associativity.

A \textbf{bimagma} $\mathcal A$ is a multiplicative monoid $(\mathcal A,\cdot,\one)$ which also is an additive semigroup $(\mathcal A,+,\zero)$, satisfying
$\zero b = b \zero = \zero$ for all $b \in \mathcal A$. (Thus, for us, bimagmas are associative both for multiplication and addition.)

A \textbf{d-bimagma} is a bimagma which is \textbf{distributive}, by which we mean $$\left(\sum _i x_i\right)\left(\sum _j y_j\right) = \sum _{i,j} x_i y_j,\quad \mbox{for all} \ x_i,y_j \in \mcA. $$

A \textbf{semiring}
(cf.~\cite{Cos},\cite{golan92}) $(\mathcal A, +, \cdot, \zero, \one)$ is  a (multiplicatively) associative d-bimagma also with a multiplicative identity $\one.$ A \textbf{semifield} is a
semiring in which every nonzero element is invertible.

\item $\Ccal$ always will denote a commutative semiring, e.g.~$\NN$ or $\QQ_{+}$ or the max-plus algebra.  Often $\Ccal$ will be a semifield.

\item
A (left) $\mcA$-\textbf{module}\footnote{For convenience, we are defining modules with $\zero$, but since we do not require negation, the $\zero$ element could be dispensed with.} over a semigroup  $C$
 is a semigroup $( \mathcal M,+,\zero)$  endowed with scalar
multiplication $C\times \mathcal M \to \mathcal M$ satisfying the
following axioms, for all $c, c_i\in C$ and $y, y_i \in \mathcal M$:

\begin{enumerate}\eroman
\item
$c \zero=\zero c = \zero,$ (i.e., $\zero$ is absorbing).
\item $c\sum y_i = \sum cy_i,$ $(\sum c_i)y = \sum (c_iy).$
\item (when $C$ is a semiring) $(c_1)(c_2y) = (c_1c_2)y.$
\end{enumerate}

 A \textbf{basis} of an $C$-module $\mcM$ (if it exists) is a set $\{x_i : i \in I\} \subseteq \mcM$ such that any element of $A$ can be written uniquely as a sum $\sum c_i x_i,$ $c_i \in C,$ where almost all $c_i = 0.$
In this case we call $\mcM$ a \textbf{free} $\mcA$-module
of rank $|I|$.

\item   A $\Ccal$-\textbf{bimagma} is  a $\Ccal$-module which is also a bimagma and  satisfies
$$(cy_1)y_2 = c(y_1 y_2) = y_1 (cy_2)$$
for all $c \in \Ccal,$ $y_i \in \mcA.$. A $\Ccal$-\textbf{bimagma ideal} of a $\Ccal$-{bimagma} $\mcA$ is a sub-bimagma which is also an $\mcA$-module.

\item  A \textbf{multiplicative ideal} of a bimagma $\mcA$ is a subset $\mcI \subseteq \mcA$ satisfying $bd,db \in \mcI $, for each $b\in \mcI $ and $d\in \mcA.$

  An \textbf{ideal}  is a sub-semigroup $\mcI \subseteq \mcA$ which is also a multiplicative ideal.

\item  An \textbf{involution} of a $\Ccal$-bimagma $\mcA$ is an anti-automorphism $(*): \mcA\to \mcA$ of order $\le~2,$ i.e.,
$(cb)^* = cb^*$, $(\sum b_i)^* = \sum b_i^*$, $(b^*)^*= b,$ and $(b_1 b_2)^* = b_2^* b_1^*$ for $b, b_i \in \mcA.$
 (We have defined an involution of the first kind.)

\item  A \textbf{semialgebra} over
 $\Ccal$ is a $\Ccal$-bimagma that is
also a semiring.

  \item A map $f:M\to N$ of $\Ccal$-modules is \textbf{module multiplicative} if $f(cy) = c f(y),$ for all $c\in \Ccal,$ $y\in M.$

 \item  Module homomorphisms  are defined as usual. For a
$\Ccal$-module $\mcM$,  the semialgebra of
module homomorphisms $\mcM \to \mcM$ is
denoted as~$\End_{\Ccal} \mcM$. For
notational convenience, we omit the subscript $\Ccal$ when it is understood, and designate $\zero_\mcM \in \End
\mcM$ for the $0$ homomorphism, i.e., $\zero_\mcM(v) = \zero, \ $ for all $v\in \mcM.$ 
\end{enumerate}
\end{definition}

\begin{remark}\label{addingC}
    Any semiring $\mcA$ is a semialgebra over its \textbf{center} $C = Z(\mcA).$
\end{remark}

\subsection{Pairs}\label{subsec:pairs}

 \begin{definition}\label{symsyst} $ $
 \begin{enumerate}\eroman

  \item   A  \textbf{$\Ccal$-pair}  $(\mcA,\mcA_0)$ is a   $\Ccal$-module $\mcA$ with a $\Ccal$-subset $\mcA_0$.
 In particular, if $\Ccal_0 \subset \Ccal$ we have the pair $(\Ccal,\Ccal_0)$,
 which we call the \textbf{base pair}.

  \item   A map $f: (\mcA,\mcA_0)\to  (\mcA',\mcA'_0)$ of pairs is:  \begin{enumerate}\eroman
  \item a \textbf{homomorphism}
if  $f(b_1+b_2)= f(b_1)+f(b_2),$ for all $b_1,b_2\in \mcA .$
\item a  \textbf{weak morphism} if $\sum f(b_i)\in \mcA'_0$
whenever $\sum b_i\in \mcA_0$;  $f$ is
$\mcA_0$-\textbf{injective}   if $\sum f(b_i)\in \mcA'_0$ implies  $\sum b_i\in \mcA_0$.

 \end{enumerate}

 \item   A  \textbf{$(\Ccal,\Ccal_0)$-pair} is a  $\Ccal$-pair $(\mcA,\mcA_0)$  for which $\Ccal_0 \mcA \subseteq \mcA_0$.

 \end{enumerate}
 \end{definition}

\begin{importantnote}
 Any   $\Ccal$-{pair} is automatically a  $(\Ccal,0)$-pair.
 We will identify~$\Ccal$ with $(\Ccal,\zero)$ when appropriate.

 $(\Ccal,\Ccal_0)$ is presumed given, and ``pair'' means $(\Ccal,\Ccal_0)$-pair.
 The justification for this approach is given in
\cite[Note~1.34]{Row16} and \cite{AGR2}.

 Intuitively $\mcA_0$ replaces ``zero.''
 Often $\mcA_0$ is a bimagma ideal of $\mcA.$

 The essential difference with \cite{JMR} and \cite{AGR2} is that here we do not assume that $\Ccal\subseteq \mcA$, and ``tangible elements'' do not play a role here.

 Occasionally we will merely be given a semiring $\mcA$ and an ideal $\mcA_0.$ Then $(\mcA,\mcA_0)$ becomes a pair when we define $C$ as in Remark~\ref{addingC}, and $C_0 = C\cap \mcA_0$.
\end{importantnote}

\begin{definition}\label{basic2}$ $
    \begin{enumerate}\eroman
\item
  A \textbf{bimagma pair}  is a  pair  $(\Acal,\Acal_0)$ for which  $  \mcA$  is a bimagma,
  satisfying $\sum b_i \in \mcA_0$ implies $\sum b b_i \in \mcA_0$.

     \item
  A bimagma pair $(\mcA,\mcA_0)$ is  $\mcA_0$-\textbf{additive} if $\mcA_0$ is an ideal of~$\mcA$.

  \item A \textbf{semiring
 pair} is a bimagma pair, for which $\mcA$ is a semiring.

   \item An $\vep$-\textbf{pair}, for $\vep \in \Ccal,$ is an $\mcA_0$-additive bimagma pair $(\mcA,\mcA_0)$, for which $$xy +\vep yx\in \mcA_0, \ \mbox{for all} \ x,y \in \mcA.$$

\item
 Given a bimagma pair $(\mcA,\mcA_0)$, an \textbf{involution} of $(\mcA,\mcA_0)$ is an involution $(*)$ of $\mcA$ such that  $\mcA_0^* = \mcA_0.$

 \item    A  {pair} $(\mcA,\mcA_0)$ is  \textbf{$\mcA_0$-cancellative} if, for $y\in \mcA,$   $c\in \Ccal,$ $cy \in \mcA_0$ implies $c\in \mcA_0$ or $y\in \mcA_0. $

\item    A  {pair} $(\mcA,\mcA_0)$ satisfies \textbf{$\mcA_0$-elimination} if  $y_0+y_1 \in \mcA_0$ for $y_0\in \mcA_0,$ $y_1\in \mcA,$  implies $y_1\in \mcA_0 $.

   \item A \textbf{homomorphism} of bimagma
pairs $\varphi: (\mcA,\mcA_0)\to (\mcA',\mcA_0')$ is a bimagma homomorphism
 $\varphi: \mcA \to \mcA'$ (i.e., which preserves addition and multiplication), with $\varphi(\mcA_0) \subseteq \mcA_0'$.

\item
  A  (left) \textbf{module pair} $(\mcM,\mcM_0)$ over a pair $(\mcA,\mcA_0)$  is an $\mcA$-module $\mcM$
 together with a subset $\mcM_0$ and a bilinear product $\mcA \times \mcM \to \mcM$ satisfying the following properties for all $b\in \mcA,\ y \in \mcM$:
 \begin{enumerate}
 \item $ \zero y = \zero_\mcM = y\zero,  $
     \item $b\zero_\mcM = \zero_\mcM , $
     \item $b \mcM_0 \subseteq \mcM_0,  $
     \item $ \mcA_0y \subseteq \mcM_0.  $
 \end{enumerate}

     \item
 If $(\mcM, \mcM_0)$ is a  pair, then define $$ \End \mcM _0 = \{f\in \End \mcM: f(\mcM)\subseteq \mcM_0\},$$ and
take $\End (\mcM, \mcM_0)$ to be the pair    $(\End \mcM, \End \mcM_0)$.

  \item A \textbf{sub-pair} of a pair  $(\mcA,\mcA_0)$ is a pair
  $(S,S_0)$ where $S\subseteq\mcA$ and $S_0 \subseteq S \cap \mcA_0.$

    \end{enumerate}
\end{definition}

 \subsection{Substitutes for negation}

 Although we have bypassed negation, we need some  versions to carry out the theory.

\subsubsection{Pre-negation and negation maps} \label{sub:negation}

\begin{definition}[\hspace{1sp}\cite{JMR}]\label{pnm}
A \textbf{pre-negation map} on  a bimagma pair  $(\mcA,\mcA_0)$
is a semigroup endomorphism  $b\mapsto \psi (b)$ of $\mcA$, satisfying the following conditions,  for all $ b,b_1\in \mcA$:
\begin{enumerate}\eroman
\item
$  \psi (bb') = b \psi (b')= \psi (b)b',$
\item $b+ \psi(b) \in \mcA_0 $ for all $b\in \mcA$,
\item $ \psi (\mcA_0) \subseteq \mcA_0. $
\end{enumerate}
\end{definition}

  \begin{definition}[\hspace{1sp}\cite{JMR}]\label{negmap}$ $
 \begin{enumerate}
     \item  A \textbf{negation map} on  a bimagma pair is a pre-negation map, denoted $(-) $, on $(\mcA,\mcA_0)$ of order $\le
2$, i.e., satisfying $(-)((-)b)=b$  for all $b\in \mcA.$

\item  We write $b(-)b'$ for $b +(-)b'.$ 

 \item A $\psi$-\textbf{pair} is a bimagma pair with a pre-negation map $\psi.$
 \end{enumerate}
\end{definition}

\subsubsection{Weak Property N}\label{subsec:N}

We   avoid negation maps, and instead use the following   generalization.

\begin{definition}\label{propN0}$ $
A pair $(\mcA,\mcA_0)$ satisfies
 \textbf{Weak Property N}  if
  for each $b\in \mcA$ there is an element  $b'$ such that $b +b' = b' +b\in \mcA_0$.  

\end{definition}

(We used ``Weak'' here to be consistent with the terminology of \cite{AGR2}. It has nothing to do with ``weak morphisms,'' to be defined below.)
The following very easy example is illustrative.

\begin{example}\label{vep} For  any $\Ccal$-bimagma $\Acal$,
picking any element $\vep$ in $\Ccal$ for which $\one +\vep \in C_0,$ the map $b\mapsto \vep b$ is a pre-negation map $\psi$ of    $(\Acal,(\one +\vep)\Acal)$, which is a $\psi$-pair satisfying Weak Property $N,$ since $b +\vep b = (\one +\vep) b\in \mcA_0.$
\end{example}

 \begin{importantnote} When $\Ccal$ has a negation map, we can take $\vep =(-)\one$ in Example~\ref{vep}.
 In general, the element $\vep$ is a more general version of $(-)\one,$ since we do not require $\vep^2 = \one,$ but nevertheless  $\one +\vep$ replaces $\zero.$
 \end{importantnote}

\subsection{Surpassing relations}

 \begin{definition}\label{ddi}$ $
  A \textbf{surpassing relation} $\preceq$ on a
bimagma  pair  $(\mcA,\mcA_0)$ is a pre-order satisfying the following
conditions for all $b_i,b_i'\in \mathcal A$, $c\in \Ccal$:

  \begin{enumerate}
 \eroman
  \item If $b_1 \preceq b_2$ and $b_1' \preceq b_2'$   then  $b_1 + b_1' \preceq b_2
   + b_2'$ and $b_1  b_1' \preceq b_2
   b_2'.$

      \item If $b_1 + b_0  = b_1'$ for some $b_0\in \mathcal A_0$, then $b_1 \preceq b_1'$.

 \item   If  $c \in \Ccal$ and $b_1 \preceq b_1'$ then $cb_1 \preceq cb_1'.$

      \item When $\mcA$ has a given  negation map $(-)$, if $b_1 \preceq b_1'$ then $(-)b_1\preceq (-)b_1'.$
     \end{enumerate}\end{definition}
 We also write $b_1 \succeq b_2$ to denote that $b_2\preceq b_1.$

  \begin{example}$ $
  \begin{itemize}
      \item
   Our main example in this  paper of a
surpassing relation on a pair $(\mcA,\mcA_0)$, denoted $\preceq_0,$ is given by $b_1\preceq_0 b_2$ iff $b_2 = b_1 +z$ for some $z\in \mcA_0.$
Then
$\mcA_0 =  \{ b\in \mcA: \zero\preceq_0 b \}. $ When $\mathcal{A}_0=0$, the relation $\preceq$ becomes equality.

  \item For $\psi$-pairs, we write $\preceq_\psi$ for $\preceq_0$; i.e.,  $b_1\preceq_0 b_2$ iff $b_2 = b_1 +(b+\psi(b))$ for some $b\in \mcA.$

\item Another example, motivated by hypergroup theory, to be used in Theorem~\ref{Kras}:   Let $ \mathcal P(\mathcal H )$ denote the power set of a set $\mathcal H$. We say that $S_1 \preceq_\subseteq S_2$ if $S_1 \subseteq S_2$.
  \end{itemize}
  \end{example}

 \begin{remark} $ $\begin{enumerate}
\item  Any surpassing relation  $ \preceq$ on a pair $(\mcM,\mcM_0)$  induces a surpassing relation element-wise on  $\End (\mcM, \mcM_0)$, by $f\preceq g$
 if $f(y) \preceq g(y), $ for all $ y \in \mcM.$
 \item  The surpassing relation $\preceq_0$ restricts to a surpassing relation on   sub-pairs. \end{enumerate}
 \end{remark}

\begin{importantnote}\label{superf}
   A surpassing relation $\preceq$ can be useful, since we may generalize
classical formulas by replacing equality by $\preceq$.
 \end{importantnote}

\subsubsection{$\preceq$-morphisms}

\begin{definition}\label{precmor}
A map $f: (\mcA,\mcA_0)\to  (\mcA',\mcA'_0)$ of pairs is a $\preceq$-\textbf{morphism} if satisfies $f(b_1+b_2)\preceq f(b_1)+f(b_2)$ for  $b_1, b_2$ in $\mcA$,  and  $ f(b_1) \preceq  f(b_2)$ whenever  $b_1\preceq b_2$;  $f$ is
\textbf{$\preceq$-injective}  if  $ f(b_1) \preceq  f(b_2)$ implies  $b_1\preceq b_2$.
\end{definition}

\begin{lemma}\label{mor1}
Every $\preceq$-morphism is a weak morphism.
\end{lemma}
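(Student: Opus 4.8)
The plan is to show directly from the definitions that a $\preceq$-morphism $f\colon(\mcA,\mcA_0)\to(\mcA',\mcA_0')$ sends null sums to null sums, which is exactly the defining condition of a weak morphism in \Dref{symsyst}. First I would recall the two ingredients supplied by \Dref{precmor}: for all $b_1,b_2\in\mcA$ one has $f(b_1+b_2)\preceq f(b_1)+f(b_2)$, and $b_1\preceq b_2$ implies $f(b_1)\preceq f(b_2)$. I will also need the characterization from the first bullet of the Example following \Dref{ddi}, namely that $\mcA_0'=\{\,b\in\mcA' : \zero\preceq_0 b\,\}$ — but to keep things at the level of a general surpassing relation $\preceq$, I would instead use \Dref{ddi}(ii): if $b_1+b_0=b_1'$ with $b_0\in\mcA_0$ then $b_1\preceq b_1'$; in particular $\zero\preceq b_0$ for every $b_0\in\mcA_0$ (taking $b_1=\zero$, $b_1'=b_0$), and symmetrically one wants $\zero\preceq b$ to force $b\in\mcA_0'$, which is the standing assumption on the surpassing relation $\preceq_0$ used throughout.

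The core computation runs as follows. Suppose $\sum_{i=1}^n b_i\in\mcA_0$. Applying $f$ and using $f(b_1+\cdots+b_n)\preceq f(b_1)+\cdots+f(b_n)$ (obtained by induction on $n$ from the binary inequality together with \Dref{ddi}(i) for adding inequalities), we get $f\bigl(\sum_i b_i\bigr)\preceq\sum_i f(b_i)$. On the other hand, since $\sum_i b_i\in\mcA_0$, we have $\zero\preceq\sum_i b_i$ by \Dref{ddi}(ii), and hence $f(\zero)\preceq f\bigl(\sum_i b_i\bigr)$ by the order-preservation clause of \Dref{precmor}. Now $f(\zero)=\zero$ — this needs a brief justification: from $\zero+\zero=\zero$ and the morphism inequality, $f(\zero)\preceq f(\zero)+f(\zero)$, and combined with the pair axioms (or simply the convention that $f$ is module multiplicative, giving $f(\zero)=f(0\cdot b)=0\cdot f(b)=\zero$) one concludes $f(\zero)=\zero$. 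Chaining the two inequalities by transitivity of the pre-order $\preceq$ yields $\zero\preceq\sum_i f(b_i)$, and therefore $\sum_i f(b_i)\in\mcA_0'$, as required.

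I expect the only real subtlety — hardly an obstacle — to be bookkeeping about $f(\zero)$ and about which precise form of the surpassing relation is in force: for the general axiomatic $\preceq$ the implication ``$\zero\preceq b\Rightarrow b\in\mcA_0'$'' is not literally among the axioms of \Dref{ddi}, so I would either invoke that $\preceq=\preceq_0$ (the main example, for which $\zero\preceq_0 b$ means exactly $b=\zero+z$ with $z\in\mcA_0'$, i.e. $b\in\mcA_0'$) or note that the lemma is intended in that setting. Everything else is a routine induction plus transitivity. I would write the proof in three lines: reduce to $f(\zero)=\zero$, produce the chain $\zero\preceq f(\sum b_i)\preceq\sum f(b_i)$, and read off membership in $\mcA_0'$.
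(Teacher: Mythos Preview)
Your argument is correct and follows essentially the same route as the paper's proof: from $\sum b_i\in\mcA_0$ deduce $\zero\preceq\sum b_i$, push this through $f$ and the subadditivity inequality to get $\zero\preceq\sum f(b_i)$, and then read off $\sum f(b_i)\in\mcA_0'$. The paper compresses all of this into one line and does not pause over the two points you flag---that $f(\zero)=\zero$ and that the implication $\zero\preceq b\Rightarrow b\in\mcA_0'$ is not literally among the axioms of \Dref{ddi}---so your more careful bookkeeping is, if anything, an improvement in rigor rather than a different method.
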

\begin{proof}
    If $b_1+b_2 \in \mcA_0$ then $b_1+b_2 \succeq \zero,$ so $f(b_1)+f(b_2) \succeq \zero,$ i.e., $f(b_1)+f(b_2)\in \mcA_0$.
 \end{proof}

 \subsection{Identities and varieties}

 We appeal to some of the notions from universal algebra, without going into the technicalities. Jacobson's book \cite{Jac1980}  is a good resource.

 In brief, an \textbf{$\Omega$-algebra} is a set which has $n$-ary operations which we assume here include addition and multiplication and their bimagma laws, and when appropriate, the negation map.  The 0-ary operations are just  distinguished elements. We also admit identities, i.e., equality of universal atomic formulas  (in terms of the operations). A \textbf{homomorphism} of $\Omega$-algebras is a function which preserves all the given operations.

 We   generalize this notion to an \textbf{$\Omega$-algebra pair} to be a pair $(\mcA,\mcA_0)$ of $\Omega$-algebras such that $\mcA_0$ is a multiplicative ideal of $\mcA$ invariant under  the given unary operations.

 \begin{definition}
 A \textbf{free object} for  $\mathcal{V}$ is some
 $U \in \mathcal{V}$ together with an index set $I$ and a set $X = \{x_i: i \in I\}$,
 such that for any  $\mcA \in \mathcal{V}$ and $\{ b_i: i \in I\}\subseteq \mcA$ there is a unique homomorphism $\Phi:U \to \mcA $ for which $\Phi(x_i) = b_i,$
 for all $ i \in I.$
 \end{definition}

 \begin{example}\label{univex}$ $ Let $I$ be an index set,
 and $I_0\subset I$.
 \begin{itemize}
     \item  The free $\Ccal$-module  of rank $|I|$ was defined in \Dref{basicdefs}, which we denote as~$\Ccal^{(I)}.$

     \item  The free $\Ccal$-pair $(\Ccal^{(I)},\Ccal^{(I_0)})$ of rank $(|I|,|I_0|)$; one can notice that $\Ccal^{(I_0)} = \sum _{i\in I_0}\Ccal x_i$ has a   basis $\{x^i: i \in I_0\}$ which is expanded to a basis $\{x^i: i \in I\}$ of $\Ccal ^{(I)} = \sum _{i\in I}\Ccal x_i$.

     \item  One can take the  free $(\Ccal ,\Ccal _0)$-module  $(\Ccal ,\Ccal _0)^{(I)}= (\Ccal  ^{(I)},\Ccal _0^{(I)} )$ over $(\Ccal ,\Ccal _0)$.

      \item  The free $\Ccal $-module with a formal negation map, of   rank $|2I|$, has a formal basis $$\{x_i: i \in I\}\cup \{y_i: i \in I\} ,$$ where we define $(-)x_i = y_i$ and $(-)y_i = x_i$. (This idea will be pursued in~Example~\ref{Making}.)

     \item The free multiplicative magma $\mathcal{M}(I)$ is constructed as the set of words in the indeterminates~$x_i,\ i \in I$,  without associativity.  Multiplication is juxtaposition, but with putting in parentheses at each stage. To wit, the $x_i$ are words, and if $w_1$ and $w_2$ are words, then $(w_1w_2)$ is a word.  For example, $(x_1(x_2x_3))$ and $((x_1x_2)x_3)$ are different words.

     We get a pair by taking $\mathcal{M}(I)_0$ to be the submagma
     consisting of words, at least one of whose indeterminates is $x_i$ for $i\in I_0.$

     \item The free d-bimagma $\mathcal{F}(I)$  is the magma semialgebra of the free multiplicative magma, i.e., is built from the free module having as basis the free multiplicative magma $\mathcal{M}(I)$, whose multiplication is extended via distributivity, as elaborated below in \Eref{fd}.

      \item The free d-bimagma pair  is  $(\mathcal{F}(I),\mathcal{F}(I)_0)$.

 \item The free semigroup is constructed as the set of words in indeterminates $x_i$, with multiplication being juxtaposition, but without parentheses.

 \item  The free semiring is the semigroup semiring of the free semigroup.

 \end{itemize}
 \end{example}

 Recall that a \textbf{variety} $\mathcal{V}$ in universal algebra is closed under direct products, substructures, and homomorphic images.

 \begin{lemma}\label{var2}$ $\begin{enumerate}\eroman
     \item If $(\mcA, \mcA_0)$ is a pair and $\varphi:\mcA \to \overline{\mcA}$ is a homomorphism, then  $(\overline{\mcA},\ \overline{\mcA}_0 \! :=  \!  \varphi(\mcA_0))$  is a pair. Furthermore, a surpassing relation $\preceq$ on $(\mcA, \mcA_0)$ induces a surpassing relation $(\overline{\mcA}, \overline{\mcA}_0)$, by putting $\bar b _1\preceq \bar b_2$
     if $b_1\preceq b_2.$
     \item If $(\mcA_i, \mcA_{i0})$ are   pairs for each $i\in I,$ then the direct product $(\prod_{i\in I} \mcA_i,\prod_{i\in I} \mcA_{i0}) $ is a pair, and surpassing relations on each pair $(\mcA_i, \mcA_{i0})$ induce a surpassing relation on  $(\prod_{i\in I} \mcA_i,\prod_{i\in I} \mcA_{i0}) $, componentwise.
     \item Generalizing (ii), for any filter $\mathcal{F}$ on $I$, one can define the \textbf{reduced product} (cf.~\cite{CK}) $$ \prod_{i\in I} \mcA_i / \mathcal{F}$$ by saying $(b_i)\cong (c_i)$ if $\{ i\in I: b_i = c_i\} \in \mathcal{F},$ and
     then get the \textbf{reduced~pair}  $ (\prod_{i\in I} \mcA_i / \mathcal{F},   \prod_{i\in I} \mcA_{i0} / \mathcal{F}),$ which inherits the  surpassing relation;
  i.e., $(b_i)\preceq (c_i)$ if and only if $\{i:  b_i \preceq  c_i \}\subseteq \mathcal{F}.$
     \item Negation maps are preserved under reduced products.
      \item Weak Property N is preserved under reduced products.
 \end{enumerate}
 \end{lemma}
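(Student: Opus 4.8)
The statement to prove is Lemma~\ref{var2}, and the final part (v) asserts that Weak Property N is preserved under reduced products. Parts (i)--(iv) are standard universal-algebra facts adapted to pairs, so I focus the plan on (v), using (i)--(iv) freely.

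\medskip

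The plan is to unwind the definitions. Fix pairs $(\mcA_i,\mcA_{i0})$ for $i\in I$, each satisfying Weak Property N, and a filter $\mathcal{F}$ on $I$. Write $\mcA = \prod_{i\in I}\mcA_i$, $\mcA_0 = \prod_{i\in I}\mcA_{i0}$, and let $\bar{\mcA} = \mcA/\mathcal{F}$, $\bar{\mcA}_0 = \mcA_0/\mathcal{F}$ be the reduced pair of part (iii). Given a class $\overline{b} \in \bar{\mcA}$ with representative $b = (b_i)_{i\in I} \in \mcA$, I would use Weak Property N in each coordinate: for each $i$ choose $b_i' \in \mcA_i$ with $b_i + b_i' = b_i' + b_i \in \mcA_{i0}$. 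Set $b' = (b_i')_{i\in I}\in \mcA$ and $\overline{b}' = $ its class in $\bar{\mcA}$. Then $b + b' = b' + b$ holds coordinatewise in $\mcA$, hence in $\bar{\mcA}$, and $b+b' = (b_i + b_i')_{i\in I}$ lies in $\mcA_0$ since each coordinate lies in $\mcA_{i0}$; therefore its class lies in $\bar{\mcA}_0$. This shows $\overline{b} + \overline{b}' = \overline{b}' + \overline{b} \in \bar{\mcA}_0$, which is exactly Weak Property N for the reduced pair.

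\medskip

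The one point requiring a word of care — and the only place the filter $\mathcal{F}$ genuinely enters — is that the construction must be well-defined on $\cong$-classes in the sense that it does not matter which representative $b$ we picked. In fact we do not need the witness $b'$ to be canonical: Weak Property N only asserts existence of \emph{some} witness for each element, so it suffices to produce one witness for one chosen representative, exactly as above. (If one did want representative-independence, one would note that if $(b_i)\cong (c_i)$, i.e. $\{i : b_i = c_i\}\in\mathcal{F}$, then choosing $b_i'$ for $b_i$ and the same element as witness for $c_i$ on that large set, and arbitrarily elsewhere, gives $\cong$-equal witnesses; but this refinement is unnecessary.) So there is essentially no obstacle here beyond bookkeeping: the coordinatewise choice, which is legitimate since no uniformity across $i$ is demanded, does all the work, and the filter merely passes the membership $b+b'\in\mcA_0$ and the equality $b+b'=b'+b$ down to the quotient via part (iii).

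\medskip

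For completeness I would dispatch (i)--(iv) first, briefly: (i) checking the pair axioms $\Ccal_0\overline{\mcA}\subseteq\overline{\mcA}_0$ and the bimagma-ideal conditions transfer along a surjective homomorphism, and that $\preceq$ descends because $\varphi$ is additive; (ii) checking direct products of pairs are pairs componentwise and $\preceq$ is defined componentwise; (iii) the reduced product is the homomorphic image of the direct product under the quotient by $\cong$, so it is a pair by (i) applied to (ii), and the induced surpassing relation is read off directly; (iv) if each $(-)_i$ is a negation map, the componentwise map $(-)$ on $\prod\mcA_i$ descends to $\bar{\mcA}$, still of order $\le 2$ and still satisfying $b+(-)b\in\bar{\mcA}_0$, by the same ``coordinatewise then pass to the quotient'' argument. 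Then (v) follows as sketched. The main obstacle, such as it is, is simply to notice that Weak Property N is a purely existential (not functorial) condition, so no coherent choice of witnesses across coordinates is needed.
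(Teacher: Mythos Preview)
Your proposal is correct and matches the paper's intent: the paper's own proof is the single sentence ``The verifications are routine, noting that a filter is closed under finite intersections,'' and your coordinatewise-witness argument for (v) is exactly the routine verification one would expect. The one remark worth making is that the filter's closure under finite intersections, which the paper singles out, is really needed for parts (iii) and (iv) (well-definedness and preservation of the surpassing-relation axioms on the quotient), not for (v) itself---as you correctly observe, for (v) one simply builds the witness in the full direct product and pushes it down.
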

 \begin{proof}
 The verifications are routine, noting that a filter is closed under finite intersections.
 \end{proof}

For the $\preceq$-theory,  we introduce the relation $\preceq$ into the language, even though it introduces difficulties.

\begin{remark}
    \begin{enumerate}

        \item   In general  a surpassing relation $\preceq$  need not  pass to homomorphic images, because one could conceivably have $b_1 \preceq b_2$ and $b_3 \preceq b_4$ with $\bar b_2 = \bar b_3,$ but $\bar b_1 \not\preceq \bar b_4.$

\item The surpassing relation
 $\preceq_0$ does remain a surpassing relation under homomorphic images.
 Namely, if $b_2 = b_1 + c_0$ and
 $b_4 = b_3 + d_0$ for $c_0, d_0 \in \mcA_0,$ with $\bar b_2 = \bar b_3,$ then
 $$\bar b_4 = \bar b_3+ \bar d_0 = \bar b_2+ \bar d_0 =  \bar b_1 + \bar c_0 + \bar d_0,$$
 i.e.,  $\bar b_1  \preceq_0 \bar b_4.$

\item The surpassing relation $\preceq_0$  need not  pass to
    sub-pairs, because we might lose null elements.

    \end{enumerate}
\end{remark}

 \begin{definition}\label{ide}$ $
 \begin{enumerate}\eroman
     \item
  An \textbf{identity}  is a universal atomic formula $f(x_1,\dots, x_m)=g(x_1,\dots, x_{m'}).$
      \item
  A $\preceq$-\textbf{identity} is a universal atomic formula  $f(x_1,\dots, x_m) \preceq g(x_1,\dots, x_{m'}).$

 \end{enumerate}
 \end{definition}

 \begin{proposition}\label{precvar}
 Any class of $\Omega$-algebras defined by identities and $\preceq_0$-identities on pairs has free objects.
 \end{proposition}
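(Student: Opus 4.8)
The plan is to run the classical ``term algebra modulo relations'' construction of free objects, adapted to pairs and with the surpassing relation $\preceq_0$ absorbed into the data. Write $\mathcal V$ for the class in question, cut out by a family of identities $\{f_\lambda = g_\lambda\}$ and of $\preceq_0$-identities $\{p_\mu \preceq_0 q_\mu\}$. The first step is to observe that $\mathcal V$ is closed under direct products and under homomorphic images. For products this is immediate from \Lref{var2}(ii): a product of pairs is a pair, identities pass to products, and a $\preceq_0$-identity passes to a product because a null witness for $p_\mu(\vec b)\preceq_0 q_\mu(\vec b)$ may be chosen coordinatewise, so the tuple of witnesses lies in $\prod_i \mcA_{i0}$. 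For homomorphic images, \Lref{var2}(i) gives that a homomorphic image of a pair is a pair and that identities are preserved; and here one genuinely uses $\preceq_0$ rather than an arbitrary $\preceq$, since by the computation in the Remark preceding \Dref{ide} a relation $q_\mu = p_\mu + z$ with $z\in\mcA_0$ forces $\bar q_\mu = \bar p_\mu + \bar z$ in any homomorphic image. (By the same Remark, $\preceq_0$ may fail to descend to a sub-pair, so $\mathcal V$ need not be closed under sub-pairs; this is the one place where the argument is not a formal instance of ``equationally defined classes have free objects''.)

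Next, fix an index set $I$ with $I_0\subseteq I$, put $X = \{x_i : i\in I\}$, $X_0 = \{x_i : i\in I_0\}$, and let $(\mathcal F,\mathcal F_0)$ be the free $\Omega$-algebra pair on $(X,X_0)$; this exists by the usual term-algebra construction, the concrete cases needed in this paper being the ones listed in \Eref{univex}. Any member of $\mathcal V$ generated, as an $\Omega$-algebra, by the image of $X$ is a homomorphic image of $(\mathcal F,\mathcal F_0)$, hence of cardinality at most $|\mathcal F|$, so up to isomorphism there is only a set $\{(\mcA_j,\mcA_{j0},\varphi_j)\}_{j\in J}$ of such ``$X$-generated'' members of $\mathcal V$ paired with their generating maps $\varphi_j\colon X\to\mcA_j$. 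Form $(\mathcal P,\mathcal P_0) = \prod_{j\in J}(\mcA_j,\mcA_{j0})\in\mathcal V$, let $\delta\colon X\to\mathcal P$ be the diagonal map $x\mapsto(\varphi_j(x))_j$ and $\Phi\colon(\mathcal F,\mathcal F_0)\to(\mathcal P,\mathcal P_0)$ the pair-homomorphism extending it, and let $(\Ucal,\Ucal_0)$ be the image sub-pair, i.e.\ $\Ucal = \Phi(\mathcal F)$ (the $\Omega$-subalgebra generated by $\delta(X)$) with $\Ucal_0 = \Phi(\mathcal F_0)$.

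It remains to check that $(\Ucal,\Ucal_0)\in\mathcal V$ and that it has the required universal property. The identities hold in $\Ucal$ because they are inherited by subalgebras of $\mathcal P$; the $\preceq_0$-identities are the delicate point and are discussed below. Granting them, the universal property is routine: given $(\mcA,\mcA_0)\in\mathcal V$ and a set map $\psi\colon X\to\mcA$ with $\psi(X_0)\subseteq\mcA_0$, the $\Omega$-subalgebra $\mcA'$ of $\mcA$ generated by $\psi(X)$, equipped with the image of $\mathcal F_0$ under the extension $\mathcal F\to\mcA$ of $\psi$, occurs among the $(\mcA_j,\mcA_{j0},\varphi_j)$; composing the corresponding projection $\mathcal P\to\mcA_j$ with the inclusion $\mcA'\hookrightarrow\mcA$ and restricting to $\Ucal$ produces an $\Omega$-homomorphism of pairs $\Ucal\to\mcA$ sending $\delta(x_i)\mapsto\psi(x_i)$, and it is the unique one since $\Ucal$ is generated by $\delta(X)$.

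The step I expect to be the genuine obstacle is exactly the preservation of the $\preceq_0$-identities on passing from $\mathcal P$ to the generated sub-pair $\Ucal$: a $\preceq_0$-identity is the existential statement ``$\exists\,z\in\mcA_0$ with $q_\mu = p_\mu + z$'', and although the product supplies a witness $z\in\mathcal P_0$ with $q_\mu(\vec u) = p_\mu(\vec u)+z$ for any $\vec u\in\Ucal$, that $z$ need not lie in $\Ucal_0$. Handling this is what forces a careful choice of $\Ucal_0$ (for instance enlarging it toward $\Ucal\cap\mathcal P_0$) and is also the reason general $\preceq$-identities --- which need not even survive homomorphic images --- are excluded from the statement. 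A clean alternative for this step is to Skolemize: for each $\mu$ adjoin to $\Omega$ an operation symbol $s_\mu$ whose values are declared to lie in the null set, replace $p_\mu\preceq_0 q_\mu$ by the honest identity $q_\mu(\vec x) = p_\mu(\vec x)+s_\mu(\vec x)$, build the free object of the resulting genuinely equational class of $(\Omega\cup\{s_\mu\})$-algebra pairs, and take the $\Omega$-reduct; the price is that one must then verify that this reduct is still generated by $X$, which brings one back to essentially the same bookkeeping.
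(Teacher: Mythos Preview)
Your main construction via products and generated sub-pairs is not completed: you correctly flag that the $\preceq_0$-identities need a null witness \emph{inside} $\Ucal_0$, and neither of your suggested patches is carried through. Enlarging $\Ucal_0$ to $\Ucal\cap\mathcal P_0$ does repair the $\preceq_0$-identities, but then you must redo the universal-property check, and in particular the auxiliary pair $(\mcA',\mcA'_0)$ you build from a target $(\mcA,\mcA_0)$ suffers from the very same sub-pair problem, so the argument does not close as written.

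The paper bypasses the Birkhoff route entirely and goes straight to what you list as your ``clean alternative'': it Skolemizes, but with \emph{constants} rather than function symbols. For each $\preceq_0$-identity $f\preceq_0 g$ it adjoins a fresh indeterminate $y_{f,g}$ to the null part $U_0$ and imposes the honest identity $f+y_{f,g}=g$; one is then in a genuinely equational setting and the standard congruence construction applies. Your worry that ``the reduct is still generated by $X$'' is not an obstruction in the paper's treatment: the $y_{f,g}$ are simply taken as part of the generating set, and the paper later (in its Theorem on the free $\preceq_0$-Lie pair) explicitly notes that uniqueness of the extension on the original $x_i$ alone is \emph{not} claimed, precisely because several null witnesses may serve. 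So the ``bookkeeping'' you anticipate is handled by accepting a weaker universal property rather than by proving the stronger one.
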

 \begin{proof}
 For identities we simply impose the relations on the elements of $U$ and $U_0$ by means of congruences, as is customary in universal algebra. For $\preceq_0$-identities  $f \preceq_0 g$ we adjoin fresh distinct indeterminates $y_{f,g}$ to $U_0$ and impose the relations $f + y_{f,g} = g.$
\end{proof}

\begin{example}\label{fd}
The elements of the free d-bimagma are obtained by repeated addition and multiplication. Distributivity permits us to rewrite any element
$f(x_1,\dots, x_m)$ as a sum $\sum_j h_j$ of \textbf{ monomials}, i.e., products of the $x_i$, together with some coefficient.  We say that the monomial $h_j$ is \textbf{multilinear of degree $m$} if each~$x_i$, $1 \le i \le m$, appears exactly once in $h_j;$ moreover $f$ is \textbf{multilinear of degree $m$} if each of its monomials $h_j$ is {multilinear of degree $m$}.

 In each case, an identity or $\preceq$-identity is \textbf{multilinear of degree $m$}
     if in the notation of \Dref{ide}, $m={m'}$ and both $f$ and $g$ are {multilinear of degree $m$}.
\end{example}

\begin{lemma}\label{ml} To verify a multilinear identity or $\preceq$-identity in an $\mcA_0$-additive bimagma pair, it is enough to
 check it on a spanning set $S$ over $\Ccal $.
   \end{lemma}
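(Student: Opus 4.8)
The plan is to exploit multilinearity to reduce an arbitrary substitution to substitutions drawn from the spanning set $S$. Suppose the identity (or $\preceq$-identity) is $f(x_1,\dots,x_m)=g(x_1,\dots,x_m)$ (resp.\ $f\preceq g$), with both sides multilinear of degree $m$ in the sense of \Eref{fd}. We are given that it holds whenever each $x_i$ is replaced by an element of $S$, and we must show it holds for arbitrary $y_1,\dots,y_m\in\mcA$. Since $S$ spans $\mcA$ over $\Ccal$, write each $y_i=\sum_{j}c_{ij}s_{ij}$ with $c_{ij}\in\Ccal$ and $s_{ij}\in S$, a finite sum.

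First I would substitute these expressions into $f$. Because $\mcA$ is an $\mcA_0$-additive bimagma pair, it is in particular a d-bimagma, so distributivity lets me expand $f(y_1,\dots,y_m)$ into a finite sum of terms; and because $f$ is multilinear of degree $m$, each $x_i$ occurs exactly once in each monomial of $f$, so each resulting term is, up to a scalar coefficient from $\Ccal$ (using the $\Ccal$-bimagma axiom $(cy_1)y_2=c(y_1y_2)=y_1(cy_2)$ to pull all the $c_{ij}$ out front), of the form $f(s_{1,j_1},\dots,s_{m,j_m})$ with each $s_{i,j_i}\in S$. Concretely,
\[
f(y_1,\dots,y_m)=\sum_{j_1,\dots,j_m} c_{1,j_1}\cdots c_{m,j_m}\, f(s_{1,j_1},\dots,s_{m,j_m}),
\]
and the identical expansion holds for $g$ with the \emph{same} coefficients and the \emph{same} tuples of elements of $S$.

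Now I apply the hypothesis termwise. For a plain identity: $f(s_{1,j_1},\dots,s_{m,j_m})=g(s_{1,j_1},\dots,s_{m,j_m})$ for every tuple, so the two sums are equal term by term, hence $f(y_1,\dots,y_m)=g(y_1,\dots,y_m)$. For a $\preceq$-identity: we have $f(s_{1,j_1},\dots,s_{m,j_m})\preceq g(s_{1,j_1},\dots,s_{m,j_m})$ for each tuple; multiplying by the scalar $c_{1,j_1}\cdots c_{m,j_m}\in\Ccal$ preserves $\preceq$ by \Dref{ddi}(iii), and summing preserves $\preceq$ by \Dref{ddi}(i), so $f(y_1,\dots,y_m)\preceq g(y_1,\dots,y_m)$. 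The main point to be careful about is purely bookkeeping: verifying that multilinearity really does force every monomial in the expanded sum to have exactly one factor coming from each $y_i$, so that the coefficients collect cleanly as a single product $c_{1,j_1}\cdots c_{m,j_m}$ and the leftover is genuinely $f$ (resp.\ $g$) evaluated at a tuple from $S$; once that is set up, the compatibility of $\preceq$ with addition and $\Ccal$-scaling does the rest, and no appeal to negation is needed.
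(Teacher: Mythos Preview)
Your proof is correct and takes the same approach as the paper's, which simply says to write each element as $\sum_j c_{ij}s_j$ and ``open up the expression''; you have just spelled out the bookkeeping and the appeal to \Dref{ddi}(i),(iii) for the $\preceq$ case. One small correction: $\mcA_0$-additivity means only that $\mcA_0$ is an ideal of $\mcA$, not that $\mcA$ is a d-bimagma, so your line ``it is in particular a d-bimagma'' is not justified by the stated hypothesis---distributivity is an implicit assumption here, needed equally by the paper's one-line proof.
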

   \begin{proof}
Just write each element $b_i$ as $\sum_j c_{ij}s_j$ for $s_j\in S,$ and open up the expression.
   \end{proof}

\section{The  basic theory of  Lie pairs}\label{LieD0}

We introduce adjoints as a preparation for the Lie theory.

\begin{definition}\label{ad1}
For a bimagma pair $(\mcA,\mcA_0)$, and $x\in \mcA,$
      we define
the \textbf{adjoint maps} $\ad_x : \mcA \to \mcA$ and $\ad^\dagger_x
: \mcA \to \mcA $, by  $\ad_x(y) = xy$ and $\ad_x^\dagger(y)
= yx$ for $y\in \mcA.$
 We also define $\Ad_\mcA = \{ \ad_x: x\in \mcA\}$,  $\Ad^\dag_\mcA = \{ \ad^\dag_x: x\in \mcA\}$,  $\AD(\mcA) = \Ad_\mcA +\Ad^\dag_\mcA,$ and
  ${\AD(\mcA) }_0 = \{ f \in \AD(\mcA) : f(\mcA)\subseteq \mcA_0\}.$
\end{definition}

\begin{remark}$ $
\begin{enumerate}\eroman
    \item  Suppose $(\mcA,\mcA_0)$ is a  pair $(\mcA,\mcA_0)$, and $x\in \mcA.$ Then
$\ad_x, \ad^\dag_x \in \End (\mcA, \mcA_0)$.
    \item
$({\AD(\mcA) } ,{\AD(\mcA) }_0 )$ is a sub-pair of $(\End \mcA, \End \mcA_0)$.
\end{enumerate}
\end{remark}

 \subsection{Lie brackets and Lie pairs}

We are ready to bring in the Lie
bracket, the focus of this paper.

 \begin{definition}\label{LieD}$ $\begin{enumerate}\eroman
    \item
A $\mcL_0$-\textbf{Lie bracket} on a pair $(\mcL,\mcL_0)$ is a map, written  $[\phantom{xy}]: \mcL
\times \mcL \to \mcL$,
satisfying the following \textbf{Lie bracket axioms}, for all $x,y\in \mcL$,
with $\ad_x$ and $\ad_x^\dagger $ as in \Dref{ad1}:

 \begin{enumerate}\eroman
     \item  $\ad_x(x) \in \mcL_0,$ i.e., $[xx]\in\mcL_0$,

   \item $\ad_x + \ad_x^\dagger \in {\End (\mcL, \mcL_0)}_0,$ i.e., $[xy]+[yx]\in\mcL_0$  (the intuition being that  right multiplication acts like the negation of  left multiplication),

  \item    $\ad_{[xy]} +\ad_x \ad_y ^\dag  + \ad_y ^\dag \ad_x\in \End (\mcL, \mcL_0)_0, $
      called the \textbf{Jacobi $\mcL_0$- identity}.

    \item   [(c$'$)] $\ad^\dag_{[xy]} +\ad_y ^\dag \ad_x  + \ad_y \ad_x^\dag  \in \End (\mcL, \mcL_0)_0, $
      called the \textbf{reflected Jacobi $\mcL_0$- identity}.

          \item $\ad_{cx} = c\ad_{x}$ for all $c\in C;$
      \item If $\sum_i  x_i \in \mcL_0,$ then   $\sum_i \ad^\dag_y (x_i)\in \mcL_0$, and $\sum_i \ad_y (x_i)\in \mcL_0$ for all $y\in \mcL$.
 \end{enumerate}

  \end{enumerate}
   \end{definition}

   \begin{remark}\label{spel0}$ $
  \begin{enumerate}\eroman

     \item   For $z\in \mcL,$ Axiom (c) of (i) translates to
   \begin{equation}\label{spel}
       [[xy]z] +[x[yz]]+ [y[zx]] \in \mcL_0.
   \end{equation}

   \item  Interchanging $y$ and $z$ in \eqref{spel2}
yields $\ad_x \ad_y =  \ad^\dag_y \ad^\dag_x .$ If this holds then axiom $(c')$ is superfluous.
 \end{enumerate}
\end{remark}

\begin{lemma}
    Axiom (b) is implied by (a)  in any Lie pair satisfying  $\mcL_0$-elimination.
    \end{lemma}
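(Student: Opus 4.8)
The plan is to run the classical polarization argument that deduces skew-symmetry of a bracket from its being alternating, with the single twist that, in the absence of genuine additive inverses, the final ``cancellation'' must be performed by $\mcL_0$-elimination. The only structural fact I need beyond the stated hypotheses is that the Lie bracket is additive in each of its two arguments; this is exactly the assertion that $\ad_x,\ad^\dagger_x\in\End(\mcL,\mcL_0)$, recorded in the Remark following \Dref{ad1} (read with the bracket playing the role of the product, as elsewhere in \Dref{LieD}), which yields $[x(y_1+y_2)]=[xy_1]+[xy_2]$ and $[(x_1+x_2)y]=[x_1y]+[x_2y]$.

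I would then fix $x,y\in\mcL$ and apply axiom (a) to $x$, to $y$, and to $x+y$, obtaining $[xx]\in\mcL_0$, $[yy]\in\mcL_0$, and $[(x+y)(x+y)]\in\mcL_0$. Expanding the last bracket by biadditivity,
$$[(x+y)(x+y)]=[xx]+[xy]+[yx]+[yy],$$
so this four-term sum lies in $\mcL_0$. Next I would peel off the two known null summands one at a time: writing the sum as $[xx]+\big([xy]+[yx]+[yy]\big)$ and invoking $\mcL_0$-elimination with the null element $[xx]$ gives $[xy]+[yx]+[yy]\in\mcL_0$; reordering this as $[yy]+\big([xy]+[yx]\big)$ (addition on the $\Ccal$-module $\mcL$ being commutative) and applying $\mcL_0$-elimination once more with the null element $[yy]$ yields $[xy]+[yx]\in\mcL_0$, which is axiom (b). If one additionally knows that $\mcL_0$ is closed under addition — for instance whenever $(\mcL,\mcL_0)$ is $\mcL_0$-additive — then $[xx]+[yy]\in\mcL_0$ and a single application of $\mcL_0$-elimination suffices.

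I do not expect any genuine obstacle; the argument is short. The two points that deserve care are (i) that the bracket is honestly additive in each slot, which is what legitimizes the polarization expansion, and (ii) that, since no negatives are available, the passage from ``$[xx]+[xy]+[yx]+[yy]\in\mcL_0$'' to ``$[xy]+[yx]\in\mcL_0$'' cannot be cancellation but must be routed through the $\mcL_0$-elimination hypothesis, applied term by term — and it is precisely at this step that the hypothesis of the lemma is consumed.
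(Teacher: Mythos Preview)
Your argument is correct and follows the same polarization idea as the paper: expand $[(x+y)(x+y)]\in\mcL_0$ by biadditivity and then strip off the diagonal terms via $\mcL_0$-elimination. The only difference is that the paper removes $[xx]+[yy]$ in a single step (tacitly assuming $\mcL_0$ is closed under addition), whereas you peel off $[xx]$ and $[yy]$ separately; your version is slightly more scrupulous on this point but otherwise identical.
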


    \begin{proof}
    $[xx]+[yy] +[xy]+[yx] =[(x+y)(x+y)] \in \mcL_0.$ But  $[xx]+[yy] \in \mcL_0$ by (a), so $[xy]+[yx]\in \mcL_0.$
    \end{proof}

    \begin{definition} A \textbf{\weakLie\ pair} is a pair $(\mcL,\mcL_0)$ endowed
with a  $\mcL_0$-Lie bracket. A \textbf{Lie pair} is a \weakLie\  pair $(\mcL,\mcL_0)$ whose  $\mcL_0$-Lie bracket is $C$-bilinear, also satisfying the condition that if $\sum_i  x_i \in \mcL_0,$ then $\sum_i \ad_{x_i} \in {\End (\mcL, \mcL_0)}_0$, $\sum_i \ad^\dag_{x_i} \in {\End (\mcL, \mcL_0)}_0$. The   $\mcL_0$-Lie bracket is \textbf{$\dag$-reversible} if $\ad_x ^\dag \ad_y= \ad_x \ad^\dag_y,$ for all $x,y\in \mcL.$
    \end{definition}

The $\dag$-reversibility translates to
    \begin{equation}\label{spel2}
   [[yz]x]= [x[zy]].
   \end{equation}

Here are  other desirable properties that hold  in the classical situation.

\begin{definition}\label{LieD8}$ $
\begin{enumerate}\eroman
    \item
 The  \textbf{reflected} $\mcL_0$-Lie bracket is defined as  $[xy]^\dag = [yx].$ $(\mcL,\mcL_0)^\dag$ is $(\mcL,\mcL_0)$ with the reflected $\mcL_0$-Lie bracket.

      \item The   $\mcL_0$-Lie bracket is \textbf{$\mcL_0$-reversible} if  $[xy]\in \mcL_0$ implies  $[yx]\in \mcL_0$,
 \item The   $\mcL_0$-Lie bracket is   \textbf{$\mcL_0$-symmetric} if $\ad_x  = \ad^\dag _x $ for $x\in \mcL_0.$

  \item  The  \textbf{reflected \weakLie\ pair} is the \weakLie\ pair
  $(\mcL,\mcL_0)$ with reflected $\mcL_0$-Lie bracket.

           \item A \textbf{\Lie pair} is a Lie pair   $(\mcL,\mcL_0)$ whose
$\mcL_0$-Lie bracket is  $\dag$-reversible.
\end{enumerate}
\end{definition}

\begin{remark}
      The reflection of a Lie pair is a Lie pair.
\end{remark}
\begin{importantnote}   $ $\begin{enumerate}\eroman
     \item We always assume that $\mcL \ne \mcL_0,$ since otherwise the axioms are vacuous.
      \item If $\mcL_{0} = \{\zero\},$ the axioms revert to classical Lie theory.
  \item We often view $\mcL$ as a bimagma, whose multiplication is the Lie bracket. Usually $\mcL_0$  is a sub-bimagma.
    \item
 We need  bilinearity to determine a Lie bracket in terms of   products of basis elements. But
there is an interesting example (Theorem~\ref{Kras}) arising from hyperrings, which fails bilinearity yet satisfies  part of distributivity.
    \item
In general neither $\dagger$-reversibility nor $\Lcal_0$-reversibility holds, cf.~the cross
 product examples of
 \S\ref{cp}.
But if $\Lcal_0$-elimination holds, then $\Lcal_0$ reversibility holds. Indeed, if $[xy]\in \Lcal_0$, then $[xy]+[yx]\in\Lcal_0$,  implying $[yx]\in\Lcal_0$.
\end{enumerate}
\end{importantnote}

 \subsubsection{Lie brackets  on a free module over a base pair $(\Ccal ,\Ccal _0)$}\label{sec311}

The following observation provides a method of constructing  Lie brackets on a free module over a base pair $(\Ccal ,\Ccal _0)$, especially in the finite dimensional case.

\begin{lemma}\label{fgen}
If $\mcL$ is a free module over a base pair $(\Ccal ,\Ccal _0)$ with basis $\{b_i : i\in I\}$, then the Lie bracket can be defined in terms of the products
$$
[b_ib_j]=\sum_k c^k_{ij}b_k, \quad c^k_{ij}\in \Ccal ,
$$
and $(\Lcal,\Lcal_0)$ is a Lie pair if and only if
 these coefficients satisfy the following axioms for each ${i,j,k,m}\in I$:
\begin{enumerate}
    \item  $ c_{ii}^m\in \Ccal _0,$
   \item  $  c_{i,j}^m+ c_{j,i}^m \in \Ccal _0$,
   \item $\sum _{l} (c_{ij}^lc_{lk}^m+ c_{kj}^lc_{li}^m+c_{ki}^lc_{lj}^m)\in \Ccal _0,$
\item $\sum _{l} (c_{ij}^lc_{lk}^m+  c_{kj}^lc_{li}^m+c_{ki}^lc_{lj}^m)\in \Ccal _0$.
\end{enumerate}
Moreover $\dag$-reversibility holds if and only if
$\sum _{l} c_{ij}^lc_{lk}^m= \sum _{l} c_{kj}^l c_{li}^m,$ for all ${i,j,k,m}.$\end{lemma}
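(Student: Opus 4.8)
The plan is to translate each abstract Lie-pair axiom from \Dref{LieD} into the language of structure constants, using multilinearity (\Lref{ml}) to reduce everything to checking the axioms on basis elements, and then to expand the adjoint composites in terms of the $c_{ij}^k$.

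First I would observe that, since $[\phantom{xy}]$ is $C$-bilinear by hypothesis and the axioms (a)--(c$'$) of \Dref{LieD} are multilinear $\preceq_0$-identities (saying certain expressions lie in $\mcL_0$), \Lref{ml} lets me verify them on the spanning set $\{b_i\}$. So I would fix $i,j,k,m\in I$ and read off the coefficient of $b_m$ in the relevant expression, using that $\mcL_0 = \Ccal_0^{(I)} = \sum_{i\in I}\Ccal_0 b_i$, so a $C$-combination $\sum_m d_m b_m$ lies in $\mcL_0$ iff every $d_m\in \Ccal_0$. Axiom (a), $[b_ib_i]\in\mcL_0$, is then exactly $c_{ii}^m\in\Ccal_0$ for all $m$; axiom (b), $[b_ib_j]+[b_jb_i]\in\mcL_0$, is $c_{ij}^m+c_{ji}^m\in\Ccal_0$. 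For the Jacobi axiom (c), I would use the form \eqref{spel}, namely $[[b_ib_j]b_k]+[b_i[b_jb_k]]+[b_j[b_kb_i]]\in\mcL_0$; expanding, $[[b_ib_j]b_k] = \sum_l c_{ij}^l[b_lb_k] = \sum_{l,m} c_{ij}^l c_{lk}^m b_m$, and similarly for the other two terms, the coefficient of $b_m$ being $\sum_l (c_{ij}^l c_{lk}^m + c_{jk}^l c_{li}^m + c_{ki}^l c_{lj}^m)$ — which after reindexing matches the stated condition (3), and condition (4) comes the same way from the reflected Jacobi axiom (c$'$) via $[[b_kb_j]b_i]$-type expansions. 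Axioms (d) ($\ad_{cx}=c\ad_x$) and (e) (the $\mcL_0$-summability conditions) are automatic from $C$-bilinearity and from $\mcL_0$ being the $C$-submodule $\Ccal_0^{(I)}$, since any $\Ccal$-combination of elements of $\Ccal_0^{(I)}$ lands back in $\Ccal_0^{(I)}$; I would note this explicitly so the reader sees nothing is lost. The ``only if'' direction is immediate: specializing $x=b_i$, $y=b_j$, etc., in the axioms gives precisely (1)--(4) back.

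For the $\dag$-reversibility clause, I would unwind \eqref{spel2}, $[[b_yb_z]b_x] = [b_x[b_zb_y]]$, on basis elements $b_i,b_j,b_k$: the left side is $\sum_{l,m} c_{jk}^l c_{li}^m b_m$ (or the appropriate index pattern) and the right side is $\sum_{l,m} c_{kj}^l c_{il}^m b_m$; equating coefficients of $b_m$ and reindexing gives $\sum_l c_{ij}^l c_{lk}^m = \sum_l c_{kj}^l c_{li}^m$ for all $i,j,k,m$. Conversely this equality, fed back through bilinearity and \Lref{ml}, yields \eqref{spel2} in general.

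The only real care needed — and the place I would slow down — is bookkeeping of indices: the three cyclic terms in the Jacobi identity and the distinction between the Jacobi and reflected-Jacobi forms produce superficially different but equivalent index arrangements, and one must check that the relabelling used to present conditions (3) and (4) in the symmetric form written in the statement is faithful (in particular that (3) and (4) really are the $\ad$- and $\ad^\dag$-versions and not accidentally the same identity). Everything else is a routine expansion, justified once by \Lref{ml} and by the identification $\mcL_0 = \Ccal_0^{(I)}$.
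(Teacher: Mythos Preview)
Your approach is essentially the paper's: reduce to basis elements via multilinearity, expand in structure constants, and read off the coefficient of $b_m$. There is one genuine slip, however. You assert that axioms (a)--(c$'$) are all multilinear, but axiom~(a), $[xx]\in\mcL_0$, is \emph{not} multilinear (the same variable appears twice), so \Lref{ml} does not apply to it directly. The paper handles this explicitly: for $x=\sum_i c_i b_i$ one expands
\[
[xx]=\sum_i c_i^2\,[b_ib_i]+\sum_{i\ne j} c_i c_j\big([b_ib_j]+[b_jb_i]\big),
\]
and observes that the first sum lies in $\mcL_0$ by condition~(1) and the second by condition~(2). Thus checking~(a) on basis elements alone does not suffice; one needs (1) \emph{and} (2) together to get~(a) for general~$x$. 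Once you insert this short computation, your argument matches the paper's.
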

\begin{proof}
The only axiom in \Dref{LieD}(i) which is not multilinear is (a),
which says for all $c_i\in C$ that $$ \left(\sum c_i b_i\right)^2= \sum _i c_i^2 b_i^2 +\sum c_{i}c_j(b_ib_j+b_jb_i)  \in \mcL_0,$$ implied by $b_i^2 \in \mcL_0$  and (b).

So we need $\sum _m c_{ii}^m b_m \in \Ccal _0,$ which means  $c_{ii}^m \in \Ccal _0$ for each $m\in I.$

We check all the other axioms on basis elements $b_i,b_j,b_k.$ Axiom (b) requires that $ \sum_{ m} (c_{i,j}^m+ c_{j,i}^m)b_m = b_ib_j+b_j b_i \in \Ccal _0,$
 so $c_{i,j}^m+ c_{j,i}^m \in \Ccal _0$ for each $m\in I.$

The Jacobi $\mcL_0$-identity reads as
$$
[[b_ib_j]b_k]+[[b_jb_k]b_i]+[[b_k,b_i]b_j]=
\sum _{l,m} (c_{ij}^l c_{lk}^m+   c_{jk}^lc_{li}^m+   c_{ki}^lc_{lj}^m)b_m,$$

so we need $\sum  _{l}( c_{ij}^lc_{lk}^m+  c_{jk}^lc_{li}^m+   c_{ki}^lc_{lj}^m)\in \Ccal _0$,
for each $i,j,k,m\in I.$

Similarly, $\dag$-reversibility means
$\sum _{l} (c_{ij}^lc_{lk}^m+  c_{kj}^lc_{li}^m+c_{ki}^lc_{lj}^m)\in \Ccal _0$ for each $i,j,k,m\in I.$
\end{proof}

In this way, any Lie pair is determined via the $\Lcal$ valued matrix
$$
([b_ib_j])_{1\leq i,j\leq n}
$$
such that all the diagonal elements $[b_ib_i]\in \Lcal_0$ and $[b_ib_j]+[b_jb_i]\in \Lcal_0$ ($\mcL_0$-skew symmetry).

\begin{definition}$ $
  \begin{enumerate}\eroman
\item  For
$V,W$ subsets of a bimagma $\mcL$, we define $[VW]$ to be the $\Ccal $-subspace of $\mcL$
generated by $\{ [vw]: v\in V, \ w\in W\}.$


\item An ideal of a Lie pair (with respect to the Lie bracket)
$(\mcL,\mcL_0)$~is also called a \textbf{Lie ideal}, for emphasis.

\end{enumerate}
\end{definition}
\begin{lemma} $ $\begin{enumerate}\eroman
\item Any bimagma sub-pair $(W,W_0)$ of a Lie pair $(\mcL,\mcL_0)$ is itself a Lie pair, which is $W_0$-reversible (resp.~$\dag$-reversible) if $(\mcL,\mcL_0)$ is  $\mcL_0$-reversible (resp.~$\dag$-reversible).

    \item $([W\mcL],[W\mcL]\cap \mcL_0)$ is a Lie ideal of $(\mcL,\mcL_0)$ for any ideal $W$ of $\mcL$.
 \item   Let $\mcL':= [\mcL\mcL]$.
 $(\mcL',\mcL\cap \mcL_0)$ is a Lie ideal of $(\mcL,\mcL_0)$ that satisfies Weak Property N.
\end{enumerate}
\end{lemma}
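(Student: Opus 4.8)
The plan is to handle the three parts separately, in each case leaning on the already-established machinery: the characterization of Lie pairs on bimagmas, the adjoint-map formulation of the axioms in \Dref{LieD}, and \Lref{var2} (which tells us pairs and surpassing relations behave well under homomorphisms and sub-objects).

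\emph{Part (i).} First I would note that $(W,W_0)$ is by hypothesis a bimagma sub-pair, so $W$ is closed under the Lie bracket and $W_0\subseteq W\cap\mcL_0$. The Lie bracket axioms of \Dref{LieD}(i) are all of the form ``a certain element of $\mcL$ lies in $\mcL_0$'' (or ``a certain sum implies a null sum''); since for $x,y\in W$ the relevant elements (e.g.\ $[xx]$, $[xy]+[yx]$, the Jacobi expression \eqref{spel}, and the sums $\sum_i\ad_y(x_i)$) already lie in $W$ and, by the axioms for $(\mcL,\mcL_0)$, in $\mcL_0$, they lie in $W\cap\mcL_0$. But we need them in $W_0$, not merely in $W\cap\mcL_0$. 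Here I would invoke that a \emph{sub-pair} in the sense of \Dref{basic2}(xii) only requires $W_0\subseteq W\cap\mcL_0$ — so to make the inclusions land in $W_0$ one takes the canonical choice $W_0=W\cap\mcL_0$, or one simply observes the statement is about the sub-pair being ``itself a Lie pair'' with its \emph{own} null set; I expect the intended reading is $W_0 = W\cap\mcL_0$, making every ``$\in\mcL_0$'' conclusion automatically ``$\in W_0$.'' Bilinearity and axioms (d), (e) restrict verbatim. For the reversibility clauses: if $(\mcL,\mcL_0)$ is $\mcL_0$-reversible and $[xy]\in W_0\subseteq\mcL_0$ then $[yx]\in\mcL_0$, and $[yx]\in W$, so $[yx]\in W\cap\mcL_0=W_0$; $\dag$-reversibility $\ad^\dag_x\ad_y=\ad_x\ad^\dag_y$ is an identity of operators on $\mcL$, hence holds on the submodule $W$.

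\emph{Part (ii).} Given an ideal $W$ of $\mcL$ (sub-semigroup closed under two-sided bracket-multiplication by all of $\mcL$), I would first check $[W\mcL]$ is an ideal of $\mcL$: it is a $\Ccal$-submodule by construction, and $[[W\mcL]\mcL]\subseteq[W\mcL]$ follows from the Jacobi identity modulo $\mcL_0$ together with $[W\mcL]\supseteq[[W\mcL]$-adjustments — more carefully, for $w\in W$, $x,z\in\mcL$ one has $[[wx]z]\in [x[wz]]+\mcL_0 + \dots$ by \eqref{spel}, and since $[wz]\in W$ (as $W$ is an ideal) each term on the right lies in $[W\mcL]$ or in $\mcL_0\cap[W\mcL]$ after adding null elements; this is the spot where one must be careful that the ``equality up to $\mcL_0$'' is bookkept correctly, and I would phrase it via the surpassing relation $\preceq_0$. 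Then $([W\mcL],[W\mcL]\cap\mcL_0)$ is a bimagma sub-pair, so by part (i) it is a Lie pair, and it is an ideal because $W$ is.

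\emph{Part (iii).} Take $W=\mcL$ in (ii) (note $\mcL$ is trivially an ideal of itself), giving that $\mcL':=[\mcL\mcL]$ with $\mcL'\cap\mcL_0$ is a Lie ideal. The new content is Weak Property N: for any generator $[xy]\in\mcL'$, axiom (b) gives $[xy]+[yx]\in\mcL_0$, and $[yx]\in\mcL'$ too, so $[yx]$ is the required ``$b'$'' with $[xy]+[yx]=[yx]+[xy]\in\mcL_0$ (commutativity of $+$); for a general element $\sum_k[x_ky_k]$ of $\mcL'$ one takes $b'=\sum_k[y_kx_k]\in\mcL'$ and uses that $\sum_k\big([x_ky_k]+[y_kx_k]\big)\in\mcL_0$, which holds because each summand does and $\mcL_0$ is closed under addition (it is an ideal, $(\mcL,\mcL_0)$ being $\mcL_0$-additive). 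The sum $b'+b = b+b'$ by commutativity.

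\emph{Main obstacle.} The one genuinely delicate point is part (ii): verifying that $[W\mcL]$ (and more generally that the Jacobi manipulations) stay inside the intended submodule \emph{modulo $\mcL_0$} rather than on the nose. Classically this is immediate from the Jacobi identity; here every ``$=$'' must be replaced by ``$\preceq_0$'' or ``differs by an element of $\mcL_0$,'' and one must confirm those error terms lie in $[W\mcL]\cap\mcL_0$ so that the pair closes up. I would organize that step by repeatedly applying \eqref{spel} and axiom (f) of \Dref{LieD}(i), tracking null terms, rather than by any new idea.
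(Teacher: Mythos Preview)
Your treatment of parts (i) and (iii) matches the paper's essentially line for line, including the implicit identification $W_0 = W\cap\mcL_0$ that you rightly flag; the paper simply writes ``$[xx]\in\mcL_0\cap W = W_0$'' and proceeds. For (iii) the paper is even terser than you are --- it just says Weak Property~N is ``clear since $[xy]+[yx]\in\mcL_0$'' and leaves the extension to sums implicit.

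Where you diverge is part (ii), and here you are making it harder than necessary. The paper does \emph{not} invoke the Jacobi identity at all. Its one-line argument is: since $W$ is an ideal of $\mcL$, every generator of $[W\mcL]$ already lies in $W$; hence for $w\in W$ and $x,y\in\mcL$ one has $[yw]\in W$, and therefore both $[[yw]x]$ and $[x[yw]]$ are brackets of an element of $W$ with an element of $\mcL$, landing in $[W\mcL]$. The containment $[W\mcL]\subseteq W$ is what does the work, and it is immediate from the definition of ideal. Your Jacobi manoeuvre --- rewriting $[[wx]z]$ via \eqref{spel} and then tracking $\mcL_0$-error terms --- is circuitous: you end up needing $[wz]\in W$ anyway, which is exactly the ideal property you could have applied directly at the outset to $[wx]$ itself. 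So the ``main obstacle'' you identify is not present in the intended argument; it is an artifact of choosing the Jacobi route instead of the one-step inclusion $[W\mcL]\subseteq W$.
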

   \begin{proof}
  (i) We show that $(W,W_0)$ is a Lie pair, by verifying the conditions of \Dref{LieD}(i), for $x,y,z \in W$.
   \begin{enumerate}
       \item   $[xx] \in \mcL_0 \cap W = W_0.$
        \item $[xy]+[yx] \in \mcL_0 \cap W = W_0.$
        \item Likewise the Jacobi $W_0$-identity (and its reflection) and $\ad_x =\ad_x^\dag$ for $x\in W_0$ are a fortiori, as well as $\dag$-reversibility.
        \item The other axioms are clear.
   \end{enumerate}

   (ii)  Clearly $[x[yw]],[[yw]x] \in [W\Lcal]$ for $w\in W.$

 (iii) Using (ii), we only need to verify the Weak Property N, which is clear since
 $[xy]+[yx]\in \mcL_0.$
 \end{proof}

\subsubsection{Negated Lie pairs}

 \begin{definition}
      A \textbf{negated Lie pair} is a Lie pair with a negation map
      (cf. Definition~\ref{negmap}), such that $[yx]=(-)[xy]$ for all $x,y$.
 \end{definition}

 \begin{remark}
   Negated  Lie pairs are rather restrictive. For example any negated  Lie pair is reflexive and  $\mcL_0$-symmetric.
 \end{remark}

\subsection{Lie pairs with a surpassing relation}\label{LieD70}

One could introduce a surpassing relation $\preceq$.

\begin{definition}\label{LieD7}$ $
\begin{enumerate}
    \item A $\preceq$-\textbf{Lie bracket} with regard to a surpassing relation $\preceq$ is a    $\mcL_0$-Lie bracket   also satisfying, for all $x,x_i,y\in \mcL$:

\begin{enumerate}
  \item (The
 Jacobi $\preceq$-identity) $\ad_{[xy]} \preceq \ad_x \ad_y+ \ad_y \ad^\dag _x$.
 \item $ \ad_{\sum _i x_i } \preceq  \sum _i \ad_{x_i}  ,$
 \item $\ad _x(\sum y_i)\preceq \sum \ad _x(y_i)$ and  $\ad ^\dag_x(\sum y_i)\preceq \sum \ad^\dag _x(y_i)$ for all $x,y_i \in \mcL$.
 \item $ \ad^\dag_{\sum _i x_i } \preceq  \sum _i \ad^\dag_{x_i}$.
 \item If $x \preceq y$, then $ \ad_x \preceq  \ad_ y$ and $\ad^\dag_x \preceq \ad^\dag_y$.
\end{enumerate}

  \item  A  $\preceq$-\textbf{weak Lie pair} is a  weak  Lie pair  $(\mcL,\mcL_0)$   with  a
surpassing relation~$\preceq$.

    \item  A  $\preceq$-\textbf{Lie pair} is a  Lie pair  $(\mcL,\mcL_0)$   with  a
surpassing relation~$\preceq$.
\end{enumerate}
\end{definition}

\begin{importantnote}
The classical Lie theory has equality holding in   (1), but
we find this too restrictive to obtain a workable algebraic theory for semialgebras.
 \end{importantnote}

\begin{remark}\label{spel3} Assume that $(\mcL,\mcL_0)$ is a reversible Lie pair.
One can rewrite Axiom~(1)(a) as  \begin{equation}
    \label{J1} [[xy]z]\preceq [x[yz]]+[y[zx]].
\end{equation}
\end{remark}
\begin{lemma}\label{LLieD} $ $
Assume that $(\mcL,\mcL_0)$ is $\mcL_0$-reversible (\Dref{LieD8}).
\begin{enumerate}\eroman

 \item     The Jacobi $\preceq$-identity also is  equivalent to each of:
\begin{enumerate}
 \item[(d$'$)]   $\ad _y \ad^\dag_x  \preceq  \ad _{[xy]}+ \ad _x \ad^\dag_y  $.

\item[(d$''$)]   $\ad^\dag_{[xy]} \preceq \ad^\dag_x \ad^\dag_y+ \ad^\dag_y \ad _x$.

\end{enumerate}
\item
If   $[[xy]z] + w = [x[yz]]+[y[zx]]$ for $w \in \mcL_0,$ then $$[z[yx]] + w = [[zy]x]+[[xz]y].$$ \end{enumerate}
\end{lemma}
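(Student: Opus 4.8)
The plan is to treat the two parts of Lemma~\ref{LLieD} separately, leaning on $\mcL_0$-reversibility throughout, which is what lets us pass between $\ad$ and $\ad^\dag$ statements modulo $\mcL_0$. For part (i), I would first recall that the Jacobi $\preceq$-identity (1)(a) of \Dref{LieD7} reads $\ad_{[xy]} \preceq \ad_x\ad_y + \ad_y\ad^\dag_x$, and that Lemma~\ref{LLieD}(i) already lists the two reformulations (d$'$) and (d$''$). The strategy is to show (1)(a) $\Rightarrow$ (d$''$) $\Rightarrow$ (d$'$) $\Rightarrow$ (1)(a), so that all three are equivalent. To get (d$''$), apply the reflected Jacobi $\mcL_0$-identity (axiom (c$'$) of \Dref{LieD}) together with (1)(a): combine $\ad^\dag_{[xy]}+\ad^\dag_y\ad_x + \ad_y\ad^\dag_x \in \End(\mcL,\mcL_0)_0$ with the surpassing inequality, using that $\preceq_0$-type manipulations are compatible with sums and with adding null elements (Definition~\ref{ddi}(i),(ii)), and with $\mcL_0$-reversibility to swap the order of the bracketed indices where needed. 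The passage (d$'$) $\leftrightarrow$ (d$''$) is again just relabelling $x\leftrightarrow y$ and invoking $\mcL_0$-reversibility on the left-hand bracket $[yx]$ versus $[xy]$.

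For part (ii), I would start from the hypothesis $[[xy]z] + w = [x[yz]]+[y[zx]]$ with $w\in\mcL_0$, and rewrite the claimed conclusion $[z[yx]] + w = [[zy]x]+[[xz]y]$ as an identity obtained by systematically applying $\dag$-reversibility, i.e.\ the translation \eqref{spel2}: $[[yz]x] = [x[zy]]$. The idea is that each of the three terms in the hypothesis is carried to one of the three terms in the conclusion by applying $\dag$-reversibility (or Axiom (b) of the Lie bracket, which gives $[uv] \equiv [vu]$ modulo $\mcL_0$) to the inner and outer brackets, and that the error terms introduced at each step all land in $\mcL_0$ and can therefore be absorbed into $w$. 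Concretely: $[[xy]z]$ becomes $[z[yx]]$ by $\dag$-reversing the outer bracket and then the inner one; $[x[yz]]$ becomes $[[zy]x]$ similarly; and $[y[zx]]$ becomes $[[xz]y]$. Because we are told the hypothesis is an honest equation (not merely a $\preceq_0$ relation), but the $\dag$-reversibility moves are equalities \eqref{spel2} while the skew-symmetry moves only hold modulo $\mcL_0$, I would be careful to use \eqref{spel2} wherever an exact equation is needed and track the discrepancy terms as a single element of $\mcL_0$ added to both sides, legitimate since $\mcL_0$ is a submodule.

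The main obstacle I anticipate is bookkeeping in part (ii): one must check that the particular permutation of $x,y,z$ relating the hypothesis to the conclusion is realized purely by $\dag$-reversibility equalities, with no leftover term that genuinely needs skew-symmetry (which would only give a congruence, not an equation, and could in principle change $w$ by something not a priori in $\mcL_0$ — though since $\mcL_0$ absorbs such terms this is harmless, it still must be stated correctly). A secondary subtlety in part (i) is making sure that when we apply a surpassing relation $\preceq$ to both sides of an $\mcL_0$-containment coming from axioms (c), (c$'$), we are using exactly the monotonicity properties in Definition~\ref{ddi}(i)–(ii) and the fact (Remark after \Dref{LieD7}, via $\End(\mcL,\mcL_0)$) that $\preceq$ on $\mcL$ induces $\preceq$ element-wise on endomorphisms; I would spell this induced comparison out once and then use it freely.
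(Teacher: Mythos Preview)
Your plan for part~(ii) is essentially the paper's proof: apply $\dag$-reversibility \eqref{spel2} to each of the three terms, which turns the hypothesis into the conclusion as a chain of genuine equalities with the same $w$. No skew-symmetry step is needed, so the worry you flag about changing $w$ does not arise.

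For part~(i), however, your route is both more complicated than the paper's and has a gap. The paper does not use axiom~(c$'$) at all. It works at the element level with \eqref{J1}, $[[xy]z]\preceq [x[yz]]+[y[zx]]$, and simply relabels variables: swapping $y\leftrightarrow z$ (and using $\dag$-reversibility to match the bracket shapes) gives~(d$'$); reversing each bracket and then swapping $x\leftrightarrow y$ gives~(d$''$). The reverse relabellings give the converse directions. Your proposed step, ``combine $\ad^\dag_{[xy]}+\ad^\dag_y\ad_x+\ad_y\ad^\dag_x\in\End(\mcL,\mcL_0)_0$ with the surpassing inequality,'' does not go through: from $A+B+C\in\mcL_0$ you only obtain $0\preceq A+B+C$, and without negation there is no way to peel off summands to deduce $A\preceq(\text{something})$. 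So as written your cycle (1)(a)$\Rightarrow$(d$''$) stalls at the first implication. Replace this with the paper's direct variable swap in \eqref{J1}.
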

\begin{proof} Use Equation \eqref{J1} throughout.

(i) To obtain Axiom (d$'$) switch $y$ and $z.$ The reverse argument gives us \eqref{J1}  from (d$'$).

 To obtain Axiom (d$''$), plug \Dref{LieD8}(ii)  into each term of \eqref{J1}, and then exchange $x$ and~$y$. The reverse argument gives us \eqref{J1} from~(d$''$).

(ii)
When $[[xy]z] + w = [x[yz]]+[y[zx]]$ for $w \in \mcL_0,$ $$[z[yx]] + w =[[xy]z] +w = [x[yz]]+[y[zx]] = [[zy]x]+[[xz]y].$$
\end{proof}

\begin{remark}
Axioms (d$'$) and (d$''$) remain consistent when $\mcA_0$-symmetry holds.
\end{remark}
\subsection{Categories involving Lie  pairs}

There are three natural kinds  of morphisms of Lie  pairs,
each of which defines its category.

\begin{definition}\label{morp} A $\Ccal $-module homomorphism $f: (\mathcal L,\mcL_0)\to (\mathcal N,
\mcN_0)$ of Lie pairs  is a \textbf{Lie bracket map} if $f(\mcL_0)\subseteq \mcN_0$ and $f([b_1  b_2 ]) = [f(b_1)f(  b_2)],$ for all $b_1,b_2\in \mcL$. A Lie bracket map $f$ is a \textbf{weak Lie morphism}, $\preceq$-\textbf{Lie morphism}, resp.~\textbf{Lie homomorphism}, if $f$ is a weak  morphism, resp.~$\preceq$-morphism, resp.~homomorphism.
    \end{definition}

  \begin{lemma}$ $\begin{enumerate}\eroman
     \item The Lie   pairs and their weak Lie   morphisms
comprise a category.

   \item The $\preceq$-Lie    pairs  and their Lie    $\preceq $-morphisms
comprise a subcategory of (i).

  \item
The Lie    pairs and their Lie   homomorphisms
comprise a subcategory of (ii).
  \end{enumerate}
\end{lemma}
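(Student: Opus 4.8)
The plan is to verify the three category axioms---existence of identity morphisms, closure under composition, and associativity of composition---for each of the three classes of morphisms, and then check that each successive class is genuinely contained in the previous one, so that (ii) and (iii) are full subcategories (in fact they share the same objects up to the extra structure, so ``subcategory'' here is with respect to a forgetful functor discarding $\preceq$). Associativity of composition is automatic since the underlying maps are ordinary functions between sets and function composition is associative; likewise the identity map $\mathrm{id}_\mcL$ on a Lie pair is a $\Ccal$-module homomorphism, preserves $\mcL_0$, preserves the bracket on the nose, and is simultaneously a homomorphism, a $\preceq$-morphism, and a weak morphism, so it serves as the identity in all three categories. Thus the only real content is closure under composition in each class.

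For (i), I would take weak Lie morphisms $f:(\mcL,\mcL_0)\to(\mcN,\mcN_0)$ and $g:(\mcN,\mcN_0)\to(\mcP,\mcP_0)$ and check $g\circ f$ is a Lie bracket map that is a weak morphism. It is a $\Ccal$-module map as a composite of module maps; $gf(\mcL_0)\subseteq g(\mcN_0)\subseteq\mcP_0$; and $gf([b_1b_2])=g([f(b_1)f(b_2)])=[gf(b_1)\,gf(b_2)]$ using that each is a Lie bracket map. For the weak-morphism property: if $\sum b_i\in\mcL_0$ then $\sum f(b_i)\in\mcN_0$ since $f$ is a weak morphism, hence $\sum g(f(b_i))\in\mcP_0$ since $g$ is; this uses only \Dref{symsyst}(ii). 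So (i) is a category. For (ii), I compose two Lie $\preceq$-morphisms: the Lie-bracket-map and $\mcL_0$-preservation parts are as in (i), and the $\preceq$-morphism conditions $gf(b_1+b_2)\preceq gf(b_1)+gf(b_2)$ and ``$b_1\preceq b_2\Rightarrow gf(b_1)\preceq gf(b_2)$'' follow by applying \Dref{precmor} to $f$ then to $g$, using transitivity of the preorder $\preceq$ together with \Dref{ddi}(i) (which lets us pass $f(b_1+b_2)\preceq f(b_1)+f(b_2)$ through the order-preserving $g$ to get $gf(b_1+b_2)\preceq g(f(b_1)+f(b_2))\preceq gf(b_1)+gf(b_2)$). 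That $\preceq$-Lie pairs with Lie $\preceq$-morphisms form a \emph{subcategory} of (i) is exactly \Lref{mor1}: every $\preceq$-morphism is a weak morphism, so every Lie $\preceq$-morphism is a weak Lie morphism, and identities match. For (iii), the composite of two Lie homomorphisms is a $\Ccal$-module homomorphism preserving $+$, $\cdot$ (i.e. the bracket), and sending $\mcL_0$ into $\mcP_0$, all by routine composition; and a homomorphism in the sense of \Dref{symsyst}(ii)(a) preserving addition is in particular a $\preceq$-morphism with respect to $\preceq_0$ (since $f(b_1+b_2)=f(b_1)+f(b_2)$ gives $f(b_1+b_2)\preceq_0 f(b_1)+f(b_2)$ via $z=0$, and $b_1\preceq_0 b_2$, say $b_2=b_1+z_0$ with $z_0\in\mcL_0$, gives $f(b_2)=f(b_1)+f(z_0)$ with $f(z_0)\in\mcN_0$, so $f(b_1)\preceq_0 f(b_2)$), hence (iii) sits inside (ii).

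The step most deserving of care---and the only place a subtlety lurks---is the claim that (ii) and (iii) are subcategories rather than merely classes of maps: one must confirm that the class of objects is stable, i.e. that composing two morphisms does not force extra structure, and that the inclusions ``$\preceq$-morphism $\Rightarrow$ weak morphism'' (\Lref{mor1}) and ``homomorphism $\Rightarrow$ $\preceq_0$-morphism'' are honest so the identity-on-objects inclusion functors are well defined. I would also note explicitly, to be safe, that composition of weak morphisms does not require any additional hypothesis such as $\mcL_0$-additivity because the defining condition of \Dref{symsyst}(ii)(b) quantifies over finite sums and is visibly transitive under composition. Everything else is a direct unwinding of the definitions, so I would present the proof compactly by first disposing of identities and associativity in one sentence, then checking closure under composition for each of the three classes, and finally invoking \Lref{mor1} and the $\preceq_0$-remark for the two inclusions.
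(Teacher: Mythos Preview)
Your proposal is correct and follows essentially the same approach as the paper: verify closure under composition directly from the definitions and invoke \Lref{mor1} for the inclusion of (ii) in (i). The paper's proof is a two-sentence sketch (``one checks easily\ldots the other assertions are by Lemma~\ref{mor1}''), and you have simply unpacked that sketch in full detail; in particular, your explicit argument that a Lie homomorphism is a $\preceq_0$-morphism spells out the inclusion (iii)~$\subseteq$~(ii), which the paper's appeal to \Lref{mor1} does not literally cover.
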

  \begin{proof}
  One checks easily that the composition of two Lie homomorphisms is a Lie homomorphism, and likewise for weak Lie  morphisms and  Lie  $\preceq$-morphisms. The other assertions are by Lemma~\ref{mor1}.
  \end{proof}

 \begin{importantnote}
      Although Lie homomorphisms are the definition from  universal algebra, the first category, using weak Lie morphisms, fits best into the general theory of  pairs.
 \end{importantnote}

 \begin{example}\label{pro}
 Suppose $(\mcL,\mcL_0)$ is any Lie pair, and $\mcL_0\subset \mcL_1 \subset \mcL$. Then $(\mcL,\mcL_1)$
 also is a  Lie pair, and the identity map can be viewed as a Lie homomorphism   from $(\mcL,\mcL_0)$ to~$(\mcL,\mcL_1)$.
 Likewise for weak Lie pairs and $\preceq$-Lie pairs.
 \end{example}

\section{Lie pair constructions}\label{subsec:MLC}

In this section we show how  celebrated examples of Lie theory can be generalized to Lie pairs.

\subsection{$\psi$-Lie pairs from associative and pre-Lie $\vep$-pairs}\label{vepp}

In the classical theory of Lie algebras one knows that for each associative algebra (and more generally for pre-Lie algebras \cite{Bai}), the additive commutator makes it into a Lie algebra. In our situation, this cannot work since we do not have negation.
 Nevertheless, there is an analogous procedure for pairs.

 \begin{definition} $ $\begin{enumerate}\eroman
     \item A $\psi$-\textbf{Lie pair} is a  Lie pair having a pre-negation map $\psi$.

 \item A \textbf{strong} $\psi$-\textbf{Lie pair} is a
    $\psi$-{Lie pair}  satisfying $[yx] = \psi([xy])$ for each $x,y\in \mcL.$

\item For $\vep\in C$, an $\vep$-\textbf{Lie pair} is a  Lie pair  satisfying   $x+\vep x \in \mcL_0$ for all $x \in \mcL$ and $[xy]+[yx]\in (1+\vep)\Lcal\subseteq \Lcal_0$.

      \item  For $\vep\in C$, a \textbf{strong}  $\vep$-\textbf{Lie pair} is a $\vep$-\textbf{Lie pair}   satisfying $[yx] = \vep([xy])$ for each $x,y\in \mcL$.
 \end{enumerate}
 \end{definition}

When $\mcL_0 = (\one +\vep)\mcL$ for $\vep \in C$, (iii), (iv)  are special cases of (i),(ii) respectively, taking $\psi$ to be the map $x \mapsto \vep x$.

 \begin{lemma}
      Any strong $\psi$-Lie pair $(\mcL,\mcL_0)$ is $\dag$-reversible, and is $\mcL_0$-reversible.
 \end{lemma}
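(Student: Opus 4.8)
The plan is to unwind the definitions and use the defining identity $[yx]=\psi([xy])$ of a strong $\psi$-Lie pair together with the fact that $\psi$ is a semigroup endomorphism that commutes with multiplication in the appropriate way (\Dref{pnm}(i)). First I would prove $\dag$-reversibility, i.e., $\ad^\dag_x\ad_y = \ad_x\ad^\dag_y$ for all $x,y\in\mcL$. Evaluating on an arbitrary $z\in\mcL$, the left side is $\ad^\dag_x\ad_y(z) = [\,[yz]\,x\,]$ and the right side is $\ad_x\ad^\dag_y(z)=[\,x\,[zy]\,]$; so the claim is $[[yz]x] = [x[zy]]$. Applying the strong relation twice, $[x[zy]] = \psi([[zy]x])$ and $[zy] = \psi([yz])$, so using \Dref{pnm}(i) (which lets $\psi$ be pulled out of either bracket slot) I get $[x[zy]] = \psi([\psi([yz])\,x]) = \psi(\psi([[yz]x])) $. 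Here I would need $\psi^2 = \mathrm{id}$; but a \emph{strong $\psi$-Lie pair} is in particular a $\psi$-Lie pair (a Lie pair with a pre-negation map), and I would check whether the paper intends $\psi$ to be a negation map (order $\le 2$) — if so, $\psi^2=\mathrm{id}$ gives $[x[zy]] = [[yz]x]$ as desired. If only the weaker pre-negation axioms are available, one instead argues modulo $\mcL_0$: $b+\psi(b)\in\mcL_0$ forces $[[yz]x]$ and $\psi^2([[yz]x])$ to be congruent mod $\mcL_0$, and combined with the bilinearity axioms of \Dref{LieD}, equality on the nose follows from the structure; I expect the clean statement uses that $\psi$ is of order $\le 2$.

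Next I would derive $\mcL_0$-reversibility, i.e., $[xy]\in\mcL_0$ implies $[yx]\in\mcL_0$. This is immediate from the strong relation: $[yx] = \psi([xy])$, and $\psi(\mcL_0)\subseteq\mcL_0$ by \Dref{pnm}(iii), so $[xy]\in\mcL_0$ gives $[yx]=\psi([xy])\in\mcL_0$. Alternatively, one can invoke the last item of the Important Note after \Dref{LieD8}: if $\mcL_0$-elimination holds then $\mcL_0$-reversibility is automatic from $[xy]+[yx]\in\mcL_0$; but the direct argument via $\psi$ does not need elimination and is the one I would present.

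The only genuine obstacle is the first part: making precise the passage $\psi(\psi([[yz]x])) = [[yz]x]$. If the convention is that a strong $\psi$-Lie pair carries a \emph{negation map} $\psi$ (order $\le 2$), the identity is a one-line calculation; if $\psi$ is merely a pre-negation map, I would either add the hypothesis $\psi^2=\mathrm{id}$ or run the whole argument inside the surpassing relation $\preceq_\psi$, noting that $\psi$ is $\preceq_\psi$-monotone and that $b\preceq_\psi \psi^2(b)\preceq_\psi b$ collapses to equality. I would phrase the final proof in the first of these ways for brevity:

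Let $x,y,z\in\mcL$. By the strong relation applied twice and \Dref{pnm}(i),
$$
[x[zy]] = \psi\bigl([[zy]x]\bigr) = \psi\bigl([\psi([yz])\,x]\bigr) = \psi^2\bigl([[yz]x]\bigr) = [[yz]x],
$$
using $\psi^2 = \mathrm{id}$ in the last step; this is exactly \eqref{spel2}, so $(\mcL,\mcL_0)$ is $\dag$-reversible. For $\mcL_0$-reversibility, if $[xy]\in\mcL_0$ then $[yx] = \psi([xy])\in\psi(\mcL_0)\subseteq\mcL_0$ by \Dref{pnm}(iii). $\square$
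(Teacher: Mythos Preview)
Your proof is correct and follows essentially the same route as the paper's: apply the strong relation twice together with the pre-negation axiom $\psi([ab])=[\psi(a)b]=[a\psi(b)]$ to reduce $\dag$-reversibility to $\psi^2$ acting trivially, and use $\psi(\mcL_0)\subseteq\mcL_0$ for $\mcL_0$-reversibility.

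One point worth sharpening: your worry about whether $\psi^2=\mathrm{id}$ is unnecessary, and you do not need to assume $\psi$ is a negation map or pass to $\preceq_\psi$. The strong relation itself gives $\psi^2$ as the identity on all brackets: for any $a,b\in\mcL$, $\psi^2([ab])=\psi([ba])=[ab]$. Since $[[yz]x]$ is a bracket, your final step $\psi^2([[yz]x])=[[yz]x]$ is already justified. The paper's proof uses exactly this observation (writing $[[xy]z]=\psi(\psi[[xy]z])$ as its first step), so once you make this explicit your argument and the paper's coincide.
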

 \begin{proof}
        $$[[xy]z] = \psi(\psi[[xy]z] )= [[\psi(x)y]\psi(z)] = [z[yx]], $$
  so   $\dag$-reversibility holds.

  If $[xy] \in \mcL_0,$ then $[yx] = \psi([xy])\in \mcL_0.$\qed
 \end{proof}
 \begin{theorem}\label{pLie} Given a semiring $\psi$-pair $(\Rcal,\Rcal_0)$,
  define $(\Rcal,\Rcal_0)_\psi := (\Rcal,\Rcal_0),$  endowed with the Lie bracket   defined by
$[xy]_\psi  =xy+\psi( y)x$. Then $(\Rcal,\Rcal_0)_\psi $ is a $\psi$-Lie pair.
Moreover, $(\Rcal,\Rcal_0)_\psi $ is a strong $\psi$-Lie pair when   $\psi^2 = \one_\Rcal.$
 \end{theorem}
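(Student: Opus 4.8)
The plan is to verify directly that $[xy]_\psi = xy + \psi(y)x$ satisfies all the Lie bracket axioms of \Dref{LieD}(i), using only the defining properties of the pre-negation map $\psi$ (\Dref{pnm}): that $\psi$ is a semigroup endomorphism, $\psi(bb') = b\psi(b') = \psi(b)b'$, $b + \psi(b) \in \Rcal_0$ for all $b$, and $\psi(\Rcal_0) \subseteq \Rcal_0$. Since $(\Rcal,\Rcal_0)$ is a semiring $\psi$-pair, we also know $\Rcal_0$ is an ideal and that $\sum b_i \in \Rcal_0$ implies $\sum b b_i \in \Rcal_0$ and $\sum b_i b \in \Rcal_0$. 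First I would establish $C$-bilinearity of $[\;]_\psi$, which is immediate since multiplication in $\Rcal$ is $C$-bilinear and $\psi$ is $C$-linear (being a semigroup endomorphism compatible with the $C$-action through $\psi(cb') = \psi(b)\cdot$ scalars — more carefully, $\psi(cb) = c\psi(b)$ follows from $\psi(cb) = \psi((cb)\cdot 1)$-type manipulations, or one simply notes $\psi$ commutes with scalar multiplication as part of being a module endomorphism in this setting).

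The key computations are axioms (a), (b), and the Jacobi identity (c). For (a): $[xx]_\psi = xx + \psi(x)x = (x + \psi(x))x \in \Rcal_0$, since $x + \psi(x) \in \Rcal_0$ and $\Rcal_0$ absorbs right multiplication. For (b): $[xy]_\psi + [yx]_\psi = xy + \psi(y)x + yx + \psi(x)y$; I would group this as $(x + \psi(x))y + (\psi(y) + y)x$ — wait, more carefully as $\bigl(xy + \psi(x)y\bigr) + \bigl(yx + \psi(y)x\bigr) = (x+\psi(x))y + (y+\psi(y))x$, and both summands lie in $\Rcal_0$. For the Jacobi identity in the form \eqref{spel}, one expands $[[xy]_\psi z]_\psi + [x[yz]_\psi]_\psi + [y[zx]_\psi]_\psi$ as a sum of associative monomials in $x,y,z$ (each with $\psi$ applied to zero, one, or two factors), and shows the whole sum collapses into $\Rcal_0$ by repeatedly extracting factors of the form $b + \psi(b)$ — this is the tropical shadow of the usual cancellation of the eight terms in the classical Jacobi identity. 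Axiom $(c')$ follows symmetrically (or via \Rref{spel0}(ii) once one checks $\ad_x\ad_y = \ad^\dag_y\ad^\dag_x$ in this model). Axioms (e) and (f) and the ``Lie pair'' closure conditions ($\sum x_i \in \Rcal_0 \Rightarrow \sum \ad_{x_i}, \sum \ad^\dag_{x_i} \in \End(\mcL,\mcL_0)_0$) follow from $C$-bilinearity together with the ideal property of $\Rcal_0$ and $\psi(\Rcal_0)\subseteq \Rcal_0$.

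For the pre-negation map itself: $\psi$ is already a semigroup endomorphism of $\Rcal$ with $b + \psi(b) \in \Rcal_0$ and $\psi(\Rcal_0)\subseteq\Rcal_0$, so the only new thing to check is that $\psi$ is compatible with the \emph{Lie} bracket in the sense needed for $(\Rcal,\Rcal_0)_\psi$ to be a $\psi$-Lie pair, i.e.\ $\psi([xy]_\psi) = x[\psi y]_\psi$-type identities — concretely $\psi([xy]_\psi) = \psi(xy + \psi(y)x) = \psi(x)y + \psi(\psi(y))x = \psi(x)y + \psi^2(y)x$. When $\psi^2 = \one_\Rcal$ this equals $\psi(x)y + yx = [yx]_\psi$ (using $\psi(x)y = xy$? no — rather directly $[yx]_\psi = yx + \psi(x)y$), giving the ``strong'' condition $[yx]_\psi = \psi([xy]_\psi)$. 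In general, without $\psi^2 = \one$, one still gets a valid pre-negation structure since the $\psi$-Lie pair axioms only require $\psi(bb') = b\psi(b') = \psi(b)b'$, which transfers to the bracket formally.

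\textbf{Main obstacle.} The bookkeeping in the Jacobi identity is where the real work lies: expanding all three nested brackets produces a sum of roughly a dozen associative monomials (with various placements of $\psi$), and one must show this sum lies in $\Rcal_0$ not by term-wise cancellation (impossible without negation) but by reorganizing into a sum of products each containing a factor $b + \psi(b)$ for some subword $b$. Getting the grouping exactly right — and confirming that the associativity of $\Rcal$'s multiplication together with $\psi(b)b' = b\psi(b')$ suffices to perform this reorganization — is the delicate step; everything else is routine verification of closure and bilinearity conditions.
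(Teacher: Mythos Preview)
Your proposal is correct and follows essentially the same approach as the paper: direct verification of axioms (a), (b), (c), $(c')$ by expanding the bracket and regrouping the associative monomials into expressions of the form $b+\psi(b)$ (the paper in fact collects the entire Jacobi sum into a single $\psi\bigl(w+\psi(w)\bigr)$, which is your idea carried out globally rather than term by term). One small caution: your alternative route for $(c')$ via \Rref{spel0}(ii) requires $\dag$-reversibility, which is only established once $\psi^2=\one_\Rcal$ (cf.\ the lemma following \Dref{LieD8}), so in the general case use your ``follows symmetrically'' option, exactly as the paper does.
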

\begin{proof}
We verify the axioms in \Dref{LieD}(i).

\begin{enumerate}
    \item[(a)] $[xx]=x^2+\psi( x)x= x^2 +\psi( x^2)\in \Rcal_0.$

    \item[(b)] $[xy]+[yx]=xy+\psi( y)x+yx+\psi( x)y=xy+yx +\psi(xy+yx)\in \Rcal_0.$

    \item[(c)]
        $\begin{aligned}[t]
            [[xy]z]{ }&{ }+[[yz]x]+[[zx]y] \\
            &= (xy+\psi( y)x)z+\psi( z)(xy+\psi( y)x)+(yz+\psi( z)y)x \\
            &{\quad}+ \psi( x)(yz +\psi( z)y) + (zx+\psi( y)x)y+\psi( y)(zx+\psi( x)z)\\
            &= \psi( xy  z)+  yxz+\psi( zxy)+\psi^2 (zyx)+ yzx+\psi( zyx)\\
            &{\quad}+\psi( xyz)+\psi^2( xzy)+ zxy+\psi( xzy)+\psi( yzx)+\psi^2( yxz)\\
            &=\psi \big((xyz+yzx+zyx)+\psi(zyx+xzy+yxz)\big)\\
            &=\psi \big([xy]z+[yz]x+[zx]y +\psi([xy]z+[yz]x+[zx]y)\big)\in \Rcal_0.
        \end{aligned}$

    \item[(c$'$)] is analogous, and (d),(e) are easy.
\end{enumerate}

When $\psi^2 = \one_\Rcal$, we have $[yx] = yx+\psi( x)y  =   \psi(xy+\psi( y)x) = \psi([xy])$.
\end{proof}

\begin{remark}$ $
    \begin{enumerate}\eroman
        \item For instance, for a $\Ccal$-semialgebra $\Rcal$,  pick any element $\varepsilon \in \Ccal$, and define  $\Ccal_0 = \Ccal(\one +\vep) $  and
  $\Rcal_0 = (\one +\vep)\Rcal $. Then $(\Rcal,\Rcal_0)$ is a semiring pair satisfying the hypothesis of the theorem, taking the pre-negation $\psi$ to be $b \mapsto \vep b.$

  \item As an example for when  $\mcA_0$-reversibility   holds, in
any bimagma with a negation map $ (-) $, we write $[b,b']$ for the
\textbf{Lie commutator} $bb' (-) b'b.$
    \end{enumerate}
\end{remark}
  \begin{corollary}\label{Pois1}
      Any semiring
   pair $(\mcA,\mcA_0)$ with a negation map $(-)$ becomes a negated Lie
 pair, denoted by $(\mcA,\mcA_0)^{(-)}$,
  under the Lie product $[bb'] = [b,b']$, which also satisfies  $\dag$-reversibility and $\mcA_0$-reversibility.
  \end{corollary}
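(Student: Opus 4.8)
The plan is to deduce this directly from Theorem~\ref{pLie}, taking the pre-negation map $\psi$ there to be the given negation map $(-)$. First I would note that a semiring pair equipped with a negation map is in particular a semiring $\psi$-pair in the sense required by Theorem~\ref{pLie}: by \Dref{negmap} a negation map is a pre-negation map, and it has order $\le 2$, so $\psi^2 = \one_\mcA$. Thus the hypotheses of Theorem~\ref{pLie}, including the extra condition $\psi^2 = \one_\mcA$ needed for the ``strong'' conclusion, are met.

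Next I would identify the bracket. Using the pre-negation multiplicativity $\psi(bb') = \psi(b)b' = b\,\psi(b')$ from \Dref{pnm}, for $x = b$, $y = b'$ we get $[xy]_\psi = xy + \psi(y)x = bb' + (-)(b'b) = bb'\,(-)\,b'b = [b,b']$, so the bracket $[xy]_\psi$ of Theorem~\ref{pLie} is exactly the Lie commutator. Hence $(\mcA,\mcA_0)^{(-)}$ as defined in the statement coincides with $(\mcA,\mcA_0)_\psi$, which by Theorem~\ref{pLie} is a $\psi$-Lie pair, and in fact a strong $\psi$-Lie pair since $\psi^2 = \one_\mcA$.

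It then remains to read off the three asserted properties. Because $\psi = (-)$ is a genuine negation map, the underlying $\psi$-Lie pair is a Lie pair carrying a negation map, and the ``strong'' condition reads $[yx] = \psi([xy]) = (-)[xy]$; this is precisely the definition of a negated Lie pair. Finally, $\dag$-reversibility and $\mcA_0$-reversibility are immediate from the lemma stated just before Theorem~\ref{pLie}, which guarantees exactly these two properties for any strong $\psi$-Lie pair.

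There is essentially no obstacle here beyond bookkeeping on top of Theorem~\ref{pLie} and that lemma; the only step requiring a moment's care is the identification $[b,b'] = [xy]_\psi$, which rests on the pre-negation identity $\psi(b)b' = \psi(bb')$. Once that identification is in place, the whole corollary is a specialization of results already established.
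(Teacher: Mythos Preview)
Your proof is correct and follows essentially the same approach as the paper: the paper's proof is the single line ``Take $\vep = (-)\one$,'' which amounts to applying Theorem~\ref{pLie} with $\psi$ being the negation map (equivalently, multiplication by $(-)\one$), exactly as you do. You have simply spelled out the details that the paper leaves implicit, including the invocation of the lemma on strong $\psi$-Lie pairs for $\dag$-reversibility and $\mcA_0$-reversibility.
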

  \begin{proof}
  Take $\vep = (-)\one$.
  \end{proof}

We also can obtain a $\preceq$-version, by  extending the \textbf{Leibniz $\preceq$-identities} given in
\cite[Lemma~2.35]{Row16} to a pre-negation map $\psi$ cf.~\Dref{pnm}:

\begin{remark}
    If $\psi$ is a  pre-negation map on a semiring, then $$\psi (x_1 \dots x_n) = x_1 \dots x_{i-1} \psi(x_{i})x_{i+1}\dots x_n$$ for all $i$, by induction on $n.$
\end{remark}
\begin{lemma}[\textbf{Leibniz $\psi$-identities}]\label{Leib1}  In any semiring $\mcA,$ defining $[x,y]_\psi  = xy+\psi( yx)$.
\begin{enumerate}\eroman
    \item $
[x,y]_\psi  z +y[x,z]_\psi =[x,yz]_\psi  +yxz +\psi(yxz)  .$
\item $
z[x,y]_\psi  + [x,z]_\psi  y =[x,zy]_\psi  +zxy +\psi(zxy)  .$

\item
 $ [x,[yz]_\psi ]_\psi  +yxz+zxy+  \psi(yxz+zxy) = [ [x,y]_\psi ,
z]_\psi  +[y,[x,z]_\psi ]_\psi   .$
In particular, $ [x,[y,z]_\psi ]_\psi \preceq_\psi [ [x,y]_\psi ,
z]_\psi +[y,[x,z]_\psi ]_\psi   .$
\end{enumerate}
 \end{lemma}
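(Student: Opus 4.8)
The plan is to prove the three identities of Lemma~\ref{Leib1} by direct computation, exploiting the key structural fact recalled in the preceding remark: if $\psi$ is a pre-negation map on a semiring, then $\psi$ absorbs into any one factor of a product, i.e.\ $\psi(x_1\cdots x_n) = x_1\cdots\psi(x_i)\cdots x_n$, and in particular $\psi$ is additive and $\psi^2$ need not be the identity but $\psi(\psi(w)) = \psi(w)$ is \emph{not} assumed — so I must be careful to track every occurrence of $\psi$ and not cancel $\psi^2$. The definition in play is $[x,y]_\psi = xy + \psi(yx)$, which is the ``commutator'' relative to $\psi$ rather than to a genuine negation map.

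For part (i), I would expand both sides of $[x,y]_\psi z + y[x,z]_\psi$ using the definition and distributivity. The left side becomes $xyz + \psi(yx)z + yxz + y\psi(zx)$. Using the absorption remark, $\psi(yx)z = \psi(yxz)$ and $y\psi(zx) = \psi(yzx)$, so the left side is $xyz + \psi(yxz) + yxz + \psi(yzx)$. On the other hand $[x,yz]_\psi + yxz + \psi(yxz) = x(yz) + \psi((yz)x) + yxz + \psi(yxz) = xyz + \psi(yzx) + yxz + \psi(yxz)$. The two expressions agree term for term, giving (i) as an \emph{equality}. Part (ii) is the mirror image, expanding $z[x,y]_\psi + [x,z]_\psi y = zxy + z\psi(yx) + xzy + \psi(zx)y = zxy + \psi(zyx) + xzy + \psi(zxy)$, and comparing with $[x,zy]_\psi + zxy + \psi(zxy) = xzy + \psi(zyx) + zxy + \psi(zxy)$; again these match termwise.

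For part (iii), the cleanest route is to combine (i) and (ii) rather than expand everything from scratch. I would start from $[[x,y]_\psi, z]_\psi + [y,[x,z]_\psi]_\psi$ and use (i) in the form $[x,yz]_\psi = [x,y]_\psi z + y[x,z]_\psi - yxz - \psi(yxz)$ — except we have no subtraction, so instead I should work additively: by (i), $[x,y]_\psi z + y[x,z]_\psi$ and $[x,yz]_\psi + yxz + \psi(yxz)$ are equal, and similarly by (ii) with the roles adjusted. The symmetric combination of (i) and (ii) (adding the two, or applying them to the nested bracket $[y,z]_\psi$ in the appropriate slots) produces exactly $[x,[y,z]_\psi]_\psi + yxz + zxy + \psi(yxz + zxy)$ on one side and $[[x,y]_\psi,z]_\psi + [y,[x,z]_\psi]_\psi$ on the other. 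The ``in particular'' clause is then immediate: since $yxz + zxy + \psi(yxz+zxy) \in \mcA_0$ (it is of the form $w + \psi(w)$, which lies in $\mcA_0$ by the pre-negation axiom), the surpassing relation $\preceq_\psi$ — defined by $a \preceq_\psi b$ iff $b = a + (c + \psi(c))$ for some $c$ — gives $[x,[y,z]_\psi]_\psi \preceq_\psi [[x,y]_\psi,z]_\psi + [y,[x,z]_\psi]_\psi$.

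The main obstacle, and it is a mild one, is bookkeeping: keeping the six monomials $xyz, xzy, yxz, yzx, zxy, zyx$ and their $\psi$-images straight across the expansion in (iii), and resisting the temptation to simplify $\psi\circ\psi$ (which is illegitimate for a general pre-negation map). The safeguard is to reduce (iii) entirely to (i) and (ii) by substituting the nested bracket into the correct slot of each, so that the only genuinely new verification is the arithmetic of assembling the two identities — and then to invoke the pre-negation axiom (ii) of \Dref{pnm} to place the leftover term $yxz + zxy + \psi(yxz+zxy)$ in $\mcA_0$ for the $\preceq_\psi$ conclusion.
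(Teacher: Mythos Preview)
Your proposal is correct and mirrors the paper's proof exactly: the paper computes (i) by the same direct expansion you give, dispatches (ii) ``by symmetry,'' and for (iii) simply writes ``Add (i) to (ii),'' which is precisely your plan of combining the two identities rather than expanding from scratch. Your treatment is in fact more detailed than the paper's, particularly in spelling out why the residual term $yxz+zxy+\psi(yxz+zxy)$ lies in $\mcA_0$ and hence yields the $\preceq_\psi$ conclusion.
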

 \begin{proof}
 (i) As in \cite[Lemma~2.35]{Row16}, we compute:
 $$[x,y]_\psi  z +y[x,z]_\psi  = (xy+\psi( yx))z + y(xz + \psi( z)x) = xyz +\psi( yzx) + yxz +\psi(yxz) .$$

 (ii) By symmetry.

 (iii) Add (i) to (ii).

 \end{proof}

\subsubsection{Pre-Lie $\psi$-pairs}

The construction of Lie algebras from pre-Lie algebras  also can be extended to Lie pairs. Recall that $\psi$ is a pre-negation map.

\begin{definition}\label{ass} The $\psi$-\textbf{associator} in a $C$-bimagma is given by $$(x,y,z)_\psi  :=  (xy)z + \psi( x(yz)).$$
An $\mcA_0$-additive bimagma pair $(\mcA,\mcA_0)$ is a \textbf{pre-Lie $\psi$-pair} if $(x,y,z)_\psi +\psi((x,z,y)_\psi)  \in \mcA_0$ for all $x,y,z \in \mcA.$
\end{definition}

\begin{theorem}\label{preLir}
Any $\mcA_0$-additive pre-Lie $\psi$-pair $(\mcA,\mcA_0)$ becomes an  $\psi$-Lie pair under the Lie bracket $[xy]_\psi  :=xy+\psi( yx).$

If $\psi$ is invertible, the reverse bracket of $[xy]_\psi $ is $\psi([xy]_{\psi^{-1}}).$
\end{theorem}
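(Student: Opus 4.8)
The strategy is to verify the six Lie bracket axioms of \Dref{LieD}(i) directly for the bracket $[xy]_\psi = xy + \psi(yx)$, using the pre-Lie $\psi$-pair hypothesis in the one place it is needed, namely the Jacobi $\mcL_0$-identity. The first step is to dispose of the easy axioms. Axiom (a): $[xx]_\psi = x^2 + \psi(x^2) \in \mcA_0$ by \Dref{pnm}(ii). Axiom (b): $[xy]_\psi + [yx]_\psi = xy + \psi(yx) + yx + \psi(xy) = (xy + yx) + \psi(xy + yx) \in \mcA_0$, again by \Dref{pnm}(ii), using that $\psi$ is an additive endomorphism. Axioms (d) and (e) are immediate from $\mcA_0$-additivity and the bimagma distributivity, exactly as in the proof of Theorem~\ref{pLie}.

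The heart of the argument is axiom (c), the Jacobi $\mcL_0$-identity, in the form \eqref{spel}: $[[xy]_\psi z]_\psi + [x[yz]_\psi]_\psi + [y[zx]_\psi]_\psi \in \mcA_0$. Here I would expand each triple bracket using the defining formula and the identity $\psi(abc) = a\psi(b)c = \cdots$ (the Remark just preceding Lemma~\ref{Leib1}, which lets $\psi$ slide through products), collecting the $12$ resulting monomials. The key observation is that grouping terms according to their pre-Lie associator structure, the cyclic sum of triple brackets equals a cyclic sum of $\psi$-associators $(x,y,z)_\psi + (y,z,x)_\psi + (z,x,y)_\psi$ together with a term already in $\mcA_0$; then the pre-Lie axiom $(x,y,z)_\psi + \psi((x,z,y)_\psi) \in \mcA_0$ (and its cyclic rotations, plus $\mcA_0$-additivity) force the whole expression into $\mcA_0$. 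Axiom (c$'$), the reflected Jacobi identity, follows by the symmetric computation. The element $\psi$ being a pre-negation map (rather than a genuine negation) is harmless throughout, since we never need $\psi^2 = \one$ for the Lie pair axioms; we only need $b + \psi(b) \in \mcA_0$, $\psi(\mcA_0) \subseteq \mcA_0$, and the sliding property $\psi(bb') = b\psi(b') = \psi(b)b'$.

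For the last sentence, that the reverse bracket of $[xy]_\psi$ is $\psi([xy]_{\psi^{-1}})$ when $\psi$ is invertible: the reverse bracket is by definition $[yx]_\psi = yx + \psi(xy)$. On the other hand $[xy]_{\psi^{-1}} = xy + \psi^{-1}(yx)$, so $\psi([xy]_{\psi^{-1}}) = \psi(xy) + \psi(\psi^{-1}(yx)) = \psi(xy) + yx$, which is exactly $[yx]_\psi$. This is a one-line verification once invertibility of $\psi$ is assumed.

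The main obstacle is the bookkeeping in axiom (c): one must expand six nested brackets, track the placement of $\psi$ through each monomial, and recognize the cyclic-associator pattern so that the pre-Lie hypothesis can be applied cleanly rather than term-by-term. I expect the computation to parallel the one in Theorem~\ref{pLie} closely enough that it can be presented compactly, perhaps with the remark that the verification is "as in the proof of Theorem~\ref{pLie}" after isolating the associator terms.
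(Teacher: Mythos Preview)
Your proposal is correct and follows essentially the same route as the paper: verify (a) and (b) by the pre-negation property $b+\psi(b)\in\mcA_0$, dispose of (d), (e) easily, and for (c) expand the twelve monomials, regroup them as a cyclic sum of $\psi$-associators paired with their $\psi$-images (the paper writes this as $\bigl((x,y,z)_\psi+\psi((x,z,y)_\psi)\bigr)+\bigl((y,z,x)_\psi+\psi((y,x,z)_\psi)\bigr)+\bigl((z,x,y)_\psi+\psi((z,y,x)_\psi)\bigr)$), and apply the pre-Lie hypothesis three times. The only cosmetic difference is that the paper computes the all-left-nested cyclic form $[[xy]z]+[[yz]x]+[[zx]y]$ (as in its proof of Theorem~\ref{pLie}) rather than the mixed form from \eqref{spel}; since you will already have established (b), the two differ by elements of $\mcA_0$, so either computation suffices. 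Your reverse-bracket verification is identical to the paper's.
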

  \begin{proof}
  We modify the proof of Theorem~\ref{pLie}.
  (a) and (b) are the same, and to get (c) we have

   \begin{equation}
  \begin{aligned}
 [[xy]z]+[[yz]x]+[[zx]y] &= (xy)z+\psi( yx)z+\psi( z)(xy)+\psi( z)\psi(yx)+(yz)x\\
 &+ \psi( (zy))x + \psi( x)(yz) +\psi( x)\psi(zy) + (zx)y\\
 &+\psi(xz)y+\psi( y)(zx)+\psi( y)\psi(xz)\\
 &=(x,y , z)_ \psi +  ( y,z,x)_\psi  +(z,x,y)_\psi  +\psi((z,y,x)_\psi\\
 &+(x,z,y)_\psi  + (y,x,z)_\psi  )\\
 &=  ((x,y , z)_ \psi +  \psi( (x,z , y)_ \psi) + ( ( y,z,x)+  ( y,x,z) _ \psi )\\
 &+((z,x,y)_\psi  + \psi( (z,y,x)_\psi)  )
    \end{aligned}
\end{equation}
which is in $ \Rcal_0.$

$(c')$ is analogous, and (d),(e) are easy.

Finally, $[xy]_\psi ^\dag = [yx]_\psi  = yx +\psi( xy) = \psi( (xy  +\psi^{-1}( y)x).$
  \end{proof}

  \subsection{Lie pairs from semiring pairs with involution}$ $\label{sec:invo}

We can also get examples from involutions.
Let $\Rcal$ be an associative $\Ccal$-semialgebra equipped with an involution $*:\Rcal\to \Rcal$, cf.~\Dref{basic2}.
Define $\Lcal$ to be  $\Rcal$, endowed with the bracket:
\begin{equation}
\label{br}
[xy]=xy+y^*x
\end{equation}
and $\Lcal_0$ be the   $\Ccal$-module
$$
\sum _{x\in \Rcal}   \Rcal(x+x^*) + \sum _{x\in \Rcal}   (x+x^*)\Rcal + \sum _{x\in \Rcal}\Rcal   (x+x^*) \Rcal .
$$
In particular $x(y+y^*)$,  $(x+x^*)y$, and $x(y+y^*)z$ belong to $\Lcal_0$ for any arbitrary choice of $x,y,z \in\Rcal$.
\begin{theorem}\label{Linv}
  $(\Lcal,\Lcal_0)$ as defined above is a   Lie  pair.
\end{theorem}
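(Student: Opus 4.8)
The plan is to verify directly, for $(\Lcal,\Lcal_0)$ with $\Lcal=\Rcal$ and $[xy]=xy+y^*x$, the full list of conditions defining a Lie pair, namely the axioms of \Dref{LieD}(i) together with $\Ccal$-bilinearity and the $\ad$-sum conditions. I would begin with one structural observation that disposes of almost everything: since $\Rcal$ is unital and $(x+x^*)^*=x+x^*$, the subset $\Lcal_0$ is precisely the two-sided ideal of the semiring $\Rcal$ generated by all symmetrizations $\{w+w^*:w\in\Rcal\}$, and it satisfies $\Lcal_0^*=\Lcal_0$; in particular $\Lcal_0$ is a $\Ccal$-submodule (so $(\Lcal,\Lcal_0)$ is a pair), and $r\Lcal_0\subseteq\Lcal_0$ and $\Lcal_0 r\subseteq\Lcal_0$ for every $r\in\Rcal$. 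Granting this, $\Ccal$-bilinearity of $[\ ]$ is clear from additivity of $*$ and the $\Ccal$-bimagma axioms; $[xx]=x^2+x^*x=(x+x^*)x\in\Lcal_0$; $[xy]+[yx]=(x+x^*)y+(y+y^*)x\in\Lcal_0$; $\ad_{cx}=c\,\ad_x$; and, since $\ad_y$, $\ad_y^{\dagger}$ and the maps $x\mapsto\ad_x$, $x\mapsto\ad^{\dagger}_x$ are additive while $\Lcal_0$ is a two-sided ideal, every hypothesis $\sum_i x_i\in\Lcal_0$ yields $\sum_i\ad_y(x_i)=y(\sum_i x_i)+(\sum_i x_i)^*y\in\Lcal_0$ together with the analogous statements for $\ad^{\dagger}_y$, $\sum_i\ad_{x_i}$ and $\sum_i\ad^{\dagger}_{x_i}$.

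What remains — and where the real work lies — is the Jacobi $\Lcal_0$-identity \eqref{spel} (axiom (c)) and its reflection (c$'$). The guiding picture is that $\Lcal_0$ is engineered so that, modulo $\Lcal_0$, the anti-automorphism $*$ behaves like a negation (because $w+w^*\in\Lcal_0$), whence $[xy]=xy+y^*x$ plays the role of the commutator $xy-yx$ and Jacobi ought to follow from the classical associative identity; as there is no genuine negation, I would make this precise by an explicit expansion. Concretely: expand $[[xy]z]+[x[yz]]+[y[zx]]$ using $[ab]=ab+b^*a$ and the anti-automorphism rule $(b_1b_2)^*=b_2^*b_1^*$, collect the resulting $\Ccal$-combination of degree-$3$ monomials in $x,y,z$ and their $*$-images, and then exhibit this combination as a $\Ccal$-combination of blocks of the shape $r(w+w^*)s$, each of which lies in $\Lcal_0$ by the very definition of $\Lcal_0$. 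Because $\Lcal_0$ is generated by all symmetrizations and is an ideal on both sides, it is exactly wide enough to collect every stray monomial produced by the expansion. The reflected identity (c$'$) is then obtained from the same calculation applied to the reflected bracket, exactly as ``(c$'$) is analogous'' in the proof of Theorem~\ref{pLie}.

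The one genuine obstacle I anticipate is the bookkeeping inside that Jacobi step: one must track carefully how the stars get permuted each time the anti-automorphism is applied, and then hit on the correct partition of the (roughly a dozen) monomials into blocks $r(w+w^*)s$. In Theorem~\ref{pLie} a pre-negation map obeys $(bb')^*=b(b')^*=\psi(b)b'$, so the triple bracket telescopes cleanly into a single term $\psi(\,\cdot+\psi(\cdot)\,)$; here the anti-automorphism reverses the order inside every starred factor, so that collapse does not occur and the regrouping has to be arranged by hand. Everything outside this computation — the description of $\Lcal_0$ as a $*$-invariant two-sided ideal and the consequent verification of all the remaining axioms — is routine.
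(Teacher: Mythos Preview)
Your proposal is correct and follows essentially the same route as the paper: verify (a) and (b) by the one-line factorizations $(x+x^*)x$ and $(x+x^*)y+(y+y^*)x$, and then handle the Jacobi axiom by expanding the triple bracket into twelve degree-$3$ monomials and regrouping them into six blocks of the shape $r(w+w^*)s$, each manifestly in $\Lcal_0$. The paper carries out exactly this regrouping (using the cyclic form $[[xy]z]+[[yz]x]+[[zx]y]$), and dispatches axiom~(d) with the same ``$\Lcal_0$ is an ideal'' remark you make at the outset; your write-up is in fact more complete than the paper's in that you also explicitly record bilinearity, axiom~(e), and the $\ad$-sum conditions.
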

\begin{proof}
Let us check the axioms \ref{LieD}(i).

(a) $[xx]=x^2+x^*x=(x+x^*)x\in \Lcal_0.$

(b)  $[xy]+[yx] = xy+y^*x+yx+x^*y=(x+x^*)y+(y+y^*)x \in \Lcal_0.
$

(c) The Jacobi $\mcL_0$- identity holds, since
\begin{equation}\label{eq:jcbst}
\begin{aligned}
\relax [[xy]z] +[[yz]x]+[[zx]y]&=(xy+ y^*x)z+z^*(xy+y^*x)+(yz+z^*y)x+x^*(yz+z^*y)\\
&+(zx+x^*z)y+y^*zx+x^*z)\\
&=\underbrace{(x+x^*)}yz+\underbrace{(y^*+y)}zx+\underbrace{(z^*+z)}xy\\
&+y^*\underbrace{(x+x^*)}z+z^*\underbrace{(y+y^*)}x+x^*\underbrace{(z+z^*)}y.
\end{aligned}
\end{equation}
Since each bracketed term belongs to $\Lcal_0$, the left side of \eqref{eq:jcbst}  belongs to $\Lcal_0$.

(d) Follows from the fact that $\Lcal_0$ is an ideal.
\end{proof}

\begin{importantnote}
Theorem~\ref{Linv} is the $\mcL_0$-version of skew symmetric elements
(defined by $x+x^* =\zero)$, since here we have stipulated $x+x^* \in \mcL_0.$
\end{importantnote}

\subsubsection{$\vep$-Skew symmetric pairs\footnote{We could do this for a general pre-negation map $\vep$ if we stipulate that $\psi$ preserves the involution, i.e., $\psi(x^*) = \psi(x)^*$.}}

More generally, let us now fix $\vep\in C$ and define the bracket on $\Rcal$ by
\begin{equation}
\label{br1}
[xy]=xy+\vep y^*x.
\end{equation}
Take $\mcL = R$ and stipulate that $\Lcal_0$ contains $\{x+\vep x^* : x\in \Rcal\}$.
\begin{theorem}\label{Linv1}
$(\Lcal,\Lcal_0)$ is a  Lie pair under the bracket of \eqref{br1}.
\end{theorem}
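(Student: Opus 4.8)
The plan is to mimic the proof of Theorem~\ref{Linv} almost verbatim, replacing the bare involution $x \mapsto x^*$ by its $\vep$-twisted version $x \mapsto \vep x^*$ wherever it occurs, and to check the Lie bracket axioms of \Dref{LieD}(i) directly on the bracket $[xy] = xy + \vep y^* x$. First I would record the elementary fact that $\Lcal_0$, being stipulated to contain $\{x + \vep x^* : x \in \Rcal\}$ and to be $\Rcal_0$-additive (so an ideal of $\Rcal$), absorbs left and right multiplication; in particular $x(y+\vep y^*)$, $(x+\vep x^*)y$, and $x(y+\vep y^*)z$ all lie in $\Lcal_0$. That observation is what drives every verification.

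The axiom checks then go as follows. For (a), $[xx] = x^2 + \vep x^* x = (x + \vep x^*)x \in \Lcal_0$. For (b), $[xy] + [yx] = xy + \vep y^* x + yx + \vep x^* y = (x + \vep x^*)y + (y + \vep y^*)x \in \Lcal_0$, here using that $\vep$ is central in $C$ so that $\vep y^* x + yx = (y + \vep y^*)x$ — wait, one must be slightly careful: $\vep y^*x + yx = (y+\vep y^*)x$ only uses that $C$ acts centrally, which holds since $\Rcal$ is a $\Ccal$-semialgebra. For the Jacobi $\mcL_0$-identity (c), I would expand $[[xy]z] + [[yz]x] + [[zx]y]$ exactly as in \eqref{eq:jcbst}, and regroup the twelve resulting monomials into six blocks, each of which has a factor of the form $(u + \vep u^*)$ sandwiched appropriately, hence lies in $\Lcal_0$; the reflected Jacobi identity (c$'$) is symmetric. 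Axioms (d) (the $C$-linearity $\ad_{cx} = c\,\ad_x$) and (e) (null sums pushed through $\ad_y$, $\ad_y^\dag$) follow because the bracket is $C$-bilinear and because $\Lcal_0$ is an ideal, so that $\sum_i x_i \in \Lcal_0$ forces $\sum_i (x_i y + \vep y^* x_i) \in \Lcal_0$ and similarly on the other side.

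The one genuinely delicate point — the main obstacle — is the regrouping in the Jacobi computation. In Theorem~\ref{Linv} the grouping worked because $(x^* y + x y)$ could be read as $(x+x^*)y$; with the $\vep$ twist each cross term acquires a single factor of $\vep$, and when two twisted terms meet, as in $\vep z^* \cdot \vep y^* x = \vep^2 z^* y^* x$, one gets $\vep^2$ rather than $\vep$. So I would need to track the $\vep$-powers carefully and check that the terms still assemble into expressions of the form $\Rcal(u + \vep u^*)$, $(u+\vep u^*)\Rcal$, or $\Rcal(u+\vep u^*)\Rcal$ after pulling out the appropriate power of the central scalar $\vep$; concretely, each of the six blocks should come out as $\vep^j\big(\Rcal(u + \vep u^*)\Rcal\big)$ for the relevant $j \in \{0,1,2\}$ and the relevant $u \in \{x,y,z\}$, which lands in $\Lcal_0$ since $\Lcal_0$ is a $\Ccal$-module closed under those multiplications. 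Once that bookkeeping is done the rest is routine, and I would remark (as the paper does in the footnote) that the hypothesis needed is exactly that $\vep$ is central, which is automatic here since $\vep \in C$.
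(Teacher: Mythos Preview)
Your proposal is correct and follows essentially the same route as the paper's own proof: check axioms (a), (b), (c) by direct expansion and regrouping into factors of the form $(u+\vep u^*)$, with (d) and (e) immediate from bilinearity and the ideal property of $\Lcal_0$. In fact you supply more detail than the paper, which for the Jacobi identity simply says ``we use again expression \eqref{eq:jcbst} with $\vep$ inserted in the appropriate places''; your bookkeeping of the $\vep^2$ terms and the six-block regrouping $(x+\vep x^*)yz$, $(y+\vep y^*)zx$, $(z+\vep z^*)xy$, $\vep y^*(x+\vep x^*)z$, $\vep z^*(y+\vep y^*)x$, $\vep x^*(z+\vep z^*)y$ is exactly what that phrase is hiding.
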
 \begin{proof}
Again let us check  the axioms of Definition \ref{LieD}(i).
First of all
\[
[xx]=x^2+\vep x^*x=x(x+\vep x^*) \in \Lcal_0.
\]
This proves that axiom \ref{LieD}(i)(a) holds.

To check (b),
$[xy]+[yx]=xy+\vep y^*x+yx+\vep x^*y=(x+\vep x^*)y+(y+\vep y^*)x$ which belongs to $\Lcal_0$ because $(x+\vep x^*)$ and $(y+\vep y^*)$ do.

For (c), the Jacobi $\mcL_0$-identity, we use again expression \eqref{eq:jcbst} with $\vep$ inserted in the appropriate places.

(d) and (e) are obvious.
\end{proof}

\subsection{The ``classical'' Lie pairs}$ $

We can now describe the paired version of the classical Lie algebras $A_n, B_n, C_n, D_n$.
We need the {\it trace}  $\tr{(A)} := \sum a_{ii}$ of a matrix $A = (a_{ij}).$

\begin{lemma}
 $\tr(AB) = \tr(BA)$ for matrices $A,B$ over $\Ccal$.
\end{lemma}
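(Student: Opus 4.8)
The statement $\tr(AB) = \tr(BA)$ is the classical "trace is cyclic" identity, and the only subtlety here is that $\Ccal$ is merely a commutative semiring, so we cannot subtract — but we never need to, since the identity is a genuine equality of sums of products, not a cancellation statement. The plan is to expand both sides by the definition of matrix multiplication and the definition of trace, and observe that the resulting double sums coincide after relabelling indices, using only commutativity of multiplication in $\Ccal$ and associativity/commutativity of the (finite) sums.

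\textbf{Key steps.} First I would write $A = (a_{ij})$, $B = (b_{ij})$ with indices ranging over $\{1,\dots,n\}$. By definition of the product, the $(i,i)$ entry of $AB$ is $\sum_{j} a_{ij} b_{ji}$, so
\[
\tr(AB) = \sum_i \sum_j a_{ij} b_{ji}.
\]
Symmetrically, the $(j,j)$ entry of $BA$ is $\sum_i b_{ji} a_{ij}$, whence
\[
\tr(BA) = \sum_j \sum_i b_{ji} a_{ij}.
\]
Now apply commutativity of multiplication in $\Ccal$ to rewrite $b_{ji}a_{ij} = a_{ij}b_{ji}$ in each term, and then use commutativity and associativity of the finite sum to interchange the order of summation. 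Both expressions become $\sum_{i,j} a_{ij} b_{ji}$, so they are equal. This completes the argument.

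\textbf{Main obstacle.} There is essentially no obstacle: the only point worth flagging is that one must not invoke any cancellation or additive-inverse property of $\Ccal$, since $\Ccal$ is a semiring — but the proof as sketched uses only the (multiplicative) commutativity of $\Ccal$ and the fact that finite sums in a commutative monoid can be reindexed freely, both of which hold by hypothesis. Implicitly we also use that $A$ and $B$ are finite (say $n \times n$) matrices so that all sums involved are finite; I would assume this is the standing convention, as the trace $\sum a_{ii}$ is only defined in that case.
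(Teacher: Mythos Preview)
Your proof is correct and follows exactly the same approach as the paper's: expand both traces as double sums, apply commutativity of multiplication in $\Ccal$, and reindex. The paper's argument is just the one-line version $\tr(AB)= \sum a_{ij}b_{ji} = \sum b_{ji}a_{ij} = \tr(BA)$, with the same reliance solely on commutativity and no use of cancellation.
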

\begin{proof}
$\tr(AB)= \sum a_{ij}b_{ji} = \sum b_{ji}a_{ij} = \tr(BA), $
since $\Ccal$ is commutative.
\end{proof}

\begin{theorem}\label{clas1} Fix $\vep \in \Ccal$ such that $\vep+1 \in \Ccal_0.$ (See footnote 2.)

\begin{enumerate}\eroman
    \item The paired version of the classical Lie algebra $A_n$ is given by the \textbf{special linear pair}   ${\rm sl}_n :=(\Lcal,\Lcal_0),$
    where $\Lcal = \{ x\in M_n(\Ccal) : \tr(x)\in \Ccal_0\},$ and $\Lcal_0$ is obtained as in Theorem~\ref{preLir}.
    \item The  $\vep$-paired version of the classical Lie algebra $B_n$ is given by $\mathfrak{so}_{2n+1}^{(\vep)} :=(\Lcal,\Lcal_0),$
    where  $\Lcal =  x\in  M_{2n+1}(\Ccal)  : x +\vep x^T \in M_{2n+1}(\Ccal_0)\},$ and $\Lcal_0$ is obtained as in Theorem~\ref{Linv}.
    \item The $\vep$-paired version of the classical Lie algebra $C_n$ is given by  $\mathfrak{sp}_{2n}^{(\vep)} :=(\Lcal,\Lcal_0),$
    with  $\Lcal =  \{ x\in M_{2n}(\Ccal) : Jx +\vep x^TJ \in M_{2n+1}(\Ccal_0)\},$ where $J$ is the matrix $   \bpmtx
     0 & 1 \\ \vep & 0\epmtx,$ and $\Lcal_0$ is obtained as in Theorem~\ref{Linv}.
    \item The  $\vep$-paired version of the classical Lie algebra $D_n$ is given by $\mathfrak{so}_{2n}^{(\vep)} :=(\Lcal,\Lcal_0),$
    where  $\Lcal =  \{ x\in M_{2n}(\Ccal) : Jx +\vep x^TJ \in M_{2n}(\Ccal_0)\},$ and $\Lcal_0$ is obtained as in Theorem~\ref{Linv}.
\end{enumerate}
\end{theorem}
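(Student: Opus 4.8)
The plan is to treat Theorem~\ref{clas1} as four essentially routine verifications, each of which reduces to an already-established construction theorem together with the observation that the relevant matrix set is closed under the bracket. First I would handle part (i): the set $\Lcal = \{x \in M_n(\Ccal) : \tr(x) \in \Ccal_0\}$ is a $\Ccal$-submodule of $M_n(\Ccal)$ because $\tr$ is $\Ccal$-linear and $\Ccal_0$ is a $\Ccal$-subset; one must then check it is closed under the Lie commutator bracket $[xy] = xy + \vep yx$ coming from Theorem~\ref{pLie}/\ref{preLir} applied to the semiring pair $(M_n(\Ccal), M_n(\Ccal)_0)$ with $\psi(x) = \vep x$. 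This closure is exactly the statement $\tr(xy + \vep yx) = \tr(xy) + \vep\tr(yx) = (1+\vep)\tr(xy) \in \Ccal_0$, using the trace lemma just proved and $1+\vep \in \Ccal_0$. Then $(\Lcal, \Lcal_0)$ is a sub-pair of the ambient $\psi$-Lie pair $(M_n(\Ccal), M_n(\Ccal)_\psi)$, hence a Lie pair by the sub-pair lemma in \S\ref{sec311}.

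For parts (ii), (iii), (iv) the pattern is the same, but now the ambient construction is Theorem~\ref{Linv} (or \ref{Linv1}) applied to $(M_N(\Ccal), *)$ with $N = 2n+1$ or $2n$ and the involution $x \mapsto x^T$ in case (ii), or $x \mapsto J^{-1}x^T J$ (suitably interpreted, since $\Ccal$ need not have inverses — here one works directly with the bracket $[xy] = xy + \vep\,(J x J^{-1})^* y$-type formula, or better, with the twisted involution $x^* = J x^{T} J$ up to the scalar normalization encoded in $J = \bpmtx 0 & 1 \\ \vep & 0 \epmtx$) in cases (iii) and (iv). The set $\Lcal$ in each case is $\{x : x + \vep x^* \in M_N(\Ccal_0)\}$, which is a $\Ccal$-submodule because $x \mapsto x + \vep x^*$ is $\Ccal$-linear. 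The key closure computation is that if $x + \vep x^*$ and $y + \vep y^*$ lie in $M_N(\Ccal_0)$, then $[xy] + \vep[xy]^* = [xy] + \vep[yx] = (x + \vep x^*)y + (y + \vep y^*)x \in M_N(\Ccal_0)$ — this is precisely axiom (b) of \Dref{LieD}(i) restricted to $\Lcal$, and it shows $[xy] \in \Lcal$. Once closure under the bracket (and under $\Ccal$-scaling and addition) is verified, each $(\Lcal, \Lcal_0)$ is a sub-pair of the Lie pair furnished by Theorem~\ref{Linv}/\ref{Linv1}, hence a Lie pair by the sub-pair lemma; the Lie bracket axioms (a), (c), (c$'$), (d), (e) are inherited a fortiori since they already hold in the ambient pair.

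The main obstacle I anticipate is bookkeeping around the symplectic and orthogonal cases, where the matrix $J = \bpmtx 0 & 1 \\ \vep & 0 \epmtx$ (read blockwise, with the $1$ and $\vep$ denoting $n \times n$ identity-type blocks) plays the role of the Gram matrix, and one must confirm two things: first, that the twisted map $x \mapsto J^{-1} x^T J$ (or the inverse-free surrogate built into the defining condition $Jx + \vep x^T J \in M_N(\Ccal_0)$) is genuinely an involution of the semialgebra $M_N(\Ccal)$ in the sense of \Dref{basicdefs}, which requires $J^2$ to be a scalar so that applying the map twice returns the identity up to a unit — here $J^2 = \vep \cdot \mathrm{Id}$, so one needs $\vep$ to be suitably invertible or needs to absorb it, and this is the one place where the footnote-2 caveat about $\psi$ commuting with $*$ is doing real work; second, that the condition $Jx + \vep x^T J \in M_N(\Ccal_0)$ transforms correctly under the bracket, i.e., that $J[xy] + \vep [xy]^T J \in M_N(\Ccal_0)$ follows from the analogous statements for $x$ and $y$ — this is again just axiom (b) after left/right multiplication by $J$ and a short manipulation using $J x^T J \cdot$-associativity. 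I would organize the write-up so that the $A_n$ case is done in full and the remaining three are handled by explicitly pointing to Theorem~\ref{Linv}, stating the closure identity $[xy] + \vep[xy]^* = (x+\vep x^*)y + (y + \vep y^*)x$, and remarking that everything else is inherited; I would flag the $J^2 = \vep\,\mathrm{Id}$ normalization as the only subtle point.
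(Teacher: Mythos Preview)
Your plan matches the paper's proof exactly: part (i) via Theorem~\ref{preLir} together with the trace identity $\tr(xy+\vep yx) = (1+\vep)\tr(xy) \in \Ccal_0$, and parts (ii)--(iv) via Theorem~\ref{Linv} with the involution being the transpose (for $B_n,D_n$) or $x \mapsto J^{-1}x^T J$ (for $C_n$). The paper handles the invertibility of $\vep$ exactly as you anticipated, by ``formally adjoining $\vep^{-1}$ if necessary.''

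There is one concrete slip in your closure computation for (ii)--(iv). You write $[xy] + \vep[xy]^* = [xy] + \vep[yx]$, but $[xy]^* \ne [yx]$ for the bracket $[xy] = xy + y^*x$ of Theorem~\ref{Linv}: one has $[xy]^* = y^*x^* + x^*y$ whereas $[yx] = yx + x^*y$. Consequently your displayed identity $(x+\vep x^*)y + (y+\vep y^*)x$ is not what $[xy]+\vep[xy]^*$ equals. The correct computation is the direct one:
\[
[xy] + \vep[xy]^* \;=\; xy + y^*x + \vep\,(y^*x^* + x^*y) \;=\; (x + \vep x^*)\,y \;+\; y^*\,(x + \vep x^*),
\]
which lies in $M_N(\Ccal_0)$ because $x + \vep x^* \in M_N(\Ccal_0)$ and $M_N(\Ccal_0)$ is a two-sided ideal. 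With this correction in place, your argument (closure under the bracket, then inheriting all the Lie-pair axioms as a sub-pair of the ambient Lie pair furnished by Theorem~\ref{Linv}) goes through and is precisely what the paper's terse ``By Theorem~\ref{Linv}'' is pointing at.
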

\begin{proof}
(i) By Theorem~\ref{preLir}, noting that ${\rm sl}_n^{(\vep)}$ is closed under $[\phantom{w}]_\psi $ since $\tr{AB+\vep BA} \in \Ccal_0.$

(ii), (iv) By Theorem~\ref{Linv}.

(iii) Also by Theorem~\ref{Linv}, using the involution $x\mapsto J^{-1}x^T J$ (formally adjoining $\vep^{-1}$ if necessary).
\end{proof}

The exceptional Lie pairs could also be defined, but this is effected most easily via the Jordan version.

\subsection{Non-classical examples}\label{subsec:ME}

Other Lie pairs  cannot be obtained by means of Lie commutators.

\begin{example} The $\Ccal_0$-skew $3\times 3$ matrices   deserve  further analysis. In $\Lcal:=M_3(\Ccal)$ we  consider matrices of type $J_0,J_1,J_2$, which  depend on two $\Ccal_0$-constrained parameters of $\Ccal$, namely
$$
J_0:=\left\{\left.J_0(a,a'):=\bpmtx 0&a&0\cr a'&0&0\cr 0&0&0\epmtx\,\right|\, a+a'\in\Ccal_0\right\}\in \Ccal^{3\times 3},
$$
\medskip
$$
J_1:=\left\{\left.J_1(b,b')=\bpmtx 0&0&b\cr 0&0&0\cr b'&0&0\epmtx\,\right|\, b+b'\in\Ccal_0\right\}\in \Ccal^{3\times 3},
$$
\medskip
$$
J_2:=\left\{\left.J_2(c,c')=\bpmtx 0&0&0\cr 0&0&c\cr 0&c'&0\epmtx\,\right|\, c+c'\in\Ccal_0\right\}\in \Ccal^{3\times 3}.
$$

Clearly each element of $\Lcal$ can be written (not uniquely) as a  $\Ccal$-linear combination of a matrix of type $J_0$, one of type $J_1$, and one of type $J_2$.
We claim that
\begin{enumerate}
\item[(a)] $[J_iJ_i]\in \Lcal_0$;
\item[(b)] $[J_i,J_j]\subseteq J_{(i+j)\mod 3}$.
\end{enumerate}
We know already that $(\Lcal,\Lcal_0)$ is a Lie pair but nevertheless it is instructive
 to perform explicit computations to obtain  Property (b).
\begin{eqnarray*}
[J_0(a,a'),J_1(b,b')]&=&J_0(a,a')J_1(b,b')+J_1(b,b')^TJ_0(a,a')\cr\cr
&=&\bpmtx0&a&0\cr a'&0&0\cr0&0&0\epmtx\bpmtx0&0&b\cr 0&0&0\cr b'&0&0\epmtx+\bpmtx0&0&b'\cr 0&0&0\cr b&0&0\epmtx\bpmtx0&a&0\cr a'&0&0\cr0&0&0\epmtx\cr\cr\cr
&=&\bpmtx0&0&0\cr 0&0&ab\cr0&ab'&0\epmtx=aJ_2(b,b')\in J_2.
\end{eqnarray*}

Clearly the same argument holds for the other possible choices of indices, and b) is proven.

\medskip
\noindent
\end{example}
To go further in producing examples, we start with the most fundamental ones.
Given a pair $(\mcL, \mcL_0) $ we will write its product as the bracket $[xy]$,
with the hope of showing that it is a Lie bracket. Write
$\mcL':= [\mcL\mcL]$.
\begin{definition}
     A   pair  $(\mcL, \mcL_0) $ is $\mcL_0$-\textbf{Lie abelian} if $\mcL'\subseteq \mcL_0.$ More generally, $(\mcL, \mcL_0) $  is $\mcL_0$-\textbf{Lie nilpotent} of \textbf{index} $2$ if $[\mcL \mcL']\subseteq \mcL_0$.
\end{definition}

 \begin{lemma}\label{nil1}
 ${}$
\begin{enumerate}\eroman
    \item  All  $\mcL_0$-Lie nilpotent pairs of {index} $2$ satisfy the Jacobi $\mcL_0$- identity.
    \item If  $(\mcL, \mcL_0) $ is $\mcL_0$-{Lie abelian}, then  the Jacobi $\preceq$-identity holds.
\end{enumerate}
 \end{lemma}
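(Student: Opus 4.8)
The plan is to verify the two parts of Lemma~\ref{nil1} directly from the definitions, using the translation \eqref{spel} of the Jacobi $\mcL_0$-identity and the form \eqref{J1} of the Jacobi $\preceq$-identity (via Remark~\ref{spel3}).

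For part (i), suppose $(\mcL,\mcL_0)$ is $\mcL_0$-Lie nilpotent of index $2$, so $[\mcL\mcL']\subseteq\mcL_0$ where $\mcL'=[\mcL\mcL]$. I want to show
\[
[[xy]z]+[x[yz]]+[y[zx]]\in\mcL_0
\]
for all $x,y,z\in\mcL$. The point is that each of the three summands already lies in $\mcL_0$: $[[xy]z]$ is obtained by bracketing $[xy]\in\mcL'$ against $z$, hence lies in $[\mcL'\mcL]$; now $[\mcL'\mcL]\subseteq\mcL_0$ follows from $[\mcL\mcL']\subseteq\mcL_0$ together with the $\mcL_0$-skew-symmetry axiom (b) and the fact (axiom (f)) that bracketing a null sum against an element stays null. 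Similarly $[x[yz]]$ and $[y[zx]]$ each bracket an element against a member of $\mcL'$, so they lie in $[\mcL\mcL']\subseteq\mcL_0$ outright. A sum of three elements of the submodule $\mcL_0$ is in $\mcL_0$, which gives the claim. The reflected Jacobi $\mcL_0$-identity is handled identically.

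For part (ii), suppose $(\mcL,\mcL_0)$ is $\mcL_0$-Lie abelian, i.e.\ $\mcL'=[\mcL\mcL]\subseteq\mcL_0$. I want the Jacobi $\preceq$-identity, which by Remark~\ref{spel3} reads $[[xy]z]\preceq[x[yz]]+[y[zx]]$ (assuming we are in the reversible setting; otherwise I argue at the level of $\ad$'s, showing $\ad_{[xy]}\preceq\ad_x\ad_y+\ad_y\ad^\dagger_x$). Here $[xy]\in\mcL_0$, so $\ad_{[xy]}\in\End(\mcL,\mcL_0)_0$ by the Lie-pair condition that null sums of brackets give null adjoints; thus $\ad_{[xy]}(z)=[[xy]z]\in\mcL_0$ for all $z$, i.e.\ $\zero\preceq_0[[xy]z]$ is false in general---rather, the correct reading is $[[xy]z]\in\mcL_0$, hence $[[xy]z]\preceq_0 w$ for any $w$ with $w=[[xy]z]+z'$, $z'\in\mcL_0$; taking $w=[x[yz]]+[y[zx]]$ works provided $[x[yz]]+[y[zx]]-[[xy]z]$ makes sense, which it does not without negation. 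So instead I use the genuine definition of $\preceq_0$: I must exhibit $z_0\in\mcL_0$ with $[x[yz]]+[y[zx]] = [[xy]z]+z_0$. Since $[[xy]z]\in\mcL_0$ and $\mcL_0$ is a submodule closed under the relevant operations, and since $[x[yz]]+[y[zx]]$ need not itself be in $\mcL_0$, this requires the surpassing axiom (ii) of \Dref{ddi} in the form: any element $b$ satisfies $\zero\preceq_0 b$ iff $b\in\mcL_0$, and more generally $b_1\preceq_0 b_1+b_0$. So I argue: $[[xy]z]\in\mcL_0$ gives $\zero\preceq_0[[xy]z]$, and then adding $[x[yz]]+[y[zx]]$ to both sides via \Dref{ddi}(i) yields $[x[yz]]+[y[zx]]\preceq_0 [[xy]z]+[x[yz]]+[y[zx]]$; combined with $[[xy]z]\preceq_0[[xy]z]$ this is not quite \eqref{J1}, so I reverse: from $\zero\preceq_0[x[yz]]$ and $\zero\preceq_0[y[zx]]$ (both true since $[yz],[zx]\in\mcL'\subseteq\mcL_0$ force these brackets into $\mcL_0$) I get $[[xy]z]\preceq_0[[xy]z]+[x[yz]]+[y[zx]]$, and since $[[xy]z]\in\mcL_0$ I can also absorb it, concluding $[[xy]z]\preceq_0[x[yz]]+[y[zx]]$ after checking the preorder bookkeeping.

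The main obstacle I anticipate is exactly this last bit of $\preceq_0$-bookkeeping in part (ii): one has to be careful that ``$b\in\mcL_0$'' translates correctly into statements about $\preceq_0$, and that the transitivity of the preorder is applied in the right direction, since unlike the classical case one cannot simply move terms across an equality. The cleanest route is probably to phrase everything at the level of $\End(\mcL,\mcL_0)$ using the induced surpassing relation on endomorphisms (Remark after \Dref{ddi}): show $\ad_{[xy]}=\zero_\mcL$ modulo $\End(\mcL,\mcL_0)_0$, i.e.\ $\ad_{[xy]}\in\End(\mcL,\mcL_0)_0$, and then $\ad_{[xy]}\preceq f$ for \emph{any} $f$, in particular for $f=\ad_x\ad_y+\ad_y\ad^\dagger_x$, directly from \Dref{ddi}(ii) applied element-wise. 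That makes part (ii) almost immediate and sidesteps the term-juggling entirely; I would present it that way.
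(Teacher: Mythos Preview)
Your treatment of part~(i) is fine and matches the paper's one-line proof (``all the terms are in $\mcL_0$''). The extra justification you give for $[\mcL'\mcL]\subseteq\mcL_0$ is more than the paper provides, though note that your route via axiom~(b) plus axiom~(e) actually requires $\mcL_0$-elimination to conclude $[wz]\in\mcL_0$ from $[wz]+[zw]\in\mcL_0$ and $[zw]\in\mcL_0$; you should not rely on that.

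Part~(ii) has a genuine gap. After correctly identifying the difficulty, you settle on the claim that $\ad_{[xy]}\in\End(\mcL,\mcL_0)_0$ forces $\ad_{[xy]}\preceq f$ for \emph{every} $f$, ``directly from Definition~\ref{ddi}(ii)''. That is false. Axiom~(ii) of Definition~\ref{ddi} says only that $b_1\preceq b_1+b_0$ whenever $b_0\in\mcA_0$; it does \emph{not} say that a null element surpasses everything. Concretely, for $\preceq_0$ one has $a\preceq_0 b$ iff $b=a+c$ with $c\in\mcL_0$, and without negation there is no mechanism to manufacture such a $c$ from the bare knowledge that $a,b\in\mcL_0$. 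Your own earlier paragraph spotted exactly this obstruction and then, in the final paragraph, declared it overcome without justification.

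The paper takes a different tack: rather than trying to compare two unrelated null elements, it argues that in the $\mcL_0$-abelian case one has the \emph{equality} $[[xy]z]=[x[yz]]$ (via the chain $[[xy]z]=[[zy]x]=[x[yz]]$, which uses the reversibility/symmetry present in the ambient setting). Once that equality is in hand, the remaining summand $[y[zx]]$ lies in $\mcL_0$ since $[zx]\in\mcL_0$, and then Definition~\ref{ddi}(ii) gives
\[
[[xy]z]=[x[yz]]\ \preceq\ [x[yz]]+[y[zx]],
\]
which is exactly \eqref{J1}. The point you are missing is that one needs the left side to literally equal one summand on the right before the surpassing axiom can absorb the other null summand; ``everything is null'' is not enough.
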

 \begin{proof}
 (i) All the terms are in $\mcL_0$.

 (ii)    $[xy] , [yz]\in \mcL_0$ imply
  $[[xy]z] = [[zy]x]]= [x[yz]],$ and $[y[zx]]\in \mcL_0,$
  which in turn implies that $[[xy]z] \preceq [x[yz]+[y[zx]].$
 \end{proof}

 \subsubsection{ Low dimensional examples}

 If
 $\Lcal=\bigoplus_{i=1}^n\Ccal x_i$ we say  that $ \Lcal $ has dimension $n$.
    Many of the lowest dimensional examples lack negation maps. Blachar  \cite[\S 2.3]{Bl} provided the 3-dimensional examples of Lie pairs over a semifield $\Ccal $ having a negation map, so we shall only consider Lie pairs   over a  semifield pair $(\Ccal ,\Ccal _0)$.  \begin{example}\label{exab}
 The only 1-dimensional example is supplied by the trivial algebra $\mcL = \Ccal x,$ with $[xx]=0.$
 \end{example}
 \begin{example}\label{ex:ex55}
The   2-dimensional examples where
 the Lie pair is  $\mcL_0$-{Lie abelian}, so the Jacobi $\mcL_0$-identity holds. Let $\mcL = \Ccal x + \Ccal y.$
 \begin{enumerate}
  \item $\mcL_0 = \{\zero\};$ then we get the classical examples in \cite{Jac}.
     \item $\mcL_0 = \Ccal y$, where $[xx] = y$ and one of the following holds:
     \begin{itemize}
      \item   $ [xy] = [yx] = [yy] =y$.
   \item   $ [xy] = [yx] = y$ and $[yy] = 0.$
                \item $[xy] = [yx] = [yy] = 0.$
     \end{itemize}

             \item Now $\mcL_0 = \Ccal ( \mu x + \nu y)  $, with $\mu,\nu \ne 0.$ One example is $\mu+\nu =1,$ $[xx]=[yy]=0,$ and $[xy] =  [yx] =  \mu x + \nu y.$

             $[x[xy]] = \nu [xy]$ and $[y[xy]] = \mu[xy].$

             $[x[yx]] = \mu [xy]$ and $[y[yx]] =[[xy]x]= \nu[yx].$
              \end{enumerate}
              \end{example}

         A relevant $3$-dimensional example, whose aim is to recover the situation of the cross product,   will be studied separately in Section \ref{cp} below.

    \begin{example}
        Some $4$-dimensional examples. Let $\mcL = \Ccal x \oplus \Ccal y\oplus \Ccal z_1 \oplus \Ccal z_2,$ and  let $\mcL_0 =\Ccal z_1 \oplus \Ccal z_2$, where $ [xx] = [yy] = 0$,  $ [xy] = z_1,$ $ [yx] = z_2,$ and \begin{enumerate}
          \item $[xz_i]= [yz_i] = [z_iz_j] = 0$ for all $i,j$.
           \item (The  Heisenberg pair) $[xz_1]= [z_1y]= [z_2x]=[yz_2]= z_1$, $[z_iz_j] = 0$ and additionally $[xz_2]= [z_2y]= [z_1x]= [yz_1]= z_2.$
            \item $[xz_1]= [z_1y]= [z_2x]=[yz_2]= [z_1z_2]=z_1$, $[xz_2]= [z_2y]= [z_1x]= [yz_1]= [z_2z_1]=  z_2.$
      \end{enumerate}
      Example \ref{ex:ex55} (1) satisfies the Jacobi $\mcL_0$-identity, by computation. The other Lie pairs are $\mcL_0$-{Lie abelian}, so satisfy the Jacobi $\mcL_0$-identity.
 \end{example}

 \subsubsection{Filiform pairs}\label{fili}

Another large class of examples is provided by the {\em filiform   algebras} \cite{BaGoRem}, an important class of nilpotent Lie algebras    which
 has a {\em Verne basis} $\{x_1,\dots, x_n\}$ satisfying
\begin{enumerate}\eroman \item $[x_1x_i]=  x_{i+1},$ $1\le i \le n-1$,
        \item $[x_1, x_{n}] = 0,$
    \item $[x_i x_j] = \sum_{k\ge i+j} c_{i,j} x_k,$ $c_{i,j}\in C.$
\end{enumerate}

\begin{definition}
  A \textbf{filiform pair} is a Lie pair $(\Lcal,\Lcal_0)$ such that $\Lcal$ is a free $(C,C_0)$-module with basis
  $x_1,\dots,x_n$ satisfying the conditions
  \begin{enumerate}
    \item
  $[x_1x_i]=  x_{i+1},$ $1\le i \le n-1$,
        \item $[x_1, x_{n}] \in \Lcal_0,$
    \item $[x_i x_j] \in   \sum_{k\ge i+j} c_{i,j} x_k +\mcL_0 ,$ where $ c_{i,j} + c_{j,i} \in \mcL_0.$
\end{enumerate}
\end{definition}
(In particular $ [xx], [xy]+[yx] \in \mcL_0$, for all $x,y\in {\mcL}$.)

\begin{example} Let us see a few instances.
 \begin{enumerate}
\item The \textbf{standard $3$-dimensional filiform pair}   has generators $x_1,x_2,x_3$.    Choose  $\ell_{21}$ as $c_1x_1+c_2x_2+c_3x_3\in \Lcal$ arbitrarily ($c_i\in \Ccal $), and take $\Lcal_0=\Ccal (x_3+\ell_{21})$, together with  the relations
$$
[x_1x_2]=x_3,\qquad [x_2x_1]=\ell_{21},\qquad  [x_ix_j]=0 \quad \text{otherwise}.
$$
The Jacobi $\mcL_0$- identity  then  holds trivially,  checked on generators.
\item More generally choose $\ell_{13},\ell_{31},\ell_{23},\ell_{32},$ and define $\Lcal_0$ as being the $\Ccal $-submodule generated by $(\ell_{ij}+\ell_{ji}, \ell_{21}+x_3)$ ($i+j\geq 4$).
Then the commutators
$$
[x_1x_2]=x_3,\qquad [x_2x_1]=\ell_{21}, \quad[x_1x_3]=\ell_{13}, $$ $$  \quad [x_3x_1]=\ell_{31}, \quad [x_2x_3]=\ell_{23},\quad [x_3x_2]=\ell_{32},
$$
define a Lie pair $(\Lcal,\Lcal_0)$.
\end{enumerate}
 \end{example}

\subsubsection{The cross
 product Lie pair}\label{cp}

In any reasonable theory of Lie pairs one should be able to recover  the classical example of the Lie algebra $(\RR^3,\times)$, the cross product in the three-dimensional real vector space. We will do it via the procedure described in Section~\ref{sec311}.


\begin{example}[The cross product]\label{ex:lie220}
 Let
$$
\mcL=\Ccal b_0\oplus \Ccal b_1\oplus \Ccal b_2.
$$

Take arbitrarily two arbitrary  $3$-tuples  $
(c_0,c_1,c_2)$ and $(d_0,d_1,d_2)$ in $\Lcal^3,$ not necessarily $\Ccal$-linearly independent. 
We define a Lie bracket on $\Lcal$, depending on the choice of $(d_i)$ and $(c_i)$ (i.e. a $6$- parameter family) by encoding it into a
$\Lcal$-valued $3\times 3$ matrix $A:\Lcal\times\Lcal\lra \Lcal$ given by:
\be
A:=\begin{pmatrix}d_0&b_2&c_1\cr
c_2&d_1&b_0\cr
b_1&c_0&d_2
\end{pmatrix}\in \Lcal^{3\times 3}\cong \Lcal^*\otimes \Lcal^*\otimes\Lcal,\label{eq:matC}
\ee
stipulating that
$$
[b_i b_j]=A(i,j), \qquad  0\le i,j\le 2.
$$
We    obtain a Lie pair  generically, imitating the natural structure of the cross product. For this reason we define $$\Lcal_0(A)=\Ccal\langle d_i, b_i+c_i, b_ic_i\rangle.
$$



The notation reflects the fact that the submodule $\Lcal_0(A)$ of $\Lcal$ depends on the matrix~$A$.
Let us check that  $(\Lcal, \Lcal_0(A))$   satisfies the axioms of  $(a)$, $(b)$  $(c)$, $(c')$, $(d)$ and $(e)$ of Definition~\ref{LieD}(i).
To this purpose, we first compute the product of two generic elements
$$
x=x_0b_0+x_1b_1+x_2b_2\qquad  and\qquad  y=y_0b_0+y_1b_1+y_2b_2
$$
of $\Lcal$, using the multiplication matrix.
A simple computation yields:
\begin{equation}
    \begin{aligned}
 \relax    [x y]& = x_0y_0d_0+x_1y_1d_1+x_2y_2d_2 \\ &= x_1y_2b_0+x_2y_1c_0+x_2y_0b_1+x_0y_2c_1+x_2y_0b_1+x_0y_1b_2+x_1y_0c_2
    \end{aligned}
\end{equation}
\begin{enumerate}
  \item[(a)] Let us check that $[x x]\in \Lcal_0(A)$.
  Indeed,
  $$
  [x x]=x_0^2d_0+x_1^2d_1+x_2^2d_2+x_0x_1(b_2+c_2)+x_0x_2(b_1+c_1)+x_1x_2(b_0+c_0)\in\Lcal_0(A);
  $$

  \item[(b)] Let us check that $ad_x(y)+ ad^\dagger_x(y)=[x y]+[yx] \in\Lcal_0(A)$.
\begin{eqnarray}
  [x y]+[y x]=& 2x_0y_0d_0+2x_1y_1d_1+2x_2y_2d_2+(x_0y_1+x_1y_0)(b_2+c_2)\cr
  &+(x_0y_2+x_2y_0)(b_1+c_1)+(x_1y_2+x_2y_1)(b_0+c_0)\in\Lcal_0(A)
 \end{eqnarray}

\item[(c)] We   now come to   the Jacobi identity. Besides the generic elements $x$ and $y$ mentioned before, let $z=z_0b_0+z_1b_1+z_2b_2$ and consider the Jacobi sum
\be
(xy)z+(yz)x+(zx)y.\label{eq4:Jidcr}
\ee
Expanding \eqref{eq4:Jidcr} in terms of the component of $x$, $y$ and $z$, one easily get
\begin{eqnarray*}
&&[[x y] z]+[[y  z]  x]+[[zx]    y]\cr\cr&=&\sum_{0\leq i,j,k\leq 2}x_iy_jz_k\big([[b_i  b_j]b_k]+[[b_j  b_k] b_i]+[[b_k  b_i]   b_j]\big).
\end{eqnarray*}
 For each choice of $(i,j,k)\in\{0,1,2\}^3$ we have basically two cases.
\begin{enumerate}
\item[i)] $(i,j,k)$ is either an even or odd permutation of $(0,1,2)$.
In the even case we have
\begin{eqnarray*}
&&[[b_i  b_j] b_k]+[[b_j  b_k] b_i]+[[b_k  b_i] b_j]\cr &=&[b_k  b_k]+[b_i  b_i] +[b_j  b_j]=d_0+d_1+d_2\in\Lcal_0(A).
\end{eqnarray*}
In the odd case:
$$
[[b_i  b_j]  b_k]+[[b_j  b_k]  b_i]+[[b_k  b_i]  b_j=c_kb_k+c_ib_i+c_jb_j\in \Lcal_0(A).
$$
\item[ii)] If $i=j$ then
$$
[[b_i  b_i]  b_k]+[[b_i  b_k]  b_i]+[[b_k  b_i] b_i]=[d_i  b_k]+[(b_j+c_j) b_i]\in \Lcal_0(A).
$$
\item[iii)] The case $i=k$ works the same as in   (ii).
\end{enumerate}
\end{enumerate}


 The $\dag$-reversibility does not hold in general. We compute $[[b_0b_1]b_2]= [b_2b_2] = d_2,$   whereas $[[b_2b_1]b_0] = [c_0b_0],$ so in general we need $[c_ib_i]=d_i,$ which  normally fails.


\end{example}
\begin{example}\label{cr2}
Generalizing Example~\ref{ex:lie220}, let $V$ be a free module over  $\Ccal$ and also let $A:V\otimes V\to V$ be a $V$-valued bilinear form over  $\Ccal$. If $V=\bigoplus_{1\leq i\leq n} \Ccal b_i$, let us denote   $A =(a_{ij})$ for $a_{ij}\in V$, where $b_ib_j=a_{ij}$, and let
$$
\Lcal=\bigoplus_{i<j}\Ccal\cdot a_{ij}
$$

Define
\be
\Lcal_0(A):=\Ccal\cdot \langle a_{ij}+a_{ji},\, a_{ii},\, a_{ij}a_{ji}\rangle\label{eq:LCgen}
\ee
i.e., $\Lcal_0(A)$ is the $\Ccal$-submodule
spanned by the expressions listed in \eqref{eq:LCgen}. Then $(\Lcal, \Lcal_0(A))
$ is a Lie pair. The verification works the same as in the case of $n=3$ (Example \ref{ex:lie220}), so we omit it.
\end{example}

\begin{remark}
If $\Acal$ is an algebra (i.e., with additive inverses) and $c_i=-b_i$ and $d_i=0$, the matrix $C$ as in \eqref{eq:matC} defines the usual cross product (for  $\Acal=\RR$).
\end{remark}

 \subsection{Krasner type}\label{Kras0}

One can also insert some hypergroup theory into the theory of Lie pairs.

\begin{theorem}\label{Kras}$ $
\begin{enumerate}\eroman
    \item  (Inspired by \cite{krasner}) Let $R$ be a semiring, and $G$ a normal multiplicative subgroup of~$R$. Pick $\vep \in R.$ Then $H = R/G$ is a hyper-semiring, and let  $\mcA = \mathcal{P}(R/G)$, i.e., the elements $S \in \mcA $ are unions
    $\cup a_iG$ of cosets of $R$. In other words, if $a\in S$ then $a_ig\in S$ for each $g\in G.$ Addition is defined by $$\boxplus a_iG = \{\sum a_ig_i: g_i \in G\}.$$
     $(\mcA,\mcA_0)^{(\vep)}$ of Theorem~\ref{pLie}, and $\mcA_0 = \{ S \in \mathcal{P}(R/G): \zero \in S\}.$
    $H$
    satisfies all of the axioms of a reversible weak $\preceq_{\subseteq}$-Lie pair,  under the Lie bracket $[aG\, bG] = [a,b]G$.

        \item In (i), we could  take a Lie multiplicative ideal $M$ of $R$ and  instead take  \[
        \mcA_0 = \{ S \in \mathcal{P}(R/G):  S \cap M \ne \emptyset\}.
        \]
    \item Let $R$ be a semiring with an involution $(*)$, and $G$ a normal multiplicative symmetric subgroup of~$R$.  Then the analog of $H$ of (i), in Theorem~\ref{Linv},  is a weak Lie pair,  with
surpassing relation~$\subseteq$.
         \item In (iii), we could  take a symmetric Lie multiplicative ideal $M$ of $R$ and  instead take  $\mcA_0 = \{ S \in \mathcal{P}(R/G):  S \cap M \ne \emptyset\}.$
\end{enumerate}
\end{theorem}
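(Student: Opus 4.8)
The plan is to prove (i) by checking the defining axioms directly, reusing the computations from the proofs of Theorem~\ref{pLie} and Theorem~\ref{Linv} but weakening every appeal to distributivity to an inclusion --- this is exactly why the conclusion is a \emph{weak} $\preceq_\subseteq$-Lie pair rather than a Lie pair, since the power-set hyper-semiring $\mcA=\mathcal P(R/G)$ is only $\preceq_\subseteq$-distributive. First I would set up the ambient pair: by normality of $G$ the coset product is single-valued, $aG\cdot bG=abG$, and with the hyper-addition $\boxplus$ the set $\mcA$ of non-empty unions of cosets is a bimagma with additive identity $\zero_\mcA=\{0\}$; scalars are taken from $\mathcal P(Z(R)/(G\cap Z(R)))$, whose action (multiplication by a single central coset) distributes over $\boxplus$ strictly, so the $C$-linearity axioms \Dref{LieD}(i)(d),(e) will hold as genuine equalities even though distributivity in $\boxplus$ fails. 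Then $\mcA_0=\{S:\zero\in S\}$ is a bimagma ideal --- closed under $\boxplus$ because $\zero=\zero+\zero$, absorbing products because $\zero\,t=\zero$, and $\sum S_i\in\mcA_0\Rightarrow\sum S\,S_i\in\mcA_0$ --- and $\preceq_\subseteq$ is a surpassing relation on $(\mcA,\mcA_0)$: \Dref{ddi}(i),(iii) are monotonicity of $\boxplus$ and of multiplication under $\subseteq$, \Dref{ddi}(ii) is $S_1=S_1\boxplus\{0\}\subseteq S_1\boxplus S_0$ when $\zero\in S_0$, and \Dref{ddi}(iv) is vacuous.

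Next I would treat the bracket $[ST]=ST\boxplus\vep TS$, so that on cosets $[aG\,bG]=abG\boxplus\vep ba G$ (abbreviated $[a,b]G$), and verify \Dref{LieD}(i) together with the $\preceq_\subseteq$-axioms of \Dref{LieD7}. The key point, and the step I expect to be the main obstacle, is that axioms (a) ($\zero\in[SS]$), (b) ($\zero\in[ST]\boxplus[TS]$), and the pre-negation condition \Dref{pnm}(ii) for $\psi\colon S\mapsto\vep S$ (namely $\zero\in S\boxplus\vep S$) all reduce to Krasner's observation that in $R/G$ one may choose coset representatives that are mutually additive inverses; this is precisely where passing to the hyperstructure and using $\vep$ as a surrogate for $-1$ is essential, and for a general semiring one must be attentive to centrality of $\vep$ and to the existence of such representatives. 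Granting this, the Jacobi computation from the proof of Theorem~\ref{pLie} carries over, its distributivity steps now producing inclusions, and yields (c), (c$'$) and the Jacobi $\preceq_\subseteq$-identity \Dref{LieD7}(1)(a) with its reflections; the remaining axioms \Dref{LieD7}(1)(b)--(e) are again just $\preceq_\subseteq$-distributivity and monotonicity of $\ad,\ad^\dag$. Since $\psi$ squares to the identity on $R/G$ when $\vep$ behaves like $-1$, one obtains a strong $\psi$-Lie pair, whence $\dag$- and $\mcL_0$-reversibility by the lemma preceding Theorem~\ref{pLie}; this supplies the ``reversible'' clause.

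For (iii) I would rerun the same scheme with the involution bracket $[ST]=ST\boxplus T^*S$: since $G^*=G$ the involution of $R$ descends to $R/G$ and to $\mcA$, one enlarges $\mcA_0$ to contain the images of all $x\boxplus x^*$, and the Jacobi computation \eqref{eq:jcbst} from the proof of Theorem~\ref{Linv} goes through with $\subseteq$ replacing $=$ (no reversibility is asserted, matching Theorem~\ref{Linv}). Finally, I would deduce (ii) and (iv) from (i) and (iii) by the enlargement principle of \Eref{pro}: replacing $\mcA_0$ by $\mcA_0^M=\{S:S\cap M\neq\emptyset\}$ for $M$ a (symmetric) Lie multiplicative ideal of $R$ still yields a bimagma ideal containing $\mcA_0$, because $\zero\in M$ and because an element of $S\cap M$, fed to the bracket against any $T$ (resp.\ added to any $T$), produces an element of $M$ inside $[ST]$ (resp.\ inside $S\boxplus T$) by the Lie-multiplicative-ideal property; enlarging the null set this way preserves all of the axioms already verified.
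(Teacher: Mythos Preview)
Your proposal is sound and, in substance, follows the same line as the paper: reduce the Lie bracket axioms to the computations already carried out in Theorems~\ref{pLie} and~\ref{Linv}, with equalities weakened to inclusions because the Krasner power-set semiring is only $\preceq_\subseteq$-distributive. The paper's own proof is far terser than yours: it simply invokes \cite[Proposition~10.7]{Row16} for the Lie product on $R$ and \cite[Proposition~5.18]{AGR2} for additive associativity of the hyper-semiring, then declares (ii) and (iv) ``analogous.'' What you add beyond this is an explicit verification that $\mcA_0$ is a bimagma ideal and that $\preceq_\subseteq$ satisfies \Dref{ddi}, a direct derivation of reversibility from $\psi^2=\one$ via the strong $\psi$-Lie pair lemma, and a clean reduction of (ii) and (iv) to (i) and (iii) using the enlargement principle of \Eref{pro}; none of these are spelled out in the paper. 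You also rightly flag the one genuinely delicate point---that axioms (a), (b) and the pre-negation condition hinge on being able to choose coset representatives summing to $\zero$, which for a bare semiring and arbitrary $\vep$ is not automatic---whereas the paper leaves this entirely inside the cited references. In short, your argument is a more self-contained unpacking of the same strategy; the paper trades that detail for pointers to \cite{Row16,AGR2}.
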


\begin{proof}  (i) We get the Lie product in $R$ as in  {\cite[Proposition~10.7]{Row16}}. \cite[Proposition~5.18]{AGR2} yields associativity of addition.

(ii) Analogous to (i).

(iii) $\mcA/G$ is a hyper-semiring, as in \cite{AGR2}. Then we apply (i),
defining
\[
[S_1,S_2] = \{[a_{i1},a_{j2}]: a_{i1}\in S_1,\ a_{j2}\in S_2\},
\]
and have a weak  Lie pair  with
surpassing relation~$\subseteq$.

(iv) Analogous to (iii).
\end{proof}
\begin{remark}
$(\boxplus_i a_iG)(\boxplus_j b_jG) \subseteq \boxplus_{i,j} a_ib_jG$   in each of the Krasner-type constructions, in view of \eqref{Leib1}, which shows that there formally are more terms in the right side of Definition~\ref{ddi}(ii) than the left, and the extra ones are paired off.
\end{remark}

\section{Doubling}\label{dou}
\begin{example}\label{Making}(Abstract doubling of a $C$-module, also see \cite[Example~1.7(iii)]{AGR2}). This is a way to create a   pair   with a negation map, from any $C$-module $\mcA.$
\begin{enumerate}
    \item Define $\widehat{\mathcal A} := \mathcal A
\times \mathcal A$ with  pointwise addition. We think of the first component as a positive copy of $\mcA,$ and the second component as a negative copy of $\mcA.$

 \item Define multiplication in $\widehat{\mcA} $ by the \textbf{twist action}
\begin{equation}\label{mult1}
    (b_1, b_2)(b'_1, b'_2) = (b_1b'_1+b_2b'_2,\, b_1b'_2+b_2b'_1).
\end{equation}


\item $\widehat { \mathcal A }$   has the ``switch'' negation map given
 by $(-)(b_1,b_2)  = (b_2,b_1)$.

  \item If $ \mcA$ is a $C$-module, then $\widehat { \mathcal A }$ is a $\widehat{C}$-module with the respect to the twist action
  $$(c_1,c_2)(b_1,b_2) = c_1b_2 +c_2b_2, c_1b_2+c_2b_1).$$
  Note that $(-)(\one,\zero) = (\zero,\one).$

\item If $f,g: (\mcA,\tT)\to (\mcA',\tT')$ are homomorphisms, then  define $(f,g):  \widehat{\mathcal A}\to   \widehat{\mathcal A}'$ given by $(f,g)(b_1,b_2) = (f(b_1)+g(b_2),f(b_2)+g(b_1)).$
\end{enumerate}
  \end{example}

 \begin{lemma}\label{supdd}
  Any doubled d-bimagma $\widehat{\mcA}$ is $\Z_2$-graded as $(\mcA \times \{ \zero \}) \, \oplus (\{ \zero \} \times \mcA) $.
  \end{lemma}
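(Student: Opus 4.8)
The statement to prove is Lemma~\ref{supdd}: any doubled d-bimagma $\widehat{\mcA}$ is $\Z_2$-graded as $(\mcA \times \{\zero\}) \oplus (\{\zero\} \times \mcA)$. The plan is to unwind the definitions in Example~\ref{Making}. First I would observe that as a $C$-module, $\widehat{\mcA} = \mcA \times \mcA$ with pointwise addition, so it is immediate that $\widehat{\mcA} = (\mcA \times \{\zero\}) \oplus (\{\zero\} \times \mcA)$ as a direct sum of $C$-submodules: any $(b_1,b_2)$ decomposes uniquely as $(b_1,\zero) + (\zero, b_2)$. Set $\widehat{\mcA}_{\bar 0} := \mcA \times \{\zero\}$ and $\widehat{\mcA}_{\bar 1} := \{\zero\} \times \mcA$, indexing by $\Z_2 = \{\bar 0, \bar 1\}$.

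The substance is checking that this decomposition is compatible with the multiplication given by the twist action \eqref{mult1}, i.e.\ that $\widehat{\mcA}_{\bar i}\,\widehat{\mcA}_{\bar j} \subseteq \widehat{\mcA}_{\overline{i+j}}$ for all $\bar i, \bar j \in \Z_2$. I would verify the four cases directly from $(b_1,b_2)(b_1',b_2') = (b_1b_1'+b_2b_2',\, b_1b_2'+b_2b_1')$: for $(b,\zero)(b',\zero) = (bb',\zero) \in \widehat{\mcA}_{\bar 0}$; for $(b,\zero)(\zero,b') = (\zero, bb') \in \widehat{\mcA}_{\bar 1}$; for $(\zero,b)(b',\zero) = (\zero, bb') \in \widehat{\mcA}_{\bar 1}$; and for $(\zero,b)(\zero,b') = (bb',\zero) \in \widehat{\mcA}_{\bar 0}$. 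These confirm the grading condition with the indices adding in $\Z_2$.

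There is essentially no obstacle here — the lemma is a direct computation. The only point requiring a word of care is that the d-bimagma structure (distributivity) is what guarantees the grading is a genuine module decomposition respecting the full additive and multiplicative structure, rather than just a set-theoretic splitting; but since $\widehat{\mcA}$ is built as the free-module/twist-action construction of Example~\ref{Making}, distributivity of $\widehat{\mcA}$ is inherited from that of $\mcA$ and need not be reproven. I would close by noting that the $\zero b = b\zero = \zero$ axiom of a bimagma ensures the zero components behave correctly in the products above, so the decomposition is indeed a $\Z_2$-grading.
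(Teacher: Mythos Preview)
Your proof is correct and follows essentially the same approach as the paper: identify $\mcA \times \{\zero\}$ and $\{\zero\} \times \mcA$ as the even and odd parts, and use the twist multiplication together with distributivity to see that products respect the $\Z_2$-grading. The paper's own argument is just a terser version of what you wrote, noting only that distributivity is what lets one decompose multiplication according to the grading.
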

\begin{proof}
$\mcA \times \{ \zero \}$ is the ``$+$'' part, and $ \{ \zero \} \times \mcA $ is the ``$-$'' part. We need d-bimagmas  to decompose multiplication according to the grading.
\end{proof}

\begin{remark}
As in \cite{Row16} one could obtain a pair by
    defining
   $$\widehat{\mcA}_0 =  \Diag := \{(b,b): b \in \mcA\},$$
   noting that $(b_1,b_2)(-)(b_1,b_2) =(b_1,b_2)+(b_2,b_1) = (b_1+b_2,b_1+b_2).$
\end{remark}

\subsection{Doubling
 a   pair}

As in \cite{AGR2},   we rather modify the doubling construction when working in the category of pairs, as follows:

\begin{example}[Doubling
 a   pair]\label{Making8}$ $\begin{enumerate}\eroman
     \item  Given a pair  $(\mcA,\mcA_0)$,
   we  obtain a pair $\widehat{(\mcA,\mcA_0)} := (\widehat{\mcA}, \widehat{\mcA}_0)$ by
    defining
   $$\widehat{\mcA}_0 =  \Diag + \{(b_1,b_2): b_1+b_2 \in \mcA_0\}.$$
     \item If $(\mcA,\mcA_0) $ is a $C$-pair, then $(\widehat{\mcA}, \widehat{\mcA}_0)$ is a $\widehat{C}$-pair under the twist action.
 \end{enumerate}
 \end{example}

 \begin{lemma}\label{idea} If $\mcA_0$ is an ideal of $\mcA,$ then
  $\widehat{\mcA_0}$ as defined in \Eref{Making8} is an ideal of  $\widehat{\mcA}$.
 \end{lemma}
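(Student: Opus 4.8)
The plan is to verify directly that $\widehat{\mcA_0}$, as defined in Example~\ref{Making8}, is closed under addition, closed under scalar multiplication by $\widehat C$, and absorbs products with arbitrary elements of $\widehat{\mcA}$ on both sides, using the twist multiplication \eqref{mult1}. Write a typical element of $\widehat{\mcA_0}$ in the form $(a,a) + (b_1,b_2)$ with $a\in\mcA$ and $b_1+b_2\in\mcA_0$; equivalently, $\widehat{\mcA_0}$ consists of all pairs $(u_1,u_2)$ such that $u_1+u_2\in\mcA_0$, since $\Diag$ contributes $a+a$ which already lies in $\mcA_0$ (as $\mcA_0$ is an ideal, hence a sub-semigroup closed under the doubling of elements)—actually more simply, $(a,a)$ satisfies $a+a\in\mcA_0$ iff $a+a\in\mcA_0$, so it is cleanest to first record the identification
\[
\widehat{\mcA_0} = \{(u_1,u_2)\in\widehat{\mcA} : u_1+u_2\in\mcA_0\}
\]
once we note $\Diag\subseteq$ right-hand side requires $a+a\in\mcA_0$; if that inclusion is not automatic one keeps the two summands separate, which changes nothing essential below. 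Closure under addition is then immediate from $(u_1+v_1)+(u_2+v_2) = (u_1+u_2)+(v_1+v_2)\in\mcA_0$ since $\mcA_0$ is a sub-semigroup.

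Next I would check the ideal (absorption) property. Take $(u_1,u_2)\in\widehat{\mcA_0}$, so $u_1+u_2\in\mcA_0$, and an arbitrary $(b_1,b_2)\in\widehat{\mcA}$. By \eqref{mult1},
\[
(b_1,b_2)(u_1,u_2) = (b_1u_1+b_2u_2,\; b_1u_2+b_2u_1),
\]
and the sum of its two components is $b_1u_1+b_2u_2+b_1u_2+b_2u_1 = b_1(u_1+u_2)+b_2(u_1+u_2) = (b_1+b_2)(u_1+u_2)$ by distributivity in the d-bimagma $\mcA$. Since $u_1+u_2\in\mcA_0$ and $\mcA_0$ is an ideal, $(b_1+b_2)(u_1+u_2)\in\mcA_0$, so the product lies in $\widehat{\mcA_0}$. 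The right-handed case $(u_1,u_2)(b_1,b_2)$ is symmetric. For the $\Diag$ piece separately (if one prefers not to absorb it into the characterization above), one checks $(b_1,b_2)(a,a) = (b_1a+b_2a,\,b_1a+b_2a)\in\Diag$ directly, and $\Diag\cdot\widehat{\mcA}\subseteq\Diag\subseteq\widehat{\mcA_0}$.

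Finally, closure under the $\widehat C$-action under the twist action is verified identically: for $(c_1,c_2)\in\widehat C$ and $(u_1,u_2)\in\widehat{\mcA_0}$, the two components of $(c_1,c_2)(u_1,u_2)$ sum to $(c_1+c_2)(u_1+u_2)$, which lies in $\mcA_0$ because $\mcA_0$ is in particular a $C$-submodule of $\mcA$. I do not expect any real obstacle here; the only mild subtlety is bookkeeping the two ways of presenting elements of $\widehat{\mcA_0}$ (as $\Diag$ plus a ``null-sum'' pair, versus as all pairs with component-sum in $\mcA_0$), and making sure the chosen presentation is stable under the operations—but since $\Diag$ is itself carried into $\Diag$ and null-sum pairs into null-sum pairs, both presentations work, and the computation $(b_1+b_2)(u_1+u_2)$ is the single identity that drives every case.
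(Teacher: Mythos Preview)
Your proposal is correct and follows essentially the same approach as the paper: the paper's entire proof is the single computation that the two components of $(b_1,b_2)(b'_1,b'_2)$ sum to $(b_1+b_2)(b'_1+b'_2)\in\mcA_0$, which is exactly your core step. You are in fact more thorough than the paper---you explicitly handle additive and $\widehat C$-closure and treat the $\Diag$ summand separately (correctly noting that $\Diag\subseteq\{(u_1,u_2):u_1+u_2\in\mcA_0\}$ is \emph{not} automatic, since $a\in\mcA$ is arbitrary and $a+a$ need not lie in $\mcA_0$); your observation $(b_1,b_2)(a,a)=(b_1a+b_2a,\,b_1a+b_2a)\in\Diag$ is the right way to deal with that piece.
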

 \begin{proof}
 By \eqref{mult1}, noting that if $b_1+b_2 \in \mcA_0$ then $$b_1b'_1+b_2b'_2+b_1b'_2+b_2b'_1 = (b_1+b_2)(b'_1+b'_2)\in \mcA_0.$$
 \end{proof}

\begin{proposition}\label{idgr}
Any multilinear identity or $\preceq$-identity of an $\mcA_0$-additive bimagma pair  (See Definition \ref{symsyst}) $(\mcA, \mcA_0)$ also holds in the doubled pair.
\end{proposition}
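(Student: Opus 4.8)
The plan is to reduce everything to $\mcA$ itself by means of the canonical embedding $\iota\colon\mcA\to\widehat{\mcA}$, $b\mapsto(b,\zero)$, combined with \Lref{ml}. First I would verify that $\iota$ is a homomorphism of bimagma pairs: from \eqref{mult1} one has $\iota(b)\iota(b')=(bb'+\zero\cdot\zero,\,b\cdot\zero+\zero\cdot b')=(bb',\zero)=\iota(bb')$, addition is preserved since addition in $\widehat{\mcA}$ is componentwise, and, identifying $C$ with the subsemiring $\{(c,\zero)\}$ of $\widehat{C}$, the twist scalar action gives $(c,\zero)\iota(b)=(cb,\zero)=\iota(cb)$, so $\iota$ is $C$-linear. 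It is a morphism of pairs because $b\in\mcA_0$ forces $b+\zero=b\in\mcA_0$, which is exactly the defining condition for $(b,\zero)\in\widehat{\mcA}_0$ in \Eref{Making8}; and it preserves $\preceq_0$, since $b'=b+z$ with $z\in\mcA_0$ yields $\iota(b')=\iota(b)+\iota(z)$ with $\iota(z)\in\widehat{\mcA}_0$.

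The crux is that $\iota$ carries a $C$-spanning set of $\mcA$ to a $\widehat{C}$-spanning set of $\widehat{\mcA}$. This uses the explicit form of the twist scalar action, which satisfies $(c_1,c_2)(s,\zero)=(c_1s,\,c_2s)$: if $\{s_j\}$ spans $\mcA$ over $C$ and $b_1=\sum_j c^{(1)}_j s_j$, $b_2=\sum_j c^{(2)}_j s_j$, then $\sum_j(c^{(1)}_j,c^{(2)}_j)(s_j,\zero)=(b_1,b_2)$, again because addition in $\widehat{\mcA}$ is componentwise. Moreover, since $(\mcA,\mcA_0)$ is $\mcA_0$-additive, $\widehat{\mcA}_0$ is an ideal of $\widehat{\mcA}$ by \Lref{idea}, so $\widehat{\mcA}$ is itself an $\mcA_0$-additive bimagma pair and \Lref{ml} is applicable to it.

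Granting these, let $f(x_1,\dots,x_m)=g(x_1,\dots,x_m)$ be a multilinear identity (in the bimagma operations and $C$-scalars) holding in $(\mcA,\mcA_0)$. By \Lref{ml} it suffices to check it in $\widehat{\mcA}$ on the spanning set $\iota(\{s_j\})$. For any indices $j_1,\dots,j_m$, since $\iota$ is a $C$-linear bimagma homomorphism, $f\big((s_{j_1},\zero),\dots,(s_{j_m},\zero)\big)=\iota\big(f(s_{j_1},\dots,s_{j_m})\big)$ and likewise for $g$, and these two images agree because the identity holds in $\mcA$. The $\preceq_0$-identity case $f\preceq_0 g$ is identical, with the final equality replaced by $\iota\big(f(s_{j_1},\dots,s_{j_m})\big)\preceq_0\iota\big(g(s_{j_1},\dots,s_{j_m})\big)$, which holds since $f\preceq_0 g$ in $\mcA$ and $\iota$ preserves $\preceq_0$.

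The one point requiring genuine (if light) attention is the spanning claim for $\iota$, as it is what legitimizes passing back to $\mcA$; it depends on the concrete shape $(c_1,c_2)(s,\zero)=(c_1s,c_2s)$ of the twist action rather than on any formal property, and once it is established the remainder is a routine transport of structure along the homomorphism $\iota$.
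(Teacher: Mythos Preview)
Your argument is correct, and it differs from the paper's in an instructive way. Both proofs invoke \Lref{ml} to reduce to a spanning set, but the spanning sets are different. The paper uses the $\Z_2$-grading of \Lref{supdd}: the homogeneous elements $(a,\zero)$ and $(\zero,a)$ span $\widehat{\mcA}$ over $C$, and when one substitutes homogeneous elements into a multilinear monomial the product lands in a single graded component whose ``payload'' is computed entirely inside $\mcA$; since $f$ and $g$ share the same variables, both land in the same component and the identity transfers. Your route instead exploits the $\widehat{C}$-module structure: you observe that $\iota(\mcA)=\mcA\times\{\zero\}$ already spans $\widehat{\mcA}$ over the larger scalar semiring $\widehat{C}$ via the twist action, so only elements of the form $(s_j,\zero)$ need to be checked, and on those everything factors through the homomorphism $\iota$. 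Your approach buys a cleaner reduction (one copy of $\mcA$ rather than two graded pieces) at the cost of changing the base semiring and verifying that $\widehat{\mcA}$ is a genuine $\widehat{C}$-bimagma so that \Lref{ml} applies over $\widehat{C}$; the paper's approach stays over $C$ but implicitly needs the parity bookkeeping for mixed substitutions. Both are short and sound.
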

 \begin{proof}
By Lemma~\ref{ml} we need only check homogeneous elements, and  they are preserved via the grading.
 \end{proof}

  \subsection{Negated Lie pairs from a semiring}

Motivated by Theorem~\ref{pLie},
we construct a  Lie pair from any semiring, with the switch a  negation map.

\begin{example}\label{Making1}(Lie bracket on a doubled pair)\label{bimd}

Building on \Eref{Making8},
we can define the  Lie bracket $$[(x_1, y_1)(x_2, y_2)] = (x_1x_2 + y_1 y_2 +x_2y_1 + y_2x_1,\ x_1 y_2   +y_1x_2  +x_2x_1  +y_2  y_1  ).$$
 \end{example}

 \begin{theorem}\label{Liedb}
 If $\mcA$ is a semiring then the Lie bracket of \Eref{bimd} makes $(\widehat{\mcA},\widehat{\mcA}_0)$ a    $\preceq_0$-Lie pair, where $\widehat{\mcA}_0 = \{(a,a): a \in \mcA\}$.

 If $(\mcA,\mcA_0)$ is a semiring pair over $(C,C_0)$, then $(\widehat{\mcA,\mcA_0})$ of \Eref{Making8} is a \Lie pair.
 \end{theorem}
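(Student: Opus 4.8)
The plan is to verify directly that the bracket on $\widehat{\mcA}$ given in Example~\ref{bimd} satisfies all the Lie bracket axioms of \Dref{LieD}(i), together with the Jacobi $\preceq_0$-identity of \Dref{LieD7}(1), by exploiting the $\Z_2$-grading of Lemma~\ref{supdd} so as to reduce everything to computations in the semiring $\mcA$. The key observation is that the bracket as written is precisely the image, under the doubling construction, of the Lie commutator bracket $[x,y]=xy+\psi(yx)$ coming from Theorem~\ref{pLie} applied to the doubled semiring $\widehat{\mcA}$ with its switch negation map $(-)$: indeed, writing $(x_1,y_1)$ with the convention that the second coordinate is the ``negative'' copy, one checks that $(x_1,y_1)(x_2,y_2)(-)(x_2,y_2)(x_1,y_1)$ expands, via the twist action \eqref{mult1} and the switch $(-)(a,b)=(b,a)$, to exactly the formula displayed in Example~\ref{bimd}. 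So the first sentence of the theorem should follow formally by combining Theorem~\ref{pLie} (which gives a $\psi$-Lie pair, in fact a strong one since the switch squares to the identity, hence $\dag$-reversible and $\mcL_0$-reversible) with Corollary~\ref{Pois1} (which gives a \emph{negated} Lie pair from a semiring pair with a negation map). The only genuinely new content is that $\widehat{\mcA}$ here is built from a bare semiring $\mcA$, not an a priori pair, and that $\widehat{\mcA}_0 = \Diag$; but by the Remark following Lemma~\ref{supdd}, $(b_1,b_2)(-)(b_1,b_2) = (b_1+b_2,b_1+b_2)\in\Diag$, so $\Diag$ plays the role of $\mcA_0$ and the switch is a bona fide negation map on the pair $(\widehat{\mcA},\Diag)$.

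First I would record that $(\widehat{\mcA},\Diag)$ is a semiring pair with negation map: $\widehat{\mcA}$ is a $d$-bimagma (in fact a semiring, with unit $(\one,\zero)$) by the twist-action formulas of Example~\ref{Making}, $\Diag$ is an ideal (Lemma~\ref{idea} with $\mcA_0=\{\zero\}$, or directly), and the switch is a negation map by Example~\ref{Making}(3). Then I would invoke Corollary~\ref{Pois1} with $(\mcA,\mcA_0)\rightsquigarrow(\widehat{\mcA},\Diag)$ to conclude that the Lie commutator $[bb']=bb'(-)b'b$ makes $(\widehat{\mcA},\Diag)$ a negated Lie pair, $\dag$-reversible and $\Diag$-reversible. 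Next I would check that the explicit bracket formula in Example~\ref{bimd} is this commutator, a short direct expansion using \eqref{mult1}; this identifies the structure and gives everything in \Dref{LieD}(i) for free. For the $\preceq_0$-Lie pair claim I would then verify the extra axioms of \Dref{LieD7}(1): bilinearity gives (b),(c),(d) as equalities hence $\preceq_0$ a fortiori, (e) follows since $x\preceq_0 y$ means $y=x+z$, $z\in\Diag$, and $\ad$ is additive and module-multiplicative; and the Jacobi $\preceq_0$-identity (a) in fact holds with equality because in a negated Lie pair the classical Jacobi identity holds exactly — this is part (c) of Definition~\ref{LieD}(i) combined with the negation, or one can cite Remark~\ref{spel3} together with $\Diag$-reversibility. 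Since equality implies $\preceq_0$, we are done with the first assertion.

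For the second sentence, starting from a semiring pair $(\mcA,\mcA_0)$ over $(C,C_0)$, I would form $\widehat{(\mcA,\mcA_0)} = (\widehat{\mcA},\widehat{\mcA}_0)$ as in \Eref{Making8}, so $\widehat{\mcA}_0 = \Diag + \{(b_1,b_2): b_1+b_2\in\mcA_0\}$, and note by Example~\ref{Making8}(ii) that this is a $\widehat{C}$-pair, with $\widehat{\mcA}_0$ an ideal of $\widehat{\mcA}$ by Lemma~\ref{idea}. The argument is the same as before — Corollary~\ref{Pois1} applies verbatim to the semiring pair $(\widehat{\mcA},\widehat{\mcA}_0)$ with its switch negation map — once one checks the compatibility $(-)(\widehat{\mcA}_0)\subseteq\widehat{\mcA}_0$ and $b_0+b_0'\in\widehat{\mcA}_0\Rightarrow$ closure under the bracket, which is immediate from the grading and Proposition~\ref{idgr} (multilinear identities pass to the doubled pair). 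To get a \emph{\Lie} pair one additionally needs $\dag$-reversibility, i.e.\ $\ad^\dag_x\ad_y = \ad_x\ad^\dag_y$; this holds because a negated Lie pair is automatically $\dag$-reversible (it is $\mcL_0$-symmetric by the Remark after the definition of negated Lie pair, and the commutator-from-a-negation construction is strong in the sense of the Lemma preceding Theorem~\ref{pLie}, the switch being an involution). I expect the main obstacle to be purely bookkeeping: carefully matching the sign/coordinate conventions in Example~\ref{bimd} against the twist action \eqref{mult1} so that the displayed four-term formulas really are $bb'(-)b'b$, and confirming that $\widehat{\mcA}_0$ (which is strictly larger than $\Diag$ in the paired case) is exactly what the negation map $x+(-)x$ generates, so that $\preceq_0 = \preceq_{(-)}$ and the hypotheses of Corollary~\ref{Pois1} are literally met. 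None of these steps is deep, but they must be done in the right order to avoid circularity.
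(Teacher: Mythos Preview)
Your approach is essentially the same as the paper's: identify the bracket of Example~\ref{bimd} as the Lie commutator $bb'(-)b'b$ in the doubled semiring with the switch negation, then invoke Corollary~\ref{Pois1}. The paper also offers a parallel direct verification of axioms (a) and (b) and of $\dag$-reversibility ``by symmetry of the definition,'' and for the Jacobi $\preceq_0$-identity simply declares it ``straightforward but lengthy.''

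One minor correction: your claim that ``the Jacobi $\preceq_0$-identity in fact holds with equality because in a negated Lie pair the classical Jacobi identity holds exactly'' is not right. Expanding in the semiring, one finds
\[
[x[yz]] + [y[zx]] \;=\; [[xy]z] \;+\; \big(xzy(-)xzy\big) \;+\; \big(yzx(-)yzx\big),
\]
so there are genuine nonzero null terms on the right; what holds is only $[[xy]z]\preceq_0 [x[yz]]+[y[zx]]$, exactly as in Lemma~\ref{Leib1}(iii). This does not damage your argument, since $\preceq_0$ is all that is claimed, but the equality assertion should be dropped.
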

\begin{proof}  We get the axioms of   \Dref{LieD}(i) by passing to $\hat{A} $ and applying Theorem~\ref{Pois1}.

We could also verify them directly:
$$[(x,y),(x,y)] = ( xx  +  y y  + xy  +  yx ,  xy  + y  x  + xx  +  yy )\in \hat{\mcA}_0 .$$

\begin{equation}\begin{aligned} \relax  [(x_1, y_1),& (x_2, y_2)]+     [(x_2, y_2),(x_1, y_1)] \\ & = (x_1x_2 + y_1 y_2 +x_2y_1 + y_2x_1,\ x_1 y_2   +y_1x_2  +x_2x_1  +y_2  y_1  )\\ & \quad +(x_2x_1 + y_2y_1  + x_1y_2+y_1x_2 ,\ x_2y_1  +y_2x_1    +x_1x_2  + y_1 y_2  )\in \hat{\mcA}_0,\end{aligned}
 \end{equation}
and
$$[ [(x_1, y_1), (x_2, y_2)] (x_3, y_3)]=[ [(x_3, y_3), (x_2, y_2)] (x_1, y_1)]$$ holds by symmetry of the definition.

 To prove the Jacobi $\preceq_0$-identity, we need to show that
 $$[(x_1, y_1), (x_2, y_2)], (x_3, y_3)]\preceq_0 [ (x_1, y_1)[ (x_2, y_2) (x_3, y_3)]]+ [[ (x_2, y_2), (x_3, y_3)(x_1, y_1)]],$$
which is straightforward but lengthy. \end{proof}

$\widehat{\mcA}_0$-reversibility may fail since $[(x_1,y_1)(x_2,x_2)] = (x_1x_2+y_1x_2, x_1x_2+y_1x_2)$ whereas $[(x_2,x_2)(x_1,y_1)] = (x_2x_1+x_2y_1, x_2x_1+x_2y_1).$

\subsection{Doubling  of Lie pairs}

We use the method of doubling to construct a negation map for a Lie pair.

  \begin{theorem}\label{dLie}
 If  $(\mcL,\mcL_0)$ is a  Lie pair, then $(\widehat{\mcL},\widehat{\mcL}_0)$ is a     Lie pair,  and there is a Lie homomorphism $(\mcL,\mcL_0)\to (\widehat{\mcL},\widehat{\mcL}_0)$ given by $y \mapsto (y,0).$

 If $[\phantom{xy}]$ is a $\preceq$-Lie bracket on $(\mcL,\mcL_0)$, then $[\phantom{xy}]$ naturally induces a $\preceq$-Lie bracket on  $(\widehat{\mcL},\widehat{\mcL}_0)$.

  If  $(\mcL,\mcL_0)$ satisfies $\dag$-reversibility (resp.~$\mcL_0$-reversibility), then so does $(\widehat{\mcL},\widehat{\mcL}_0)$.
 \end{theorem}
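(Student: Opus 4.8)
The first thing I would do is fix the bracket on $\widehat{\mcL}$. Regarding $(x,y)\in\widehat{\mcL}$ as a formal difference of a positive and a negative part, I would take the twist extension
\[
[(x_1,y_1)(x_2,y_2)]_{\widehat{\mcL}}:=([x_1x_2]+[y_1y_2],\ [x_1y_2]+[y_1x_2]),
\]
the bracket-analogue of the twist multiplication of \Eref{Making}. It is $\widehat{C}$-bilinear for the twist action whenever $[\,\,]$ is $C$-bilinear, is compatible with the switch negation ($[(-)\hat a\,\hat b]_{\widehat{\mcL}}=(-)[\hat a\,\hat b]_{\widehat{\mcL}}=[\hat a\,(-)\hat b]_{\widehat{\mcL}}$), and restricts to $[\,\,]$ on $\mcL\times\{0\}$; this last fact immediately makes $y\mapsto(y,0)$ a bracket-preserving module homomorphism, and $f(\mcL_0)=\mcL_0\times\{0\}\subseteq\widehat{\mcL}_0$ because $(z,0)=(0,0)+(z,0)$ with $z+0\in\mcL_0$, recalling $\widehat{\mcL}_0=\Diag+\{(b_1,b_2):b_1+b_2\in\mcL_0\}$ from \Eref{Making8}.

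To verify that $(\widehat{\mcL},\widehat{\mcL}_0)$ is a Lie pair I would exploit the $\Z_2$-grading $\widehat{\mcL}=(\mcL\times\{0\})\oplus(\{0\}\times\mcL)$ of \Lref{supdd}, noting that the twist bracket respects it: the bracket of two homogeneous elements is homogeneous, with its nonzero coordinate a single bracket from $\mcL$ (for instance $[(x,0)(0,y')]_{\widehat{\mcL}}=(0,[xy'])$). The multilinear axioms of \Dref{LieD}(i) — skew-symmetry (b), the two Jacobi identities (c) and (c$'$), and (e)/(vi) — then need only be checked on the spanning set $\{(b_i,0)\}\cup\{(0,b_i)\}$ by \Lref{ml}, and there each one collapses coordinatewise to the corresponding axiom of $(\mcL,\mcL_0)$, using that $(z,0)$ and $(0,z)$ lie in $\widehat{\mcL}_0$ exactly when $z\in\mcL_0$. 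The quadratic axiom (a) is handled directly: $[(x,y)(x,y)]_{\widehat{\mcL}}=([xx]+[yy],\ [xy]+[yx])$ has both coordinates in $\mcL_0$ (the first by (a) for $\mcL$, the second by (b) for $\mcL$), hence lies in $\{(b_1,b_2):b_1+b_2\in\mcL_0\}\subseteq\widehat{\mcL}_0$. The strengthened condition in the definition of ``Lie pair'' — that $\sum\hat x_i\in\widehat{\mcL}_0$ forces $\sum\ad_{\hat x_i}$ and $\sum\ad^\dag_{\hat x_i}$ into $\End(\widehat{\mcL},\widehat{\mcL}_0)_0$ — I would get by writing $\sum\hat x_i=(d,d)+(b_1,b_2)$ with $b_1+b_2\in\mcL_0$ and bracketing with $(z^+,z^-)$: the diagonal part produces a diagonal element, and the rest produces $([b_1z^+]+[b_2z^-],\ [b_1z^-]+[b_2z^+])$, whose coordinate sum equals $[(b_1+b_2)(z^++z^-)]\in\mcL_0$.

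For the $\preceq$-statement I would transport $\preceq$ to $\widehat{\mcL}$ by combining the componentwise relation of \Lref{var2}(ii) with $\preceq_0$ on the $\Diag$-enlargement, i.e.\ declaring $\hat a\preceq\hat b$ when $\hat b=\hat c+(d,d)$ for some $d\in\mcL$ and some $\hat c$ dominating $\hat a$ componentwise; for $\preceq=\preceq_0$ this is simply $\preceq_0$ relative to $\widehat{\mcL}_0$. Each $\preceq$-Lie axiom of \Dref{LieD7}(1), once expanded, is a $\preceq$-inequality between coordinatewise finite sums of brackets of coordinates of the arguments, so matching coordinates through the grading and invoking \Dref{ddi}(i) reduces it to the same $\preceq$-axiom in $(\mcL,\mcL_0)$. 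Finally $\dag$-reversibility transfers cleanly: expanding each side of $[[\hat y\hat z]\hat x]_{\widehat{\mcL}}=[\hat x[\hat z\hat y]]_{\widehat{\mcL}}$ (cf.\ \eqref{spel2}) gives four bracket-triples in each coordinate, which are paired off exactly by the four instances of \eqref{spel2} in $\mcL$; and $\mcL_0$-reversibility follows by expanding $[\hat x\hat y]_{\widehat{\mcL}}$ and $[\hat y\hat x]_{\widehat{\mcL}}$ and applying $\mcL_0$-reversibility of $\mcL$ to the four products $[x^{\pm}y^{\pm}]$.

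The one point requiring care throughout is the diagonal summand of $\widehat{\mcL}_0$: since $\widehat{\mcL}_0$ is not just $\{(a,b):a+b\in\mcL_0\}$ but that set together with $\Diag$, one must split off the diagonal contribution before appealing to a property of $\mcL$ — this happens in the non-multilinear axiom (a), in the strengthened Lie-pair condition, in pinning down the induced surpassing relation, and in $\mcL_0$-reversibility. Everything else is routine coordinate arithmetic that the $\Z_2$-grading renders automatic.
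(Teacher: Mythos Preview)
Your approach is essentially the paper's: define the twist bracket, check the non-multilinear axiom~(a) by hand, and transfer the remaining (multilinear) axioms via the $\Z_2$-grading of \Lref{supdd}. The paper packages that last step as a single appeal to Proposition~\ref{idgr}, whereas you unwind it via \Lref{ml} on the homogeneous spanning set; these are the same argument. Your treatment of the Lie homomorphism $y\mapsto(y,0)$, of the strengthened Lie-pair condition, and of the induced $\preceq$ is more explicit than the paper's, but follows the same lines.

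There is one genuine gap, in your sketch for $\mcL_0$-reversibility. You write that it ``follows by \dots\ applying $\mcL_0$-reversibility of $\mcL$ to the four products $[x^{\pm}y^{\pm}]$,'' but $[\hat x\hat y]_{\widehat{\mcL}}\in\widehat{\mcL}_0$ does \emph{not} force any of the individual brackets $[x_iy_j]$ into $\mcL_0$; it only says $([x_1y_1]+[x_2y_2],\,[x_1y_2]+[x_2y_1])$ lies in $\Diag+\{(b_1,b_2):b_1+b_2\in\mcL_0\}$, and with the diagonal summand present you cannot even conclude that the coordinate sum $[(x_1+x_2)(y_1+y_2)]$ is in $\mcL_0$. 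Unlike $\dag$-reversibility, $\mcL_0$-reversibility is a conditional rather than a multilinear identity, so neither \Lref{ml} nor Proposition~\ref{idgr} applies. (The paper's proof is equally silent here --- it subsumes everything else under ``multilinear,'' which this is not --- so you are not missing something the paper supplies; but your proposed mechanism for this particular clause does not work as stated.)
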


\begin{proof}  We verify the axioms of   \Dref{LieD}(i).
$$[(x,y)(x,y)] = ([xx] + [y y] ,[ xy] +[y  x] )\in \mcA_0 \times \mcA_0 \subseteq \hat{\mcA}_0.$$

 $$[(x_1,x_2)(y_1,y_2)] +[(y_1,y_2)(x_1,x_2)] = $$ $$ ([x_1y_1]+[x_2y_2],[x_1y_2]+[x_2y_1]) +([y_1x_1]+[y_2x_2],[y_2x_1]+[y_1x_2]) \in  \hat{\mcA}_0.$$

If $y_1+y_2 \in \mcA_0$ then $[(x_1,x_2)(y_1,y_2)] \in \hat{\mcA_0}$ since $$[x_1y_1 ] + [x_2 y_2]+ [x_1 y_2]   +[x_2y_1]  = [(x_1+x_2)(y_1+y_2)]\in {\mcA}_0. $$

 The other defining identities (as in \Rref{spel0}) and $\preceq$-identities (as in \Rref{spel3}) are  multilinear and thus pass to $(\widehat{\mcL},\widehat{\mcL}_0)$ by \Lref{idgr}.
 \end{proof}

 \begin{importantnote}
 The doubled Lie pair need not be a negated  Lie pair even though it has a  negation map. Indeed, $$(-)[(x_1,x_2)(y_1,y_2)]= (-)([x_1y_1]+[x_2y_2],[x_1y_2]+[x_2y_1]) = ([x_1y_2]+[x_2y_1],[x_1y_1]+[x_2y_2])$$ whereas
 $$[(y_1,y_2)(x_1,x_2)]=  ([y_1x_1]+[y_2x_2],[y_2x_1]+[y_1x_2]).$$
 \end{importantnote}

\section{Representing Lie pairs inside semiring pairs}\label{PBW}\label{sec:PBWx}

As always, we work with pairs over $(C,C_0).$ Our goal in this section is
to embed a Lie pair in an appropriate associative pair. In order to obtain such a pair, we need to consider tensor pairs.

\subsection{The tensor d-bimagma and free Lie pairs}\label{sec:tsflp}

We would like to add free Lie pairs to the list of Example~\ref{univex}. We need a preliminary.
Tensor products of modules over semialgebras can be defined in the usual universal way, cf.~\cite{Ka1}.  But we do not require this generality.

\begin{example}\label{tens}$ $
\begin{enumerate}
    \item
When constructing the tensor d-bimagma $T(V)$, rather than having it   associative, we take the free d-bimagma
given by tensor multiplication of monomials, cf.~\Eref{univex}. In other words,    let $V^{\otimes m}$
denote all tensor powers of~$V$ over $C$, distinguished by parentheses, in the sense that $(V \otimes V)\otimes V$ and $V \otimes (V \otimes V)$ are distinct; for example, $$V^{\otimes 3}: = (V \otimes V)\otimes V \ \oplus \ V \otimes (V \otimes V).$$

To emphasize nonassociativity, we put parentheses around each monomial. We set
$T(V): =  \bigoplus_{m\ge 1} V^{\otimes m}$, with multiplication  defined by juxtaposition, i.e., define
$((h_1)(h_2)) = (h_1) \otimes (h_2),$  for monomials $(h_1)$ and $(h_2)$. For example if
$(h_1),(h_2) \in V^{\otimes 2}$ then writing $(h_i) = (v_i \otimes w_i)$ we get $$(h_1)(h_2) = (v_1 \otimes w_1)\otimes (v_2 \otimes w_2).$$

Thus $V^{\otimes m}$ is spanned over tensor products of the $x_i;$ these
are customarily called \textbf{pure simple tensors}. A \textbf{simple tensor} is a  {pure simple tensor} with a coefficient from $C$.

We   form a d-bimagma pair $(T(V),T(V)_0 )$ over a pair $(C,C _0 )$  by putting $T(V)_0 $ to be the subspace of $T(V)$ spanned by:

\begin{enumerate}
    \item[(a)] all simple tensors containing a factor in $V_0$, and
    \item[(b)] all   simple tensors with coefficients from $C_0,$
\end{enumerate}
\noindent clearly an ideal of  $T(V).$ Note that (b)  is $0$ when $C_0=0$.

    \item
For the associative case,
 let $\bar V^{\otimes m}$
denote all associative tensor powers of~$V$ over~$C$, written without parentheses, and
$\bar T(V): =  \bigoplus_{m\ge 1} \bar V^{\otimes m}$, with multiplication defined by
$h_1h_2 = h_1 \otimes h_2,$  for monomials $h_1$ and $h_2$. $\bar T(V)$ is isomorphic to the \textbf{free associative algebra} over a basis of $V.$
\end{enumerate}\end{example}

We  also want to make such a construction with vector space pairs. Let us consider $(V,V_0)=\bigoplus _{i\in I} (C,C_0)\cdot x_i $ be the free $(C,C_0)$-module, with
basis $B=\{x_i: i\in I\},$ cf.~\Eref{univex}.

\begin{remark}\label{adm1}$ $
    \begin{enumerate}\eroman
 \item      We could take $C_0=0$ if we want.

         \item
         In the other direction given a free $C$-module $V$,
we could pass to $\tilde V$ and $\tilde C$, to reduce to the case that $(-)\one \in C.$
    \end{enumerate}
\end{remark}

\begin{example}\label{tpow}  Let $(C,C_0)$ be a pair,
 and take $(T(V),T(V)_0 )$  to be the tensor d-bimagma of \Eref{tens}. \begin{enumerate}
    \item (The free $\mcL_0$-additive Lie pair) We take $\mcL$ to be $T(V),$ and $\mcL(V)_0 $ to be the $C$-module generated by $T(V)_0$
and all expressions
\begin{enumerate}
    \item $(\mathbf{x}\mathbf{x})$,
     \item $(\mathbf{x}\mathbf{y}+\mathbf{y}\mathbf{x}),$
   \item $(\mathbf{x}\mathbf{y})\mathbf{z}+(\mathbf{y}\mathbf{z})\mathbf{x}  +(\mathbf{z}\mathbf{x})\mathbf{y},      $
\end{enumerate}
where the $\mathbf{x},\mathbf{y},\mathbf{z}$ are simple tensors.
In view of \Lref{ml}, the axioms of  \Dref{LieD}(i) are satisfied by $(\mcL \mcL_0).$

  \item
If one is willing to modify $C,$ we can define the free Lie pair with basis indexed by any set $I.$ Namely, we take commuting associative indeterminates
$c_{i,j}^k$ over $C,$ and use Lemma~\ref{fgen} to define $\mcL$ over
$C[c_{i,j}^k]$, and formally defining $C_0$ to be the ideal defined by conditions (1)-(4)  of Lemma~\ref{fgen}.

    \item  When $(-)\one \in C$ (which can be attained using Remark~\ref{adm1}), $(T(V),T(V)_0 )^{-}$ is a Lie pair
    by means of Corollary~\ref{Pois1}.
\end{enumerate}
  \end{example}

But we need a surpassing relation $\preceq$ to work with the $\preceq$-adjoint algebra, so we also take a more intricate construction modeled on Proposition~\ref{precvar}.

\begin{example}[The free bilinear $\preceq$-Lie pair]\label{tpowp}
   Re-indexing the subscripts of the $y_i$, we adjoin a formal indeterminate $y_{h_1,h_2,h_3}$
for each $3$-tuple of simple tensors.
We take $\mathcal C$ to be the congruence generated by all pairs
 $$ (h_1 \otimes (h_2\otimes h_3) +y_{h_1,h_2,h_3} ,    h_2 \otimes (h_3 \otimes h_1) + \, h_3\otimes (h_1\otimes h_2))  \,$$ and let $\mathcal U= T(V)/\mathcal C$; i.e., we declare that
 $$ h_1 \otimes (h_2\otimes h_3) +y_{h_1,h_2,h_3} =   h_2 \otimes (h_3 \otimes h_1) +\, h_3 \otimes(h_1\otimes h_2)$$

Let $\mathcal U_0$ be the multiplicative ideal of $ \mathcal U$ generated by all terms
  $$h_i\otimes h_i, \quad h_i \otimes h_j + h_j\otimes h_i,\quad  y_{h_1,h_2,h_3}, \quad i,j,k\{1,2,3\} $$
  where $h_i$ are monomials.
  \end{example}
\begin{theorem}\label{freeL}
$(\mathcal U,\mathcal U_0))$  is a   $\preceq_0$-Lie pair.
Furthermore if $(\mcL ,\mcL _0)$ is a $\preceq_0$-Lie pair then for any $a_i$ in $\mcL$, $i\in I,$ there is a  Lie homomorphism $(\mcL,\mcL_0)\to (\mcL ,\mcL _0)$ sending
$x_i \to \bar x_i: = a_i$ and $ y_{h_1,h_2,h_3}$ to an element $\bar y_{h_1,h_2,h_3}$ of $\mcL _0$ for which $$[\bar h_1 [\bar h_2 \bar h_3]] +\bar y_{h_1,h_2,h_3} = [[\bar h_2 \bar h_3] \bar h_1] \, + [[ \bar h_3 [\bar h_1 \bar h_2].$$
\end{theorem}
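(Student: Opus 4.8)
The plan is to treat the two assertions separately, first verifying that $(\mathcal U,\mathcal U_0)$ is a $\preceq_0$-Lie pair and then establishing the universal mapping property. For the first part I would work through the axioms of \Dref{LieD}(i) together with the Jacobi $\preceq_0$-identity of \Dref{LieD7}(1)(a). By construction $\mathcal U_0$ is the multiplicative ideal of $\mathcal U$ generated by the elements $h_i\otimes h_i$, $h_i\otimes h_j + h_j\otimes h_i$, and the formal indeterminates $y_{h_1,h_2,h_3}$, so axioms (a) and (b) of \Dref{LieD}(i) hold on monomials by inspection, and hence on all of $\mathcal U$ by \Lref{ml} since those axioms are multilinear once (a) is reduced using (b) (exactly as in the proof of \Lref{fgen}). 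Axioms (d), (e) are immediate from $C$-bilinearity of tensor multiplication, and (f) and the condition that $\sum \ad_{x_i}\in\End(\mcL,\mcL_0)_0$ follow because $\mathcal U_0$ is an ideal. The Jacobi $\preceq_0$-identity is precisely what the congruence $\mathcal C$ forces: the defining relation
$$ h_1 \otimes (h_2\otimes h_3) +y_{h_1,h_2,h_3} =   h_2 \otimes (h_3 \otimes h_1) +\, h_3 \otimes(h_1\otimes h_2) $$
says, after passing to the quotient and recalling $[\mathbf u\mathbf v]=\mathbf u\otimes\mathbf v$ here together with the skew relations in $\mathcal U_0$, that $[h_1[h_2h_3]]\preceq_0 [h_2[h_3h_1]]+[h_3[h_1h_2]]$, which by \Rref{spel3} is the form \eqref{J1} of the Jacobi $\preceq_0$-identity. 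Since $y_{h_1,h_2,h_3}\in\mathcal U_0$, we indeed have $\preceq_0$. The $\preceq_0$-subaxioms (b),(c),(d) of \Dref{LieD7}(1) are automatic because $\preceq_0$ is literally equality here on sums (the relation $f\preceq_0 g$ means $g=f+z$, $z\in\mathcal U_0$, and distributivity is an honest identity in the d-bimagma), and (e) follows from \Dref{ddi}(i).

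For the second assertion, let $(\mcL,\mcL_0)$ be a $\preceq_0$-Lie pair and let $a_i\in\mcL$ be chosen for $i\in I$. Since $(T(V),T(V)_0)$ is the free d-bimagma pair on the basis $\{x_i\}$ (\Eref{univex}, \Eref{tens}), there is a unique d-bimagma homomorphism $\Psi: T(V)\to \mcL$ with $\Psi(x_i)=a_i$; here we use that $\mcL$, viewed with its Lie bracket as multiplication, is a (nonassociative) $C$-bimagma. I would then check that $\Psi$ descends through the congruence $\mathcal C$: for each $3$-tuple of monomials $(h_1,h_2,h_3)$ the Jacobi $\preceq_0$-identity in $(\mcL,\mcL_0)$, in the form \eqref{J1}, gives an element $z_{h_1,h_2,h_3}\in\mcL_0$ with
$$ [\,\Psi(h_1)\,[\Psi(h_2)\Psi(h_3)]\,] + z_{h_1,h_2,h_3} = [\,[\Psi(h_2)\Psi(h_3)]\Psi(h_1)\,] + [\,[\Psi(h_3)[\Psi(h_1)\Psi(h_2)]\,]; $$
setting $\bar y_{h_1,h_2,h_3}:=z_{h_1,h_2,h_3}$ and sending $y_{h_1,h_2,h_3}\mapsto\bar y_{h_1,h_2,h_3}$ respects each defining pair of $\mathcal C$, so $\Psi$ factors through a homomorphism $\bar\Psi:\mathcal U\to\mcL$. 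Finally one checks $\bar\Psi(\mathcal U_0)\subseteq\mcL_0$: the generators $h_i\otimes h_i$ map to $[\bar h_i\bar h_i]\in\mcL_0$ by Lie axiom (a), the generators $h_i\otimes h_j+h_j\otimes h_i$ map into $\mcL_0$ by axiom (b), and the $y$'s map into $\mcL_0$ by construction; since $\mathcal U_0$ is the multiplicative ideal they generate and $\mcL_0$ is an ideal with $b\mcL_0\subseteq\mcL_0$ (module-pair axiom / \Dref{LieD}), the whole of $\mathcal U_0$ lands in $\mcL_0$. This makes $\bar\Psi$ a Lie homomorphism with the stated values, and the displayed equation is exactly the factored form of \eqref{J1}.

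The main obstacle I expect is bookkeeping rather than conceptual: verifying that $\Psi$ is well defined on $\mathcal C$ requires a careful match between the congruence relations, which are stated in terms of the non-parenthesized shorthand $h_i\otimes(h_j\otimes h_k)$, and the bracket notation $[\bar h_i[\bar h_j\bar h_k]]$ used in the conclusion — in particular one must be sure that ``monomial'' and ``simple tensor'' are used consistently, and that the multiplicative ideal $\mathcal U_0$ is small enough that $(\mathcal U,\mathcal U_0)$ does not collapse (i.e.\ $\mathcal U\ne\mathcal U_0$, cf.~\Dref{LieD}), which one sees by exhibiting the quotient map onto, say, the free Lie pair of \Eref{tpow}(1) or any nonzero $\preceq_0$-Lie pair. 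A secondary point worth spelling out is that the Jacobi $\preceq_0$-identity is only required in the rewritten form \eqref{J1} valid for reversible Lie pairs, so one should either assume $\mcL_0$-reversibility throughout or carry the unrewritten adjoint-operator form of \Dref{LieD7}(1)(a); I would note this explicitly and use \Lref{LLieD} to move between the equivalent formulations as needed.
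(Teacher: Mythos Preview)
Your proposal is correct and follows essentially the same strategy as the paper: convert the Jacobi $\preceq_0$-identity into an honest identity by means of the auxiliary indeterminates $y_{h_1,h_2,h_3}$, exactly the mechanism of Proposition~\ref{precvar}, and observe that the remaining Lie axioms are ordinary identities and hence preserved under substitution. The paper's proof is a two-sentence sketch that appeals to Proposition~\ref{precvar} rather than unpacking the descent of $\Psi$ through $\mathcal C$ or the inclusion $\bar\Psi(\mathcal U_0)\subseteq\mcL_0$ as you do; the only point the paper adds that you omit is the explicit remark that uniqueness of the homomorphism is \emph{not} claimed, since several elements of $\mcL_0$ may witness a given surpassing.
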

\begin{proof}
All the relations except the Jacobi  $\preceq$-identity can be written as identities just in terms of $\mcL$ and $\mcL_0$, so are preserved under substitution. The only difficulty is the Jacobi  $\preceq$-identity, which as in Proposition~\ref{precvar} we rewrite as an identity by inserting the extra term from $\mcL_0$.
(We did not claim uniqueness, since several terms of $\mcL_0$ might provide equality.)

\end{proof}

\begin{remark}
There is a natural map from the   degree 2 part of the exterior semialgebra as in \cite{GaR} to the Lie pair of Example~\ref{cr2}. In fact   we can construct a congruence  of  $\Lcal\otimes\Lcal$  which provides the map to $\Lcal$.
\end{remark}

\subsection{Lie sub-pairs}

\begin{definition}\label{inc}$ $\begin{enumerate}\eroman
 \item    A \textbf{weak $\psi$-Lie sub-pair} of a bimagma pair $ (\mcA,\mcA_0)$ with a pre-negation map $\psi$ is a  sub-pair  $(\mcL,\mcL_0)$,  together with   and a   map $[\phantom{w}]_\psi : \mcL \times \mcL \to \mcA$  satisfying the Lie bracket axioms of \Dref{LieD}(i), as well as the condition
  $$ b_1 b_2 +\psi (b_2b_1) + [b_2 b_1]_\psi \in \mcL_0, \quad \mbox{for all} \  b_1,b_2\in \mcL.$$

    \item A $\preceq$-\textbf{Lie sub-pair} of a   bimagma pair    $ (\mcA,\mcA_0)$  with a surpassing map is a  sub-pair  $ (\mcL,\mcL_0)$,  together with a bilinear map $[\phantom{w}]: \mcL \times \mcL \to \mcA$   satisfying the Lie bracket axioms of \Dref{LieD}(i), as well as the condition $$ b_1 b_2 \preceq_0 b_2b_1 + [ b_1 b_2 ], \quad \mbox{for all} \ b_1,b_2\in \mcL.$$

  \item
  An \textbf{$\psi$-Lie sub-pair} of a bimagma pair $ (\mcA,\mcA_0)$ is a  sub-pair  $ (\mcL,\mcL_0)$,  together with a   map $[\phantom{w}]: \mcL \times \mcL \to \mcA$  satisfying the Lie bracket axioms of \Dref{LieD}(i), as well as the condition
  $$ [b_1b_2] = b_1 b_2  + \psi( b_2) b_1, \quad \mbox{for all} \ b_1,b_2\in \mcL.$$

    {\it We shall call  $[\phantom{w}]$ a bracket, even though we do not require $\mcL $ to be closed under~$[\phantom{w}]$.}
\end{enumerate}
\end{definition}

\begin{lemma}\label{p3}  $ $  \begin{enumerate}\eroman    \item  Any $\psi$-Lie sub-pair  is a \weakLie\
 pair.
   \item For any pre-negation map $\psi$, the bimagma pair  $ (\mcL,\mcL_0)$ is an
 $[\phantom{w}]_\psi $ $\preceq$-sub-pair of itself.
\end{enumerate} \end{lemma}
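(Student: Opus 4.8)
The plan is to verify the two parts of \Lref{p3} directly from the relevant definitions.

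\textbf{Part (i).} Let $(\mcL,\mcL_0)$ be an $\psi$-Lie sub-pair of a bimagma pair $(\mcA,\mcA_0)$. By \Dref{inc}(iii), $(\mcL,\mcL_0)$ is a sub-pair and it comes equipped with a map $[\phantom{w}]:\mcL\times\mcL\to\mcA$ which, by hypothesis, already satisfies \emph{all} the Lie bracket axioms of \Dref{LieD}(i). The only thing to observe is that these axioms have values landing in $\mcL_0$: axioms (a), (b), (c), (c$'$) assert membership of certain combinations of brackets in $\mcL_0$ (via $\End(\mcL,\mcL_0)_0$), and (e) asserts that if $\sum_i x_i\in\mcL_0$ then $\sum_i\ad_y(x_i),\ \sum_i\ad^\dag_y(x_i)\in\mcL_0$. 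These are exactly the requirements in \Dref{LieD}(i) for $(\mcL,\mcL_0)$ to carry an $\mcL_0$-Lie bracket, hence by the definition immediately following \Rref{spel0}, $(\mcL,\mcL_0)$ is a \weakLie\ pair. (One need not invoke the extra identity $[b_1b_2]=b_1b_2+\psi(b_2)b_1$; it is recorded in \Dref{inc}(iii) but is not needed for this conclusion, since the Lie bracket axioms are assumed outright.) So part (i) is essentially a matter of unwinding definitions.

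\textbf{Part (ii).} Here we take the underlying bimagma pair $(\mcL,\mcL_0)$ (which we assume carries the pre-negation map $\psi$ and the surpassing relation $\preceq_0$, or more precisely $\preceq_\psi$), and we must check that, equipped with the bracket $[b_1b_2]_\psi:=b_1b_2+\psi(b_2)b_1$, it is an $[\phantom{w}]_\psi$ $\preceq$-sub-pair of itself in the sense of \Dref{inc}(ii). The sub-pair condition is trivial with $(S,S_0)=(\mcL,\mcL_0)$. By Theorem~\ref{pLie}, applied with $\mcA$ in place of $\Rcal$ (noting $(\mcL,\mcL_0)$ is an $\mcL_0$-additive bimagma pair with pre-negation $\psi$; if one wants a semiring one passes to the appropriate setting, but the Lie bracket axioms only use the bimagma structure), the bracket $[b_1b_2]_\psi$ satisfies the Lie bracket axioms of \Dref{LieD}(i). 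It remains to verify the surpassing condition $b_1b_2\preceq_0 b_2b_1+[b_1b_2]_\psi$. Unwinding: $b_2b_1+[b_1b_2]_\psi = b_2b_1+b_1b_2+\psi(b_2)b_1 = b_1b_2 + \big(b_2b_1+\psi(b_2b_1)\big)$, using \Dref{pnm}(i) to rewrite $\psi(b_2)b_1=\psi(b_2b_1)$. Since $b_2b_1+\psi(b_2b_1)\in\mcL_0$ by \Dref{pnm}(ii), the element $b_2b_1+[b_1b_2]_\psi$ is obtained from $b_1b_2$ by adding an element of $\mcL_0$, which is precisely the definition of $b_1b_2\preceq_0 b_2b_1+[b_1b_2]_\psi$.

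\textbf{Expected obstacle.} There is no deep obstacle; the argument is bookkeeping. The one genuine point of care is in part (ii): making sure the correct variant of the surpassing relation ($\preceq_\psi$ versus $\preceq_0$) is in force and that the rewriting $\psi(b_2)b_1=\psi(b_2b_1)$ is licensed by \Dref{pnm}(i), so that the ``extra'' term $b_2b_1+\psi(b_2b_1)$ is recognizably a null element. One should also note that \Dref{inc}(ii) is stated for a bilinear bracket, so one uses that $[\phantom{w}]_\psi$ is $C$-bilinear, which follows from bilinearity of multiplication and \Dref{pnm}(i), together with the module-multiplicativity of $\psi$ built into \Dref{pnm} — a routine check. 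Everything else is immediate from the cited definitions and Theorem~\ref{pLie}.
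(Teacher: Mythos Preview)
Your part (ii) is essentially identical to the paper's proof: both compute $b_2b_1+[b_1b_2]_\psi = b_1b_2+(b_2b_1+\psi(b_2b_1))$ via \Dref{pnm}(i) and then invoke \Dref{pnm}(ii) to conclude $b_1b_2\preceq_0 b_2b_1+[b_1b_2]_\psi$.

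For part (i), however, you and the paper do different things. You read the conclusion literally (``quasi Lie pair'') and simply observe that the Lie bracket axioms of \Dref{LieD}(i) are assumed outright in \Dref{inc}(iii), so there is nothing to prove. The paper, by contrast, does not argue this way at all; its proof is the computation
\[
b_1 b_2 +\psi (b_2b_1) + [ b_2 b_1 ] \;=\; b_1 b_2 +\psi(b_2b_1) + b_2 b_1 +\psi (b_1b_2) \;=\; (b_1 b_2 + b_2 b_1) +\psi( b_1 b_2 + b_2 b_1 )\in \mcL_0,
\]
where the first equality substitutes $[b_2b_1]=b_2b_1+\psi(b_1)b_2=b_2b_1+\psi(b_1b_2)$ from \Dref{inc}(iii). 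This is precisely the membership condition defining a \emph{weak $\psi$-Lie sub-pair} in \Dref{inc}(i). So the paper's proof really establishes that every $\psi$-Lie sub-pair (sense (iii)) is a weak $\psi$-Lie sub-pair (sense (i)); the phrase ``quasi Lie pair'' in the lemma's statement appears to be an artifact of the macro \verb|\weakLie| rather than the intended target. Your argument is formally valid for the statement as printed, but it misses the actual content the paper proves, and you should be aware of this discrepancy between the stated conclusion and the paper's computation.
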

 \begin{proof} (i)\ \begin{equation}
     \begin{aligned}
      b_1 b_2 +\psi (b_2b_1) + [ b_2 b_1 ] & =  b_1 b_2 +\psi(b_2b_1) + b_2 b_1 +\psi (b_1b_2) \\ &= b_1 b_2   + b_2 b_1 +\psi( b_1 b_2   + b_2 b_1 )\in \mcL_0.
     \end{aligned}
 \end{equation}

       (ii) $ b_1 b_2 \preceq_0 b_1 b_2 + b_2b_1 +\psi(b_2b_1)= b_2b_1 + [ b_1 b_2 ]_\psi $.
       \end{proof}

   \begin{importantnote}\label{P4}
    Lemma~\ref{p3} is applicable quite generally, since  one can pass to the doubled bimagma pair and even take $\psi$ to be multiplication by $(\zero,\one)$. An  instance where one needs to take \weakLie\ sub-pairs: We want to view the free Lie pair inside the free negated associative pair $(T(\Lcal),T(\Lcal)_0)$, which we obtain by doubling in \Rref{adm1}. If we send $x \mapsto (x,0)$, then $[xy] \mapsto ([xy],0)$ whereas $[(x,0),(y,0)] = (xy,yx),$
    which is different. By adjoining all elements of the form $(xy +[yx],yx)$
    and $(xy, [xy]+yx)$ to $T(\Lcal)_0$ we have a \weakLie\ sub-pair.
   \end{importantnote}

\subsubsection{The weak adjoint   morphism}

  Following classical Lie theory, we want to represent Lie pairs inside semiring pairs. The following observation is easy.

  \begin{proposition}[{\cite[Proposition~10.6]{Row16}}] For any $\psi$-Lie pair $(\mcL,\mcL_0)$, there  are weak Lie morphisms
  $\ad: (\mcL,\mcL_0) \to \End (\mcL, \mcL_0)_\psi$, given by $b \mapsto \ad_b$, and, for $\psi$ invertible,  $\ad^\dag: (\mcL,\mcL_0) \to (\End \mcL,\End \mcL_0)_{\psi^{-1}}$, given by $b \mapsto \psi \ad^\dag_b$.
\end{proposition}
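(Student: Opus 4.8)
The plan is to verify the two ingredients behind \Dref{morp}: that the target is genuinely a $\psi$-Lie pair, and that $\ad$ (resp.\ $b\mapsto\psi\,\ad^\dagger_b$) is both a weak morphism in the sense of \Dref{symsyst} and compatible with the Lie brackets. First I would upgrade $\End\mcL$ to a semiring $\psi$-pair. The pre-negation map $\psi$ on $\mcL$ induces the endomorphism $f\mapsto\psi\circ f$ of $\End\mcL$; it is a semigroup endomorphism, preserves $\End\mcL_0$ because $\psi(\mcL_0)\subseteq\mcL_0$, and satisfies the two-sidedness of \Dref{pnm}(i) on the subalgebra generated by $\AD(\mcL)$ (each $\ad_x$ and $\ad^\dagger_x$ commutes with $\psi$, and in the scalar case of \Eref{vep} this is automatic). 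Hence $(\End\mcL,\End\mcL_0)$ is a semiring $\psi$-pair, and Theorem~\ref{pLie} makes $\End(\mcL,\mcL_0)_\psi$ a $\psi$-Lie pair under the commutator bracket $[fg]_\psi=f\circ g+(\psi\circ g)\circ f$; when $\psi$ is invertible the same construction with $\psi^{-1}$ supplies $(\End\mcL,\End\mcL_0)_{\psi^{-1}}$.

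Next, since the bracket of a Lie pair is $C$-bilinear one has $\ad_{b+b'}=\ad_b+\ad_{b'}$, and the scalar-homogeneity axiom of \Dref{LieD}(i) gives $\ad_{cb}=c\,\ad_b$, so $\ad$ is a $C$-module homomorphism; the identical facts for $\ad^\dagger$ together with $\psi(\mcL_0)\subseteq\mcL_0$ show $b\mapsto\psi\,\ad^\dagger_b$ is one too. That $\ad$ is a weak morphism is exactly the closure clause built into the definition of a Lie pair: $\sum_i b_i\in\mcL_0$ forces $\sum_i\ad_{b_i}\in\End(\mcL,\mcL_0)_0$, and applying this to a one-term sum gives $\ad_{x_0}(\mcL)=[x_0\mcL]\subseteq\mcL_0$, i.e.\ $\ad(\mcL_0)\subseteq\End\mcL_0$. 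The same clause for $\ad^\dagger$, composed with $\psi$, gives the weak-morphism property for $b\mapsto\psi\,\ad^\dagger_b$.

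It remains to match the brackets, which is the crux. Evaluated on an arbitrary $z\in\mcL$, the desired relation reads that $\ad_{[xy]}(z)=[[xy]z]$ should agree, in the appropriate (weak, resp.\ $\preceq_\psi$) sense, with $[\ad_x,\ad_y]_\psi(z)=\ad_x(\ad_y(z))+(\psi\circ\ad_y)(\ad_x(z))=[x[yz]]+\psi([y[xz]])$. This is precisely the Jacobi $\mcL_0$-identity \eqref{spel}: one rewrites $[y[zx]]$ using axiom~(b) of \Dref{LieD}(i) (so that $[y[zx]]+[y[xz]]\in\mcL_0$ after applying the closure axiom) and then pairs off the residual $\psi$-images via \Dref{pnm}(ii). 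The resulting identity is the Leibniz $\psi$-identity packaged in Lemma~\ref{Leib1}(iii), and because cancellation is unavailable it naturally takes the form of the surpassing relation $\preceq_\psi$ rather than of strict equality — this absence of negation in the bracket-matching step is the main obstacle, and is exactly what \cite[Proposition~10.7]{Row16} handles.

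Finally, the assertion for $\ad^\dagger$ follows by applying everything above to the reflected Lie pair $(\mcL,\mcL_0)^\dagger$ of \Dref{LieD8}, whose adjoint map is $b\mapsto\ad^\dagger_b$: its Jacobi $\mcL_0$-identity is the reflected Jacobi identity (c$'$) of \Dref{LieD}(i), and the invertibility of $\psi$ is used to transfer the $\dagger$-twist into the $\psi^{-1}$-twisted commutator, yielding the weak Lie morphism $\ad^\dagger\colon(\mcL,\mcL_0)\to(\End\mcL,\End\mcL_0)_{\psi^{-1}}$, $b\mapsto\psi\,\ad^\dagger_b$.
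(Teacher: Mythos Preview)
Your plan is essentially the paper's: the paper's entire proof is the single sentence ``We verify the conditions of Definition~\ref{morp}. (i) and (ii) are immediate, and (iii) follows from the Jacobi $\mcL_0$-identity,'' and you are unpacking exactly those items --- module-homomorphism and null-preservation from the bilinearity and closure clauses built into the definition of a Lie pair, and bracket compatibility from axiom~(c) of \Dref{LieD}(i). Your extra work (realizing the target as a $\psi$-Lie pair via Theorem~\ref{pLie}, and treating $b\mapsto\psi\,\ad^\dagger_b$ by passing to the reflected pair and the reflected Jacobi identity~(c$'$)) fills in what the paper leaves tacit.

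Two small remarks. First, your appeal to Lemma~\ref{Leib1}(iii) is a detour: that lemma concerns the associative commutator $[x,y]_\psi=xy+\psi(yx)$ in a semiring, not $\ad_{[xy]}$ for an abstract Lie bracket; the relation you actually need among $\ad_{[xy]}$, $\ad_x\ad_y$, and $\ad_y^\dagger\ad_x$ \emph{is} the Jacobi $\mcL_0$-identity itself, and the paper invokes it directly without routing through Leibniz. Second, your caveat that the bracket match comes out only up to $\preceq_\psi$ rather than on the nose is a fair observation about the statement --- Definition~\ref{morp} literally asks for $f([b_1b_2])=[f(b_1)f(b_2)]$ --- but the paper's one-line proof does not address this either, so you are not diverging from the paper's argument, only being more candid about the same point.
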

\begin{proof}
We verify the conditions of Definition~\ref{morp}. (i) and (ii) are immediate, and (iii) follows from the Jacobi $\mcL_0$-identity. \end{proof}

  Clearly $(\AD_{\mcL},\AD_{{\mcL}_0})$ is a pair. We would like to say that it is a Lie pair under the obvious candidate for Lie bracket, namely $[\ad_x \ad_y]: = \ad_x\ad_y + \ad^\dag_y \ad_x,$ but unfortunately this need not be closed.

\subsection{PBW Theorems for Lie pairs}\label{subsec:PBWy}

Throughout this section  suppose that $(\mcL,\mcL_0)$ is a  Lie pair, where $\mcL$ is also a free $C$-module with basis $\{x^i: i \in J\},$ and $\mcL_0$ is the submodule with basis $\{x^i: i \in J_0 \subset J\}$.
Reversing the direction of Theorem~\ref{pLie}, we want a universal enveloping construction of a  semiring pair from the Lie pair $(\Lcal, \Lcal_0)$. In classical theory this is the celebrated PBW (Poincare-Birkhoff-Witt) Theorem.

For Lie pairs there are three possible versions $\iota: (\Lcal, \Lcal_0) \to U$ where $U$ is respectively $U_{\psi}(\Lcal, \Lcal_0),$ $U_\preceq(\Lcal, \Lcal_0)$, $U_\vep(\Lcal, \Lcal_0),$ depending on which type of Lie pair and which type of morphism $\iota$ we use (resp.~ weak $\vep$-Lie morphism,  $\preceq$-Lie morphism, $\vep$-Lie homomorphism), which we fix in the next definition.

\begin{definition}
   {\bf Universal   Property.} If $(\Acal, \Acal_0)$ is any associative pair given together with a morphism $f:(\Lcal,\Lcal_0)\to (\Acal,\Acal_0)$ such that $f$ satisfies resp.~(i), (ii), (iii) of \Dref{inc}, then there is a   unique respective morphism $\phi_f:\Ucal(\Lcal, \Lcal_0)\to (\Acal,\Acal_0)$ such that $f=\phi_f\circ \iota$.
\end{definition}

Note that we did not require $\iota$ to be injective; this will be examined each time.
The reduction techniques used in classical Lie theory become unusable without  cancellation, but in the semialgebra case we can often apply a degree argument to the elements of the tensor algebra in the following situation, since we only adjoin monomials of degree $\ge 2$ in the $x^i$ to $\mcA_0.$

\begin{definition}\label{LZS} A semigroup
  $(\mcA,\zero)$ satisfies the    \textbf{lacks zero sums} (LZS) property if the sum of nonzero elements of $\mcA$ cannot equal $\zero.$
\end{definition}

The LZS property will be the key to obtaining an injection in Theorems~\ref{PBW0} and \ref{thm:PBW}.
   \subsubsection{The weak $\psi$-version of PBW}

\begin{theorem}\label{PBW0} Suppose $(\mcL,\mcL_0)$  is a $\psi$-Lie pair.
 Let $V = \mcL,$ extended by a formal set of indeterminates $Y = \{ y_{i,j}: i,j\in J\}.$ Define $U_{\wk;\psi}(\mcL)=T(V)$ using the construction of \Eref{tpow}, and, identifying $x $ with $\iota(x)$ for $x$ in $\mcL,$ let
$U_{\wk;\psi}(\mcL)_0$ be the $C$-submodule generated by $\mathcal L(V)_0$ and  $$\{  x^ix^j + \psi (x^j)   x^i + [x^j x^i] : i,j \in I  \}.$$ Define $U_{\wk; \psi}(\mcL, \mcL_0) := (U_{\wk;\psi}(\mcL), U_{\wk;\psi}(\mcL)_0). $ It is worth noticing that only the null part depends on $\psi.$

\begin{enumerate}

    \item  $U_{\wk; \psi}(\mcL, \mcL_0) $  is a   $\psi$-Lie pair, as in Theorem~\ref{pLie}.

    \item There is a universal weak $\psi$-Lie morphism   $\iota_ \psi \:(\mcL,\mcL_0)\to U_{\wk; \psi}(\mcL, \mcL_0) $ given by $ x^i \mapsto   x^i,$   satisfying the Universal  Property in this setting.

  \item The universal  $\iota_ \psi$ is $\mcL_0$-injective when $\mcL$  satisfies LZS.
\end{enumerate}
\end{theorem}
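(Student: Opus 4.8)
The plan is to treat the three assertions in sequence, since each builds on its predecessor. For part~(1), I would first observe that $U_{\wk;\psi}(\mcL)=T(V)$ is, by \Eref{tpow}(1), the free $\mcL_0$-additive Lie pair on $V$, so its bracket already satisfies all the Lie bracket axioms of \Dref{LieD}(i); the only thing to check is that enlarging the null part from $\mcL(V)_0$ to $U_{\wk;\psi}(\mcL)_0$ does not destroy those axioms. Since we are only adjoining to the null part, and the Lie bracket axioms are all of the form ``a certain expression lies in $\mcL_0$'', enlarging $\mcL_0$ can only help; I would also check that the adjoined generators $x^ix^j+\psi(x^j)x^i+[x^jx^i]$ span an ideal stable under the bracket, using that $\psi$ is a pre-negation map (so $\psi$ commutes with multiplication in the sense of \Dref{pnm}(i)) and that $T(V)_0$ is already an ideal. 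The pre-negation map $\psi$ extends to $U_{\wk;\psi}(\mcL)$ by $x\mapsto \psi(x)$ on $V$ and by the tensor action, and the computation of Theorem~\ref{pLie} shows that the bracket so obtained makes the pair a $\psi$-Lie pair.

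For part~(2), the map $\iota_\psi:x^i\mapsto x^i$ is a weak $\psi$-Lie morphism essentially by construction: the relations $x^ix^j+\psi(x^j)x^i+[x^jx^i]\in U_{\wk;\psi}(\mcL)_0$ are exactly the statement that $\iota_\psi$ is a weak $\psi$-Lie sub-pair map in the sense of \Dref{inc}(i). To verify the Universal Property, given an associative pair $(\Acal,\Acal_0)$ and a weak $\psi$-Lie morphism $f:(\mcL,\mcL_0)\to(\Acal,\Acal_0)$, I would use the freeness of $T(V)$ as a d-bimagma pair (from \Eref{univex}, \Eref{tens}): $f$ determines a unique d-bimagma homomorphism $\phi_f:T(V)\to\Acal$ by sending $x^i\mapsto f(x^i)$ and $y_{i,j}$ to whatever element of $\Acal_0$ witnesses $f(x^i)f(x^j)+\psi(f(x^j))f(x^i)+[f(x^i)f(x^j)]\in\Acal_0$; one then checks $\phi_f(U_{\wk;\psi}(\mcL)_0)\subseteq\Acal_0$ by running through the generators of the null part, and $f=\phi_f\circ\iota_\psi$ holds on the $x^i$ hence everywhere by $C$-linearity.

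For part~(3), the claim is $\mcL_0$-injectivity of $\iota_\psi$: if $\sum_k x^{i_k}\in U_{\wk;\psi}(\mcL)_0$ with $i_k\in J$, then $\sum_k x^{i_k}\in\mcL_0$, i.e. all $i_k$ with nonzero coefficient lie in $J_0$. Here is where LZS enters, and this is the step I expect to be the main obstacle. The element $\sum_k x^{i_k}$ lives in degree~$1$ of the graded d-bimagma $T(V)$, while every generator of $U_{\wk;\psi}(\mcL)_0$ other than those coming from $\mcL(V)_0$ in degree~$1$ is a sum of monomials of degree $\ge 2$ (the relations $x^ix^j+\psi(x^j)x^i+[x^jx^i]$ mix degree~$2$ and degree~$1$, since $[x^jx^i]\in V$). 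The delicate point is therefore to show that a degree-$1$ element cannot be ``produced'' by cancelling degree-$2$ contributions against degree-$1$ contributions; LZS (\Dref{LZS}) guarantees precisely that in a sum of nonzero homogeneous pieces nothing vanishes, so one can project onto the degree-$1$ component and conclude that $\sum_k x^{i_k}$ already lies in the degree-$1$ part of $U_{\wk;\psi}(\mcL)_0$, which by inspection is spanned by $\{x^i: i\in J_0\}$ together with brackets $[x^jx^i]$ whose components can be pushed back into $\mcL(V)_0$ via the Lie relations. Making this degree bookkeeping precise — in particular showing the degree-$1$ part of the null submodule equals $\mcL_0$ under the identification — is the crux, and it is exactly where the hypothesis that we only adjoin monomials of degree $\ge 2$ to $\mcA_0$ is used.
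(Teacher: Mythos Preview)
Your approach is essentially the same as the paper's: invoke Theorem~\ref{pLie} for part~(1), verify that $\iota_\psi$ is a weak Lie morphism by construction and use freeness of $T(V)$ for the universal property in part~(2), and run a degree argument with LZS for part~(3). The paper's proof is extremely terse (one sentence per part), and you fill in reasonable detail throughout.

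One small correction on part~(3): your last clause, about the degree-$1$ part of the null submodule being ``spanned by $\{x^i:i\in J_0\}$ together with brackets $[x^jx^i]$ whose components can be pushed back into $\mcL(V)_0$,'' is slightly muddled. The point of LZS is sharper than you state. If a pure degree-$1$ element equals a $C$-linear combination of the generators of $U_{\wk;\psi}(\mcL)_0$, then projecting to degree~$2$ gives $0=\sum c_\alpha g_\alpha^{(2)}$; since the mixed relations have \emph{nonzero} degree-$2$ part $x^ix^j+\psi(x^j)x^i$, LZS forces each such $c_\alpha g_\alpha^{(2)}=0$, hence $c_\alpha=0$ for every mixed-degree generator. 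So the brackets $[x^jx^i]$ never contribute at all, and the degree-$1$ element already lies in $V_0=\mcL_0$ (plus possibly the span of $Y$). This matches the paper's parenthetical remark that ``the lack of negation makes life easier, because there is no ambiguity.'' Your handling of the $y_{i,j}$ as witnesses is reasonable, though in the weak setting (as opposed to the $\preceq$ setting of Theorem~\ref{thm:PBW}) they play no essential role in the relations as stated.
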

\begin{proof} By definition $\iota(\mcL_0)\subseteq U_{\wk;\psi}(\mcL)=T(V).$ Also
$$[x^i ,x^j]_\psi + [x^j x^i] =     x^ix^j + \psi (x^j)   x^i + [x^j x^i]  \in \mcA_0     $$
by definition, so $\iota$ is a weak Lie morphism.
Uniqueness is clear since $\varphi$ must satisfy $\varphi(\iota(x^i)) = f(x_i)$.

It remains to prove that  $\iota$ is $\mcL_0$-injective when $\mcL$  satisfies LZS. This is seen
seen by checking degrees in the tensor semialgebra. Namely,
the degree 1 cannot be in $\mcL_0$ because of the LZS  Property. (Here the lack of negation makes life easier, because there is no ambiguity!)

\end{proof}

\subsubsection{The $\preceq$ version of PBW}

Now, given a Lie pair $(\Lcal,\Lcal_0)$  endowed with a surpassing map $\preceq$, we want to construct an associative negated pair $(\Ucal_\preceq(\Lcal,\Lcal_0))$ such that there is a universal $\preceq$-embedding $\iota$  of $(\Lcal,\Lcal_0)$ such that \be xy\preceq \iota([xy])+yx,
\ee
satisfying the $\preceq$-universal  Property.

There exists a map $\phi:\Ucal(\Lcal, \Lcal_0)\to (\Acal,\Acal_0)$ such that $f=\phi\circ \iota$.

This is a bit subtler than before.
  Bergman~\cite{Ber} found a beautiful method of proving the PBW Theorem, related to Gr\"obner bases, to determine bases of algebras, but lacking negation
is both a hindrance and an asset, as we shall see.

\begin{theorem}\label{thm:PBW}
Suppose $(\mcL,\mcL_0)$ is a  $\preceq$-Lie pair satisfying LZS, where $\mcL$ is also a free $C$-module with basis $\{x^i: i \in I\},$ where we order the index set $I$, and
$\mcL_0$ is the submodule with basis $\{x^i: i \in J_0 \subset
J\}$. We refine \Eref{tpow}. Define $T(V)_>$ to be the subspace of $T(V)$ spanned by monomials
$x_{\mathbf{i}}:= x^{i_1}\otimes\cdots\otimes x^{i_m}$, where $i_1> \dots >i_m.$

Let $V = \mcL.$ We take $W_0=\{ y_{j,i}:=   : i<j \in I\}$, and $U_{ \preceq}(\mcL)$  the semialgebra freely generated by $T(V)_>$ and $W_0$, modulo the relations in the congruence generated by the relations $x^j x^i +y_{j,i} = x^ix^j + [x^j x^i] , $ for all $j>i$, and $U_{ \preceq}(\mcL)_0$ the multiplicative ideal of $U_{ \preceq}(\mcL, \mcL_0)$ generated by $W_0$ and $\mcL_0$.
Then
\begin{enumerate}
    \item  $U_{ \preceq}(\mcL;\mcL_0) :=(U_{ \preceq}(\mcL),U_{ \preceq}(\mcL))_0$ defines a $\preceq$-Lie pair,
and there is a universal $\preceq$-morphism $\varphi:U_{ \preceq}(\mcL, \mcL_0)\to (\tilde {\mcL}, \tilde {\mcL}_0)$ given by $ x^i \mapsto \bar x^i.$

   \item Furthermore,  $\iota\:(\mcL,\mcL_0)\to (U_{ \preceq}(\mcL, \mcL_0))$ is $\mcL_0$-injective    when $\mcL$  satisfies LZS.
\end{enumerate}
\end{theorem}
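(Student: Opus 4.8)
The plan is to follow the pattern of Theorem~\ref{PBW0}, but with the surpassing relation $\preceq_0$ in place of weak morphisms, so that the Jacobi discrepancy is absorbed by the fresh indeterminates $y_{j,i}$ rather than merely landing in a null set. First I would dispatch the structural claims: $U_{\preceq}(\mcL)$ is a semiring by construction (a quotient of a free semialgebra by an explicit congruence), $U_{\preceq}(\mcL)_0$ is an ideal since it is defined as the multiplicative ideal generated by $W_0$ and $\mcL_0$ and is visibly a $C$-submodule, and $\preceq_0$ is a surpassing relation on the resulting pair, as recalled after Definition~\ref{ddi}. To equip $(U_{\preceq}(\mcL),U_{\preceq}(\mcL)_0)$ with a $\preceq$-Lie bracket I would pass, if a negation is needed, to the doubled pair as in Important Note~\ref{P4}, and take the Lie commutator of Corollary~\ref{Pois1}; every axiom of Definition~\ref{LieD7}(1) is multilinear except for those already imposed in the congruence, so by Lemma~\ref{ml} it suffices to check them on the generating monomials, where the Jacobi $\preceq$-identity holds precisely because the congruence is generated by the relations $x^jx^i + y_{j,i} = x^ix^j + [x^jx^i]$ together with the null relations defining $W_0$ and $\mcL_0$.

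For the universal property, let $(\mcA,\mcA_0)$ be an associative pair and $f:(\mcL,\mcL_0)\to(\mcA,\mcA_0)$ a $\preceq$-morphism satisfying Definition~\ref{inc}(ii), i.e. $f(b_1)f(b_2)\preceq_0 f(b_2)f(b_1) + f([b_1b_2])$. I would define $\phi_f$ on generators by $\phi_f(x^i) = f(x^i)$ and, for each pair $j>i$, by choosing a witness $w_{j,i}\in\mcA_0$ with $f(x^j)f(x^i) + w_{j,i} = f(x^i)f(x^j) + f([x^jx^i])$ — such a witness exists exactly because $f$ is a $\preceq$-morphism landing in a $\preceq$-Lie sub-pair — and setting $\phi_f(y_{j,i}) = w_{j,i}$. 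Extending multiplicatively and additively, $\phi_f$ respects every defining relation of the congruence and hence descends to $U_{\preceq}(\mcL)$; it carries $W_0$ and $\mcL_0$ into $\mcA_0$, so $\phi_f(U_{\preceq}(\mcL)_0)\subseteq\mcA_0$, and it is a $\preceq$-morphism since it is a homomorphism up to the adjoined null terms. The images of the $x^i$ are forced, and those of the $y_{j,i}$ are forced up to the same null indeterminacy already noted in the proof of Theorem~\ref{freeL}; and $f = \phi_f\circ\iota$ holds by construction.

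The $\mcL_0$-injectivity of $\iota$ is where LZS does the work, and I expect it to be the main obstacle. The idea is to establish a normal form: every element of $U_{\preceq}(\mcL)$ can be written as a $C$-linear combination of strictly decreasing monomials $x^{i_1}\otimes\cdots\otimes x^{i_m}$ (with $i_1>\cdots>i_m$) plus an element of the ideal generated by $W_0$. Existence of such an expression is immediate, by repeatedly rewriting $x^jx^i$ as $x^ix^j + [x^jx^i]$ at the cost of an added $y_{j,i}\in W_0$; the real content is that distinct decreasing monomials remain $C$-independent modulo that ideal, which is a Bergman diamond-lemma computation (cf.~\cite{Ber}): the only overlap ambiguity is the triple product $x^kx^jx^i$ with $k>j>i$, and the two reductions differ by the Jacobi expression, which the $\preceq$-Lie identity forces into $\mcL_0$ plus the $W_0$-ideal — and, crucially, since there is no negation the two reduced forms cannot partially cancel, so confluence is an honest equality of the decreasing part up to a null correction. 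Granting the normal form (which is compatible with the tensor-degree filtration), an element $\sum_k\iota(b_k)$ with $b_k\in\mcL$ has normal form $\sum_k b_k$, a $C$-combination of the single letters $x^i$; if it lies in $U_{\preceq}(\mcL)_0$, matching the degree-one part and invoking LZS to rule out any cancellation against genuinely higher-degree normal-form pieces forces $\sum_k b_k$ into the span of $\{x^i : i\in J_0\}$, i.e. $\sum_k b_k\in\mcL_0$, which is the asserted $\mcL_0$-injectivity.
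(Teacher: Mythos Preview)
Your treatment of the structural claims and of the universal property is considerably more thorough than the paper's; the paper dispatches part~(1) in two sentences, essentially just observing that the defining relation $x^jx^i+y_{j,i}=x^ix^j+[x^jx^i]$ yields $x^jx^i\preceq_0 x^ix^j+[x^jx^i]$ and that this extends through the congruence.  Your elaboration there is fine.

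The substantive divergence is in part~(2).  The paper's argument for $\mcL_0$-injectivity is a one-line degree count: both sides of every generating relation carry a monomial of tensor-degree $\ge 2$ in the $x^i$, so under LZS (no cancellation in the free semialgebra) a degree-$1$ element $a\in\mcL$ can never equal either side of any instance of a relation, and hence its congruence class is the singleton $\{a\}$.  Then $\iota(a)\in U_\preceq(\mcL)_0$ forces $a$ to lie in the degree-$1$ part of the ideal generated by $W_0\cup\mcL_0$, which is $\mcL_0$.  No normal form is used.

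Your Bergman route is both more elaborate and not quite sound here.  The diamond lemma as usually formulated presupposes subtraction (one cancels a common leading monomial), and your adaptation --- ``rewrite $x^jx^i$ as $x^ix^j+[x^jx^i]$ at the cost of an added $y_{j,i}$'' --- is not a reduction compatible with the congruence: the relation only allows you to exchange the packet $x^jx^i+y_{j,i}$ for $x^ix^j+[x^jx^i]$, not to peel off $x^jx^i$ alone.  Your overlap resolution, moreover, only shows the two reductions of $x^kx^jx^i$ agree modulo $\mcL_0$ plus the $W_0$-ideal, which does not give uniqueness of the decreasing-monomial part of a putative normal form.  In the end you invoke a degree comparison (``matching the degree-one part and invoking LZS'') anyway --- and that step alone, carried out directly on the free semialgebra as the paper does, already delivers $\mcL_0$-injectivity without any of the normal-form machinery.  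Drop the Bergman scaffolding and keep your last paragraph.
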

\begin{proof}
First we note that $\iota$ is a $\preceq$-homomorphism. By definition
\[
x^ix^j + y_{j,i} = x^j x^i +y_{j,i} \succeq x^j x^i.
\]
This extends to the congruence.

To prove that $\iota$  is ${\mcL}_0$-injective  when $\mcL$  satisfies LZS, we simply note that all the relations have degree $\ge 2$ in the $x^i$, so they intersect trivially with $\mcL_0.$
\end{proof}

\begin{remark}
    What can be said when $\mcL$ does not satisfy LZS? Since the Jordan algebraic version of the PBW fails, we must
 deal with the ambiguities using the Lie product. Any  ambiguity involves rearranging   sequences of $x^i$ into ascending sequences. But the parts of $T(V)_>$ match and these are stipulated to be canceled, so we are left with relations in $\mcA_0;$ for example for $i<j<k$ one  considers $x^k (x^j x^i) $ versus $(x^ k x^j)x^i $, which is resolved by rearranging them and  canceling $x^i x^j x^k:$
\begin{eqnarray}
x^k (x^j x^i) &=& x^k (x^i x^j + [x^jx^i] +y_{j,i}) =  x^i x^k x^j + y_{k,i}x^j +x^k y_{j,i} +x^k  [x^jx^i]  \cr  &=& x^ix^jx^k +(x^i y_{k,j}  + y_{k,i}x_j +x^k y_{j,i} +x^k  [x^jx^i]).
\end{eqnarray}

Canceling out $x^j x^i x^k $  yields a relation holding in any classical Lie algebra, so we need some further cancellative  property to be in a position to apply the classical PBW theorem.
\end{remark}

   \subsubsection{The $\vep$-version of PBW}

Suppose that $(\mcL,\mcL_0)$ is a $\vep$-Lie pair, with $\vep\in C. $\footnote{We could work more generally with a pre-negation map $\psi$ on $\mcL$  if we mod $T(\mcL)$ by the congruence generated by   $(\psi(x)\otimes y,x \otimes \psi(y))$ for all $x,y\in \mcL$.}
Recall that
$
[x,y]_\epsilon=xy+\epsilon yx.$
Then
\be
[x,y]_\epsilon+[y,x]_\epsilon\in T(\Lcal_0):=(1+\epsilon)T(\Lcal) \label{eq:ep2}
\ee

$$
xy+yx(1+\epsilon)=xy+\epsilon yx+yx=yx+[x,y]_\epsilon\quad \mbox{for all} \ x,y\in T(\Lcal)
$$
i.e.
\be
xy\preceq_0  yx+[x,y]_\epsilon\label{eq.preceq}\quad \mbox{for all} \ x,y\in T(\Lcal)
\ee
There is a natural injection $\iota: (\Lcal;\Lcal_0)\lra (T(\Lcal),T(\Lcal_0))$.

Define now $\Ucal_\vep(\Lcal)$ to be  $T(\Lcal)$ modulo the congruence $\Cong$
 generated by $$([x_ix_j]_\vep , x_ix_j + \vep x_jx_i)$$ for   elements in $\Lcal=T^1(\Lcal)$. In other words, if $x_i,x_j\in  \Lcal$ then $\iota(x_i)\iota(x_j)+\vep \iota(x_j)\iota(x_i)$ is identified with    the $\iota$ image of $\epsilon[x_ix_j]\in \Lcal$ in $T^1(\Lcal)\subseteq T(\Lcal)$. Similarly let $\Ucal(\Lcal_0)=(1+\epsilon)\Ucal$.

\begin{theorem}\label{PBW3} $ $\begin{enumerate}
     \item  $\Ucal_{\vep}(\mcL;\mcL_0) := (\Ucal_{\vep}(\mcL),\Ucal_{\vep}(\mcL)_0)$ defines a $\vep$-Lie pair, which is strong when $(\mcL;\mcL_0)$ is a strong $\vep$-Lie pair.

 \item There is a universal $\vep$-Lie homorphism $\varphi:\Ucal_{\vep}(\mcL, \mcL_0)\to (\tilde {\mcL}, \tilde {\mcL}_0)$ given by $ x^i \mapsto \bar x^i.$

\item Let $(\Acal,\Acal_0)$ be any associative $(\Ccal;\Ccal_0)$-semiring pair, and let $f:(\Lcal,\Lcal_0)\to (\Acal, \Acal_0)$ be any map such that

 \begin{equation}\label{fix}
     f(x)f(y)+\vep f(y)f(x)=f([x,y]).
 \end{equation}
 Then there is a unique homomorphism $\psi_f:(\Ucal_\vep(\Lcal),\Ucal_\vep(\Lcal_0))\to (\Acal, \Acal_0)$ such that $f=\psi_f\circ \iota$.
\end{enumerate}
\end{theorem}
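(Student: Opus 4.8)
The plan is to reduce part (1) to Theorem~\ref{pLie} and to read parts (2) and (3) off the universal property of the tensor semialgebra. Put $\psi\colon\Ucal_\vep(\Lcal)\to\Ucal_\vep(\Lcal)$ equal to multiplication by $\vep$. Since $\Ucal_\vep(\Lcal)=T(\Lcal)/\Cong$ is a $\Ccal$-semialgebra and $\Ucal_\vep(\Lcal_0)=(1+\vep)\Ucal_\vep(\Lcal)$ is an ideal, $(\Ucal_\vep(\Lcal),\Ucal_\vep(\Lcal_0))$ is an $\mcA_0$-additive semiring pair on which $\psi$ is a pre-negation map, exactly as in Example~\ref{vep}. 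Hence Theorem~\ref{pLie} applies: $(\Ucal_\vep(\Lcal),\Ucal_\vep(\Lcal_0))_\psi$, with bracket $[ab]_\psi=ab+\vep ba$, is a $\psi$-Lie pair. It is moreover an $\vep$-Lie pair in the sense of \S\ref{vepp}, because $a+\vep a=(1+\vep)a\in\Ucal_\vep(\Lcal_0)$ and $[ab]_\psi+[ba]_\psi=(1+\vep)(ab+ba)\in(1+\vep)\Ucal_\vep(\Lcal)$. This gives (1); for the \emph{strong} clause one notes that, when $(\mcL,\mcL_0)$ is strong, $[yx]_\vep=\vep[xy]_\vep$ already holds in $\mcL$, and this relation is preserved by $\iota$ and by the defining congruence, so the last line of the proof of Theorem~\ref{pLie} carries over verbatim.

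For part (2), the composite $\iota\colon\mcL=T^1(\Lcal)\hookrightarrow T(\Lcal)\twoheadrightarrow\Ucal_\vep(\Lcal)$ is $\Ccal$-linear and sends $\mcL_0$ into $\Ucal_\vep(\Lcal_0)$ (using $(1+\vep)\mcL\subseteq\mcL_0$ together with the defining relations). By the very definition of $\Cong$, $\iota([x_ix_j]_\vep)=\iota(x_i)\iota(x_j)+\vep\,\iota(x_j)\iota(x_i)=[\iota(x_i)\iota(x_j)]_\psi$, and by $\Ccal$-bilinearity of both sides this extends to $\iota([xy]_\vep)=[\iota(x)\iota(y)]_\psi$ for all $x,y\in\mcL$. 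Thus $\iota$ is a $\vep$-Lie homomorphism into $(\Ucal_\vep(\Lcal),\Ucal_\vep(\Lcal_0))$, and the map $\varphi$ in the statement is the one determined on generators by $x^i\mapsto\bar x^i=\iota(x^i)$.

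Part (3) carries the real content. Given a map of pairs $f\colon(\Lcal,\Lcal_0)\to(\Acal,\Acal_0)$ satisfying \eqref{fix}, regard $T(\Lcal)$ as the free associative $\Ccal$-semialgebra on a basis of $\Lcal$; then the $\Ccal$-linear map $f$ extends uniquely to a semialgebra homomorphism $\tilde f\colon T(\Lcal)\to\Acal$. The generating pair $([x_ix_j]_\vep,\ x_ix_j+\vep x_jx_i)$ of $\Cong$ is sent by $\tilde f$ to $\bigl(f([x_i,x_j]_\vep),\ f(x_i)f(x_j)+\vep f(x_j)f(x_i)\bigr)$, whose two entries coincide by \eqref{fix}; hence $\tilde f$ annihilates $\Cong$ and factors as $\tilde f=\psi_f\circ\pi$ through $\Ucal_\vep(\Lcal)=T(\Lcal)/\Cong$, yielding a semialgebra homomorphism $\psi_f$ with $\psi_f\circ\iota=f$. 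Since $1+\vep\in\Ccal_0$ and $(\Acal,\Acal_0)$ is a $(\Ccal,\Ccal_0)$-pair, $\psi_f(\Ucal_\vep(\Lcal_0))=\psi_f\bigl((1+\vep)\Ucal_\vep(\Lcal)\bigr)\subseteq(1+\vep)\Acal\subseteq\Acal_0$, so $\psi_f$ is a homomorphism of pairs; uniqueness is immediate because $\iota(\Lcal)$ generates $\Ucal_\vep(\Lcal)$ as a $\Ccal$-semialgebra.

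The step I expect to be the main obstacle is not any single calculation but making the congruence bookkeeping airtight: one must check that $\Cong$ is compatible with the $\Ccal$-semialgebra and pair structure, so that $\Ucal_\vep(\Lcal_0)=(1+\vep)\Ucal_\vep(\Lcal)$ really is the right null part and $\iota(\mcL_0)$ lands in it, and that passing to the quotient — which deliberately identifies the degree-$1$ element $\iota([x_ix_j]_\vep)$ with the degree-$2$ element $x_ix_j+\vep x_jx_i$, and so destroys the grading of $T(\Lcal)$ — does not collapse anything needed, in particular for the strong assertion, where that same identification is precisely what forces $\vep^2$ to act trivially on the relevant products. Note that, no LZS hypothesis being in force here, no injectivity of $\iota$ is claimed, which removes the delicate point present in Theorems~\ref{PBW0} and \ref{thm:PBW}.
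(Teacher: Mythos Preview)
Your proposal is correct and follows essentially the same approach as the paper: the paper's proof is extremely terse, simply citing Theorems~\ref{PBW0}, \ref{thm:PBW}, and \ref{pLie} for parts (1)--(2), and for (3) giving the same two-step argument (extend $f$ to $T(\Lcal)$, then factor through $\Cong$ via \eqref{fix}, noting $(1+\vep)\Acal\subseteq\Acal_0$). You have simply unpacked the details the paper elides, including a more explicit treatment of the strong clause and the null-part containment, and your closing remarks about the congruence bookkeeping and the absence of an injectivity claim (no LZS hypothesis) accurately identify where the delicate points lie.
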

\begin{proof} (1) and (2) are as in the proofs of Theorems \ref{PBW0} and \ref{thm:PBW}, using Theorem~\ref{pLie}.

(3) We define the map $T(\Lcal)\to \Acal$ given by $x_i\mapsto f(x_i)$. By \eqref{fix} this map factors through $\Cong,$ yielding the desired homomorphism $\psi_f:U(\Lcal)\to \mcA.$
Moreover we also have $\psi_f(U(\Lcal)) \subseteq (\one+\vep) \mcA =  \mcA_0.$
\end{proof}
\medskip
\noindent
{\bf Description of $\Ucal_\vep(\Lcal)$}. As a $\Ccal$-module, $\Ucal_\vep(\Lcal)$ is spanned by finite linear combinations of monomials $x_1^{i_1}\cdots x_k^{i_k}$, using \eqref{fix} to reduce whenever possible.

We can obtain surpassing reductions, as in the following example:
\begin{example}$ $
\begin{enumerate}
\item Let $x,y,z,\in \Lcal$. Then
\begin{align*}
zxy&\preceq (xz+\iota([zx]))y=xzy+\iota([zx])y\\
&\preceq x(yz+(\iota[z,y])+\iota([zx])y=xyz+x\iota([zx])+\iota([zx])y
\end{align*}
In other words
$$
zxy\preceq xyz+x\iota([zx])+\iota([zx])y.
$$
Notice that the right hand side  only involves product of brackets in $\Lcal$ and of product of $x$, $y$, $z$ in alphabetical order. We can re-arrange the factors, paying the price of adding elements of lower degree.
However, we may have extra terms of degree 1, so we may not have an injection.
\end{enumerate}
\end{example}

\begin{remark}
    The identity map on $\Lcal$ induces a weak Lie morphism from  $ U_{\wk}(\mcL)=T(V)$ to  $\Ucal_\vep(\Lcal),$ extending the identity on $\Lcal.$
\end{remark}

\section*{Acknowledgments}

The authors thank the referee for careful readings, and for sound advice on improving the presentation. The first author was supported partially by INDAM-GNSAGA, PRIN Multilinear Algebraic Geometry, and RIB23GATLET. The second author was supported by the Israel Science Foundation grant 1994/20 and the Anshel Pfeffer Chair.


\EditInfo{October 13, 2023}{December 11, 2023}{Ivan Kaygorodov, Adam Chapman, Mohamed Elhamdadi}

\end{document}